\numberwithin{equation}{section}
\declaretheorem[thmbox=M,name=Theorem,numberwithin=section]{theo}
\declaretheorem[name=Proposition,thmbox=M,numberwithin=section]{prop}
\declaretheorem[name=Lemma,thmbox=S,numberwithin=section]{lem}
\declaretheorem[name=Corollary,thmbox=M,numberwithin=section]{cor}
\theoremstyle{definition}
\newtheorem{rem}{Remark}[section]
\theoremstyle{plain}
\renewenvironment{proof}[1][\proofname]{\par
	\pushQED{\qed}%
	\normalfont \topsep6\p@\@plus6\p@\relax
	\trivlist
	\item[\hskip\labelsep
	\sffamily\bfseries
	#1\@addpunct{.}]\ignorespaces
}{%
	\popQED\endtrivlist\@endpefalse
}
\newcommand{\jump}{{\vskip 0.3cm \noindent }}
\newcommand{\for}{\mbox{ for }}
\renewcommand{\and}{\mbox{ and }}
\newcommand{\N}{\mathbb{N}}
\newcommand{\R}{\mathbb{R}}
\newcommand{\Sn}{\mathbb{S}^{N-1}}
\newcommand{\dd}{\mathrm d}
\newcommand{\Ind}[1]{\1_{(#1)}}
\newcommand{\polclass}[1]{\ensuremath{\mathrm{#1}}}
\newcommand{\He}{\polclass{He}}
\newcommand{\Hek}{\polclass{He}_k}
\newcommand{\op}{\polclass{P}}
\newcommand{\opk}{\polclass{P}_k}
\newcommand{\E}{\mathbb{E}}
\renewcommand{\P}{\mathbb{P}}
\def\as{{ \mathrm{a.s.}  }}
\def\ed{\stackrel{{\mathcal{D}}}{=}}
\def\iid{\stackrel{{\mathrm{i.i.d}}}{\sim}}
\newcommand{\<}{\ensuremath{ \langle }}
\renewcommand{\>}{\ensuremath{ \rangle }}
\newcommand{\vect}[1]{\ensuremath{\boldsymbol{\mathbf{#1}}}}
\newcommand{\rdmvect}[1]{\ensuremath{\bm{#1}}}
\newcommand{\mat}[1]{\ensuremath{\boldsymbol{\mathbf{#1}}}}
\newcommand{\rdmmat}[1]{\ensuremath{\bm{#1}}}
\newcommand{\diag}{\ensuremath{\mat{D}}}
\newcommand{\Tr}{\ensuremath{\mathrm{Tr}}}
\newcommand{\rank}{\ensuremath{\mathrm{rank}}}
\newcommand{\norm}[1]{\ensuremath{ \|#1 \|}}
\newcommand{\normop}[1]{\ensuremath{ \|#1 \|_{\mathrm{op}} }}
\newcommand{\normf}[1]{\ensuremath{ \|#1 \|_{\mathrm{F}} }}
\def\ks{{k_{\star}}}
\def\Yf{{ \rdmmat{Y}^{(f)}  }}
\def\coeffk{{\vartheta_k(f)}}
\def\varfZ{{\vartheta_0(f^2) - \vartheta_0(f)^2 }}
\def\coeffkn{{\vartheta_k(f_t)}}
\def\xk{{\vect{x}^k}}
\newcommand{\pert}{\mathrm{pert}}
\def\fm{{f_M}}
\def\fn{{f_t}}
\def\fdm{{f_{\delta,M}}}
\def\fnp{{f_{t}^{\pert}}}
\def\fnpt{{\Tilde{f}_{t}^{\pert} }}
\def\Ytf{{\rdmmat{Y}_{L}^{(f_L)} }}
\def\Ytfnp{{\rdmmat{Y}_{L}^{(\fnp)} }}
\def\wkZ{{w^{(k)}_Z}}
\def\coeffkfdm{{\vartheta_k(\fdm)}}
\def\coeffkfnp{{\vartheta_k(\fnp)}}
\def\coeffksfnp{{\vartheta_{\ks}(\fnp)}}
\def\Lpdx{{L^p(\dd x)}}
\def\Ltwodx{{L^2(\dd x)}}
\def\Lfourdx{{L^4(\dd x)}}
\def\LoneZ{{L^1(\mu_Z)}}
\def\LtwoZ{{L^2(\mu_Z)}}
\def\LfourZ{{L^4(\mu_Z)}}
\DeclareMathOperator{\1}{\mathbbm{1}}
\renewcommand{\P}{\mathbb{P}}
\renewcommand{\leq}{\leqslant}
\renewcommand{\geq}{\geqslant}
\renewcommand{\epsilon}{\varepsilon}
\def\supp{{\mbox{supp}}}
\def\pP{{\mathbb P}}
\def\R{{\mathbb R}}
\def\rank{{\rm rank}}
\begin{document}

\title{Spectral Phase Transitions in Non-Linear 
 Wigner Spiked Models}

\author{Alice Guionnet\thanks{UMPA, ENS Lyon and CNRS, France. Email: \texttt{aguionnet@ens-lyon.fr}}
	, 
	Justin Ko\thanks{UMPA, ENS Lyon and CNRS, France and Department of Statistics and Actuarial Science, University of Waterloo, Canada. Email: \texttt{justin.ko@uwaterloo.ca}},
	Florent Krzakala\thanks{IdePHICS laboratory, \'Ecole F\'ed\'erale Polytechnique de Lausanne, Switzerland. Email: \texttt{florent.krzakala@epfl.ch}}
	,
	Pierre Mergny\thanks{IdePHICS laboratory, \'Ecole F\'ed\'erale Polytechnique de Lausanne, Switzerland. Email: \texttt{pierre.mergny@epfl.ch}}
	,
	Lenka Zdeborov\'a\thanks{SPOC laboratory,  \'Ecole F\'ed\'erale Polytechnique de Lausanne, Switzerland. Email: \texttt{lenka.zdeborova@epfl.ch}} }

\date{}

\maketitle
\begin{abstract}
We study the asymptotic behavior of the spectrum of a random matrix where a non-linearity is applied entry-wise to a Wigner matrix perturbed by a rank-one spike with independent and identically distributed entries. In this setting, we show that when the signal-to-noise ratio scale as $N^{\frac 12 (1-1/\ks)}$, where $\ks$ is the first non-zero generalized information coefficient of the function, the non-linear spike model effectively behaves as an equivalent spiked Wigner matrix, where the former spike before the non-linearity is now raised to a power $\ks$. This allows us to study the phase transition of the leading eigenvalues, generalizing part of the work of Baik, Ben Arous and Pech\'e to these non-linear models. 
\end{abstract}
\tableofcontents


\newpage 

\section{Introduction and Notations}
\label{sec:intro}

\subsection{Introduction}%
\label{sec:introRMT-BBP-NLM}
Random Matrix Theory (RMT)\cite{AGZ,mehta2004random,potters2020first,tao2023topics} has its roots in Wishart's 1928 statistical investigations \cite{Wishart1928} and Wigner's 1950s work on nuclear models \cite{Wigner1958OnTD}. Since then, its influence has spread to a variety of fields, including high-energy physics \cite{itzykson_planar_2008,Verbaarschot72}, spin glass models \cite{marinari_replica_1994,auffinger_random_2013}, and number theory \cite{keating_random_2000}. At its core, RMT delves into the intricate high-dimensional and spectral nuances of select random matrices. Notably, seminal RMT insights determined that the spectrum of matrices first explored by Wishart and Wigner aligns with the semi-circular law and the Mar\v{c}enko-Pastur distribution, respectively. 

While Wigner and Wishart matrices can be considered as pure noise random matrices, one of the most studied models beyond these two examples is the  \emph{spiked models}, in which a fixed rank matrix, playing the role of a signal, is added to the former Wigner or Wishart matrix. In its simplest form, for example, when one studies the rank-one perturbation of a Wigner matrix (see Sec.~\ref{sec:usualBBP} for the quantitative details), one can show \cite{BBP,paul_asymptotics_2007,baik_eigenvalues_2006,renfrew13,Pch2005} that in the high-dimensional regime, the highest eigenvalue in absolute value $\lambda_1$ of this new random matrix undergoes a phase transition, depending on the strength $\gamma$ of the rank-one perturbation, from a regime where it sticks to the edge of the limiting spectral distribution, to the one where it pops out of the bulk. Similarly, the associated eigenvector behaves as a random vector uniformly sampled on the sphere before the transition and becomes partially aligned with the vector of the rank-one perturbation after the transition. 

This phenomenon is now commonly referred to as the `BBP phase transition' after the names of the three authors \cite{ BBP} who studied fluctuations around the deterministic limit for this type of model. This BBP phase transition phenomenon has been generalized and applied to a variety of different settings; in particular, we refer the reader to \cite{Capitaine12fluctuations,florent_benaych-georges_fluctuations_2011,bloemendal_limits_2013} for other studies of the fluctuations in similar spike models, to \cite{Maida2007LDP,biroli2019LDP} for the study of the large deviation of the top eigenvalue and eigenvector whenever the noise is Gaussian, to \cite{benaych-georges_eigenvalues_2011,benaych-georges_singular_2012} for finite-rank perturbation of unitary invariant matrices and to \cite{lesieur2015mmse,dia2016mutual,krzakala2016mutual,lesieur2016phase,lelarge2017fundamental,lenkarmt,banks2018information} for the study of these spike models from the information-theoretic point of view.

A second family of models that has been extensively studied in the past decade due to its connection with high-dimensional statistical problems when both the number of samples and the dimension diverge is the family of \emph{ nonlinear (random matrix) models}, where one study the property of an \emph{entry-wise} non-linearity function $f(\cdot)$ applied to a given random matrix, and we refer in particular to the works \cite{ElKaroui10Kernel,ElKarouiSpikeKernel,Benaych16Gram,romain_couillet_kernel_2016,goldt2022gaussian} for their relation to kernel methods and to \cite{pennington_nonlinear_2017,Peche19PenningtonWorah,mei2022generalization,gerace2020generalisation,loureiro2021learning,piccolo21,Louart18RMTtoNN,OptimalityPCA}
for the relation to the spectrum of one-layer neural network at initialization (or random features).

This paper aims to consider a mixture of these two important families of models by establishing the spectral properties of non-linear matrix models applied to rank-one perturbation of Wigner matrices
\begin{align}
    Y^{(f)}_{ij} 
    &:=
     \frac{1}{\sqrt{N}} \bigg[  f \left( Z_{ij} + \frac{\gamma}{\sqrt{N}} x_i x_j \right) - \E_Z f(Z) \bigg] 
    , &&\label{eq:setting}
\end{align}
for ‘arbitrary' non-linearity functions $f$, and ‘arbitrary' distributions of $Z_{ij}$ and $\vect{x} = (x_1,\dots, x_N)$.
Studying the spectrum of these matrices, we will see that the most interesting regime is when 
 the \emph{signal-to-noise ratio} (SNR) $\gamma$ scaled with the system size $N$. 

Matrices of the form (\ref{eq:setting}) are also important in theoretical machine learning. They can be seen as kernel methods applied to spiked models, but they also appear in the studies of gradient descent in deep neural networks where $Z$ corresponds to the pre-activations at initialization, $f$ is the non-linear activation function, and the low-rank perturbation originates from the dynamics of the early steps of training with gradient descent such as in \cite{ba2022high,damian2022neural,dandi2023learning}. 

A special case of the problem we study was considered by \cite{9517881}, who studied the information-theoretic aspects of recovering a low-rank signal $x$ from unsigned observations of the matrix, corresponding to $f(x) = |x|$, but they have not studied the spectrum of such matrices. 

Our main goal is to study the asymptotic behavior of the leading eigenvalue of the matrix $Y^{(f)}$, namely the largest eigenvalue in absolute value
of the matrix $Y^{(f)}$: depending on the sign of $\gamma$ and $f$, it can be the smallest or the largest eigenvalue of $Y^{(f)}$. 

\subsection{Motivating Examples}
\label{sec:examples}
As an empirical motivation of the non-trivial behavior one can obtain in this setting, let's consider $x_i$ and $Z_{ij}$  standard independent Gaussian random variables and construct the two $(N \times N)$ symmetric matrices $\rdmmat{Y}^{(f_1)} := {(Y^{(f_1)}_{ij})}_{1\leq i,j \leq N}$  and $\rdmmat{Y}^{(f_2)} :={(Y^{(f_2)}_{ij})}_{1\leq i,j \leq N}$ with
\begin{align}
\label{eq:heuristic_ex}
    Y^{(f_a)}_{ij} 
    &:=
     \frac{1}{\sqrt{N}} \bigg[  f_a \left( Z_{ij} + \frac{\gamma}{\sqrt{N}} x_i x_j \right) - \E_Z f_a(Z) \bigg] 
     \quad
     \for 
     a=1,2
     \, , &&
\end{align}
with $f_1(x) := |x|$ and $f_2(x) :=x^3- 3 x$ (one has $\E_Z f_1(Z) = \sqrt{2/\pi}$ and $\E_Z f_2(Z) = 0$ in these cases). In the leftmost plot of Fig.~\ref{fig:abs_rescaled} and Fig.~\ref{fig:He3_rescaled}, we have plotted the empirical position of the top eigenvalue $\lambda_1$ of each matrix, as a function of $\gamma$, for different values of the size $N$. Unlike the well-known result for the usual ‘linear' setting ($f(x):=x$), as one increases the dimension $N$, one observes a \emph{shift towards the right} of the curves. This empirical phenomenon suggests rescaling the constant $\gamma \equiv \gamma(N)$ with $N$ to get a deterministic limit in the large $N$ limit. An empirical fit further suggests a scaling of the form $\gamma = N^{\alpha}$ with $\alpha \approx 0.25$ for  $\rdmmat{Y}^{(f_1)}$ and $\alpha \approx 0.33$ for  $\rdmmat{Y}^{(f_2)}$ and a natural question is to give a theoretical explanation for these scaling.

\begin{figure}[t]
    \centering
    \includegraphics[width=0.325\linewidth]{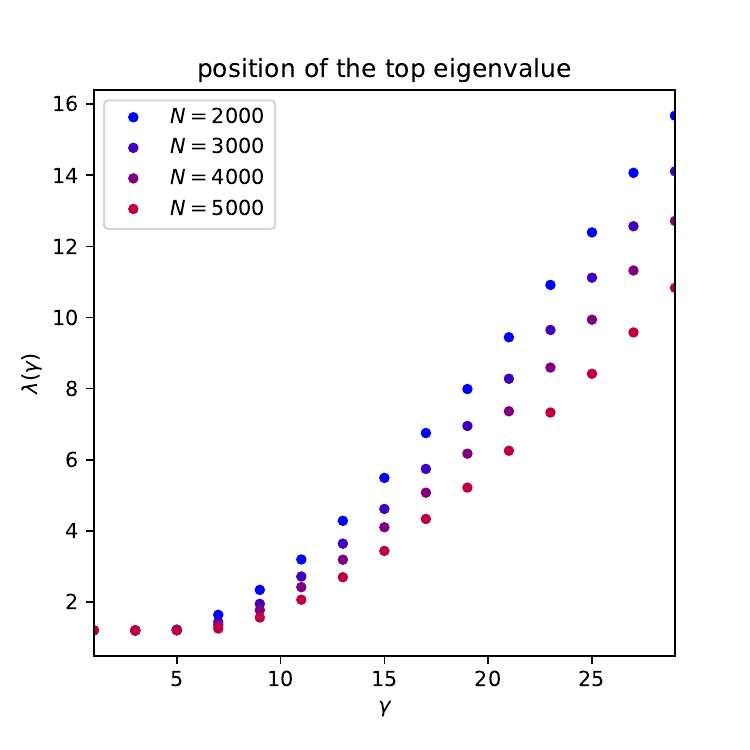}
    \includegraphics[width=0.325\linewidth]{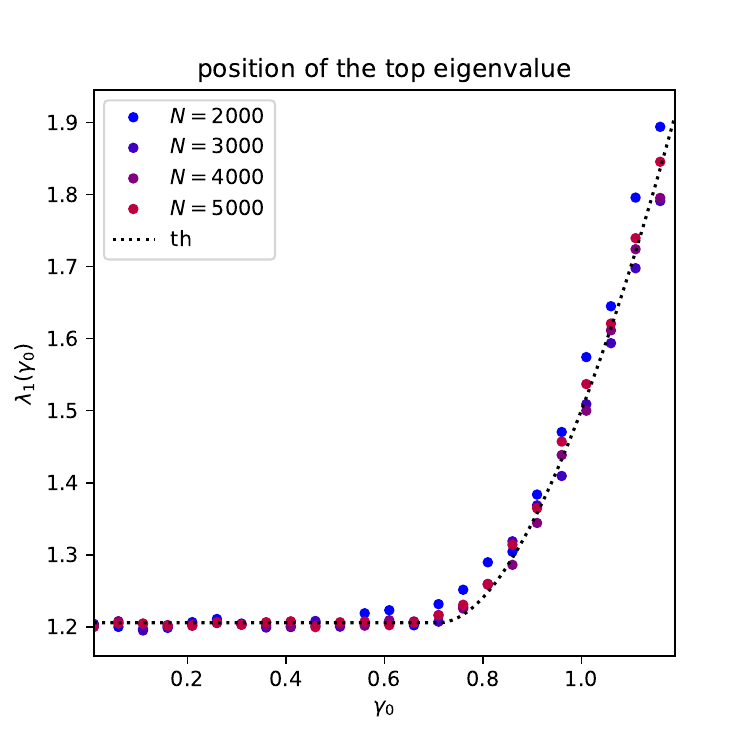}
    \includegraphics[width=0.325\linewidth]{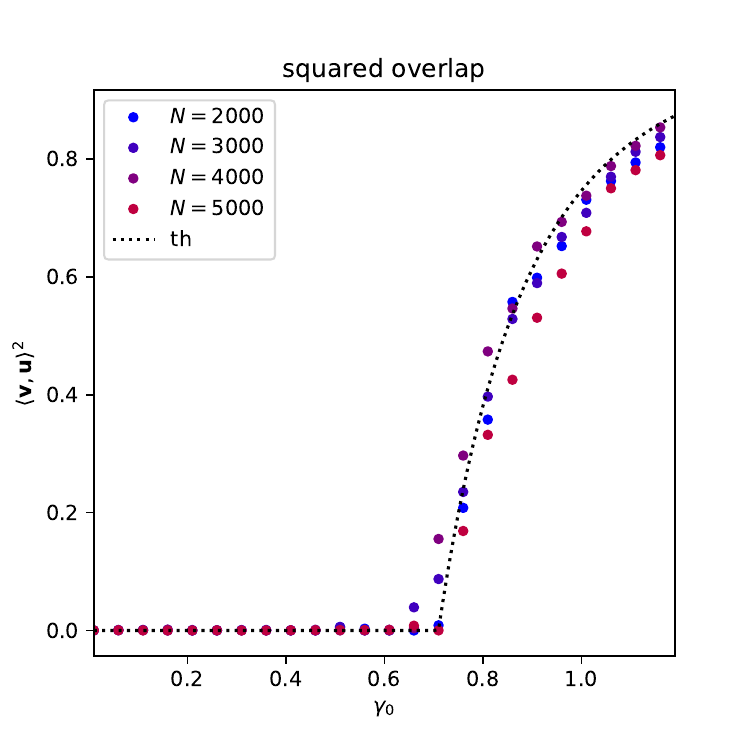}
      \caption{Illustration of the main results for the matrix model defined by Eq.~\eqref{eq:heuristic_ex} with non-linearity $f_1(x)=|x|$ and Gaussian noise for which the information index is $\ks=2$. \textbf{(Left)} The position of the leading eigenvalue as a function of the SNR $\gamma$ for different matrix sizes $N$. As $N$ increases, one observes a shift of the curves towards the right, suggesting that one should rescale the SNR with $N$. \textbf{(Middle)} The same data plotted against the rescaled SNR $\gamma_0 = \gamma N^{-1/4}$ given by Eq.~\eqref{eq:nontrivialscaling_intro}. The dotted black curves represent the theoretical values given by a BBP-like theorem Eq.~\eqref{eq:BBPNL:leadingeig_intro}. \textbf{(Right)} 
       The squared overlap of the leading eigenvector with the vector $ \vect{u} \equiv  \vect{x}^{\ks}/\| \vect{x}^{\ks}\|$ where $\vect{x}$ is the signal vector as a function of the \emph{rescaled} SNR $\gamma_0$ given by Eq.~\eqref{eq:nontrivialscaling_intro} and different sizes $N$. The dotted black curves represent the theoretical values given by a BBP-like theorem  Eq.~\eqref{eq:BBPNL:overlap_intro}. }
    \label{fig:abs_rescaled}
\end{figure}

\begin{figure}[t]
    \centering
    \includegraphics[width=0.32\linewidth]{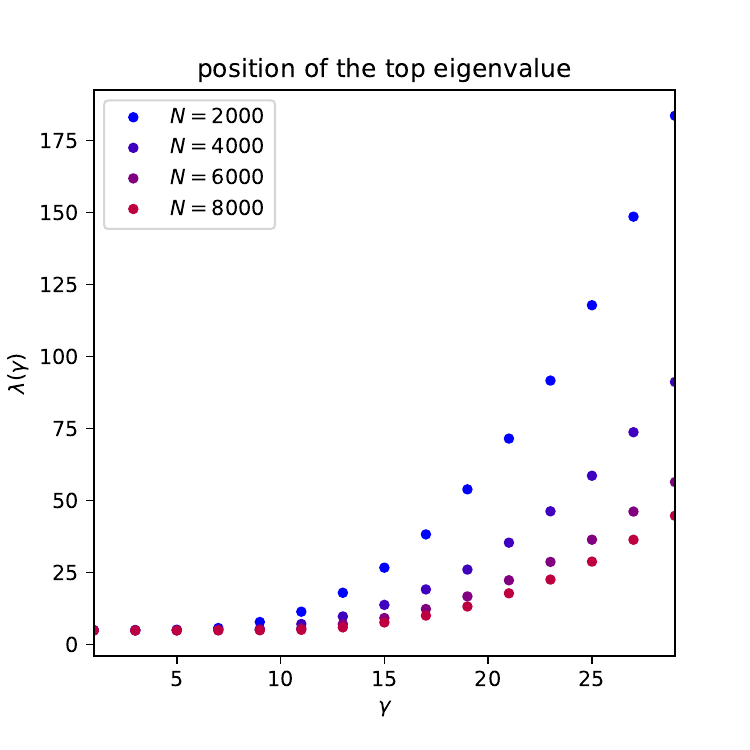}
    \includegraphics[width=0.32\linewidth]{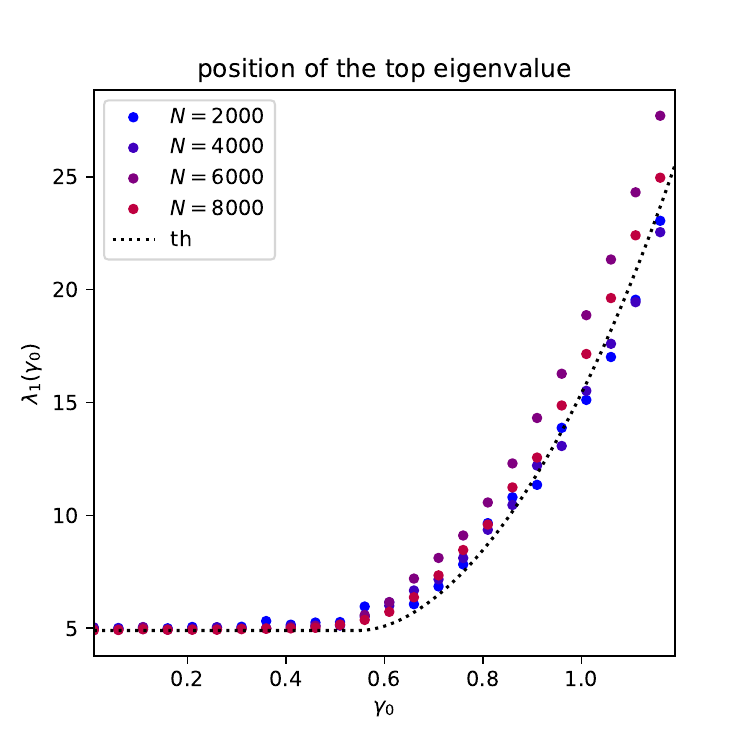}
    \includegraphics[width=0.32\linewidth]{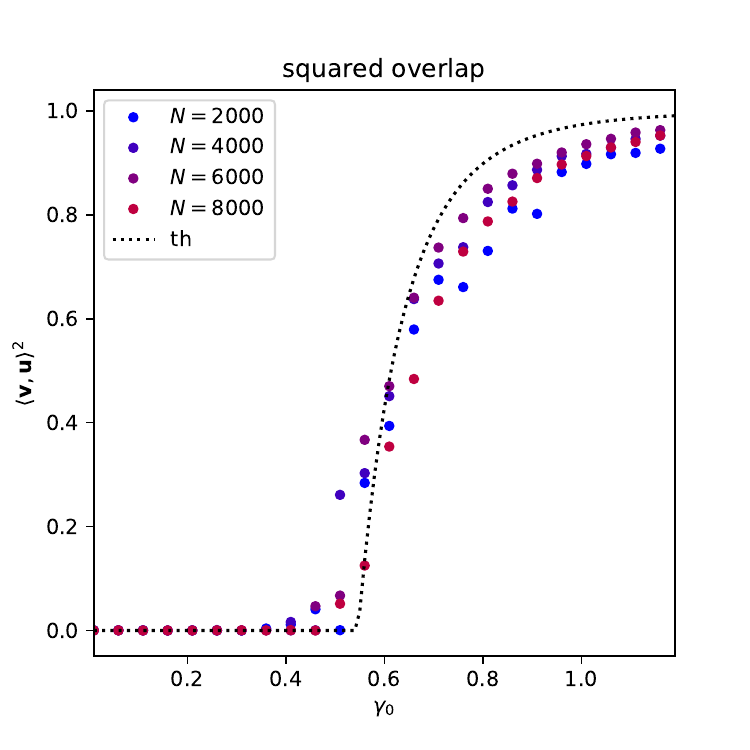}
       \caption{The same as Fig.~\ref{fig:abs_rescaled} for the non-linearity $f_2(x)=x^3 - 3x$ and Gaussian noise for which the information index $\ks=3$ leading to a rescaling of the SNR $\gamma_0 = \gamma N^{-1/3}$.      
       }
    \label{fig:He3_rescaled}
\end{figure}

\subsection{Summary of the Main Results}
\label{sec:maincontributions}

We now give a broad overview of the main results obtained in this work and explain quantitatively the behavior of leading eigenvalues and eigenvectors observed in the motivating examples in a more general framework. A careful statement of these main results with precise conditions on the assumptions will be stated in Section~\ref{sec:MainResults}. Motivated by the examples above, we seek to understand what happens to the spectrum of a matrix
\begin{align}
    Y^{(f)}_{ij} 
    &:=
     \frac{1}{\sqrt{N}} \bigg[  f \left( Z_{ij} + \frac{\gamma}{\sqrt{N}} x_i x_j \right) - \E_Z f(Z) \bigg] 
    , &&
\end{align}
for ‘arbitrary' non-linearity functions $f$, and ‘arbitrary' distributions of $Z_{ij}$ and $\vect{x} = (x_1,\dots, x_N)$. For the sake of simplicity, suppose that the law of $Z_{ij}$ has a smooth density function $w_Z$ that vanishes at infinity, and that the entries of $\vect{x}$ are independent with exponential decay. Both of these assumptions are weakened and stated in more generality in Section~\ref{sec:MainResults}, but the statements of the following results become slightly less explicit without densities. 

The relevant scaling for the SNR $\gamma$ at which the leading eigenvalue is of order one depends on certain statistics that build on the non-linearity $f$ and the law of the noise $w_Z$. We define the  \emph{$k$-th information coefficient} associated with the non-linearity $f$ and density $w_Z$ of $Z$ by
\begin{align}
    \label{eq:informationcoeff_smoothdensity_intro}
    \vartheta_k(f) &:= (-1)^{k} \int_{\mathbb{R}} f(x) w_Z^{(k)}(x) \dd x \, . &&
\end{align}
where $w_Z^{(k)}$ is the $k$-th derivative of the density. 
We then define the associated \emph{information index} $\ks$ by the first non-zero component of the information coefficient as 
\begin{align}
	\label{eq:def_kstar_intro}
	\ks 
    &:=
    \inf \left\{ 1 \leq k \leq k_0  \, \mbox{ such that } \,  \coeffk \neq 0 \right\} 
    \, . &&
 \end{align}

Whenever the entries $Z_{ij}$ are standard Gaussian random variables, the generalized information coefficients $\vartheta_k(f)$ appearing in this paper reduce to the classical \emph{Hermite coefficients}, that is, the scalar product of $f$ with the $k$-th Hermite polynomial in the $L^2$ space weighted by the Gaussian measure. 
These Hermite coefficients appear naturally in various high-dimensional learning problems, for example \cite{arous2021online}, where the associated critical index $\ks$ dictates in a certain manner the \emph{rate} of the learning, in a similar fashion as the relevant scaling of the SNR in this problem, depends on $\ks$. In Ref.~\cite{arous2021online}, these coefficients are called ‘information coefficients' rather than ‘Hermite coefficients', hence the name ‘generalized information coefficients' used here to denote the more general framework where the noise is not necessarily Gaussian. It will be interesting to know if one can reinterpret these results in terms of the spectral properties of a non-linear rank-one perturbation model.

A consequence of the first main result of our paper indicates that the relevant scaling $\alpha$ of the \emph{signal-to-noise ratio} is given by 
\begin{align}
    \gamma(N) = \gamma_0 N^{\frac{1}{2} (1 - \frac{1}{\ks})} \,. &&
\end{align}
Furthermore, under this scaling and for large $N$, we show that the non-linear model studied in this paper behaves, up to a vanishing error term, as the usual ‘Wigner matrix plus rank-one perturbation' model, with a ‘new' rank-one perturbation aligned with the vector $\vect{x}^{\ks}=(x_1^\ks,\dots,x_N^\ks)$.

\begin{theo}[Rank-one Equivalence for the Non-Linear Model (Informal)]\label{thm:rank1_informal}
Suppose that $Z$ has a smooth density $w_Z$ vanishing exponentially fast at infinity and that $\vect{x}$ has independent entries with exponential tails. Assume that $f$ is a measurable function such that $\E [f(Z)^4] < \infty$. Moreover, assume that we are in the scaling regime for the SNR where there exists a positive constant $\gamma_0$ such that
\begin{align}\label{eq:nontrivialscaling_intro}
\gamma(N) = \gamma_0 N^{\frac{1}{2} (1 - \frac{1}{\ks})} \,.&&
\end{align}
Then for all $N \geq 1$, we have
\begin{align}
	\Yf 
    &\ed 
    {\small{\sqrt{\vartheta_0(f^2) - \vartheta_0(f)^2} 
    \,}} \rdmmat{W}+ \gamma_0
    \frac{  \vartheta_{\ks}(f) }{\ks !}\bigg[  \frac{\vect{x}^{\ks}}{\sqrt{N}} \bigg(\frac{ \vect{x}^{\ks}}{\sqrt{N}}\bigg)^\intercal  \bigg] + \rdmmat{E} 
    \, , &&
\end{align}
where $\rdmmat{W}$ is a \emph{centered and normalized Wigner matrix with entries with bounded fourth  moment} and the operator norm of $\rdmmat{E}$ goes to $0$ as $N \to \infty$. Moreover, $\rdmmat{W}$ and $\vect{x}$ are independent. 
\end{theo}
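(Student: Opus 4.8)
The plan is to perform a Taylor-type expansion of the non-linearity applied to the perturbed argument $Z_{ij} + \gamma x_i x_j/\sqrt{N}$ around $Z_{ij}$, carefully extracting the leading rank-one contribution and controlling the remainder in operator norm. The key observation is that for fixed $i,j$ the quantity $\gamma x_i x_j/\sqrt{N} = \gamma_0 N^{-1/(2\ks)} x_i x_j$ is small (of order $N^{-1/(2\ks)}$ up to the tails of $\vect x$), so one expects the decomposition
\begin{align*}
 f\!\left(Z_{ij} + \tfrac{\gamma}{\sqrt N} x_i x_j\right) - \E_Z f(Z)
 = \Big(f(Z_{ij}) - \E_Z f(Z)\Big)
 + \sum_{k=1}^{\ks} \frac{1}{k!}\Big(\tfrac{\gamma}{\sqrt N}x_i x_j\Big)^{k} g_k(Z_{ij})
 + (\text{error}),
\end{align*}
where, integrating by parts against the smooth density $w_Z$, the \emph{average} over $Z_{ij}$ of the coefficient of $(x_i x_j)^k$ produces precisely $\gamma^k N^{-k/2}\vartheta_k(f)/k!$. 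By the definition of the information index, the terms $k=1,\dots,\ks-1$ have vanishing $\vartheta_k(f)$ and will be absorbed into $\rdmmat E$ (their fluctuating parts are lower order because their $Z$-mean is zero and, after dividing by $\sqrt N$, they give Wigner-type matrices with vanishing variance once the $N$-powers are accounted for), while the $k=\ks$ term contributes, after replacing the random coefficient $g_{\ks}(Z_{ij})$ by its mean $\vartheta_{\ks}(f)$, the announced rank-one matrix $\gamma_0 \frac{\vartheta_{\ks}(f)}{\ks!}\,\frac{\vect x^{\ks}}{\sqrt N}\big(\frac{\vect x^{\ks}}{\sqrt N}\big)^{\intercal}$ — note $\gamma^{\ks} N^{-\ks/2} = \gamma_0^{\ks} N^{-1/2}$, which is exactly the Wigner normalization, so this is an order-one rank-one spike.

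The decomposition naturally splits $\Yf$ into four pieces: (i) the ``pure noise'' matrix $\rdmmat W$ with entries $N^{-1/2}(f(Z_{ij}) - \E f(Z))$, which is a centered Wigner matrix whose entry variance is $\vartheta_0(f^2)-\vartheta_0(f)^2$ and whose fourth moment is finite by hypothesis, so $\rdmmat W = \sqrt{\vartheta_0(f^2)-\vartheta_0(f)^2}\,\rdmmat W'$ for a normalized Wigner matrix $\rdmmat W'$; (ii) the deterministic-coefficient rank-one term above; (iii) the sub-leading Taylor terms $k<\ks$ together with the fluctuation of the $k=\ks$ coefficient around its mean, i.e. matrices of the form $N^{-1/2}(\tfrac{\gamma}{\sqrt N}x_i x_j)^k \big(g_k(Z_{ij}) - \E g_k(Z)\big)$ and $N^{-1/2}(\tfrac{\gamma}{\sqrt N}x_i x_j)^{\ks}\big(g_{\ks}(Z_{ij}) - \E g_{\ks}(Z)\big)$; (iv) the Taylor remainder of order $\ks+1$. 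Pieces (iii) and (iv) must be shown to have operator norm $\to 0$, and these together constitute $\rdmmat E$. To handle the non-smooth or merely $L^4$ case one would first regularize $f$ (convolving, or approximating in $L^4(\mu_Z)$) and absorb the approximation error using that an $L^2$ bound on entries controls the Hilbert–Schmidt, hence operator, norm.

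The bounds on (iii) and (iv) are where the real work lies. For a matrix $M$ with entries $M_{ij} = N^{-1/2} c^k x_i^k x_j^k \, h(Z_{ij})$ with $c = \gamma/\sqrt N = \gamma_0 N^{-1/(2\ks)}$ and $h$ centered in $Z$, one writes $M = c^k D_k R_k D_k$ where $D_k = \mathrm{diag}(x_i^k)$ and $R_k$ has entries $N^{-1/2} h(Z_{ij})$; then $\normop{M} \leq c^k \normop{D_k}^2 \normop{R_k}$. The diagonal part contributes $\normop{D_k}^2 = \max_i x_i^{2k} = O((\log N)^{2k})$ by the exponential-tail assumption on $\vect x$, while $R_k$ is a centered Wigner-type matrix (independent of $\vect x$) with bounded moments, so $\normop{R_k} = O(1)$ a.s.\ by standard concentration. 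Hence $\normop{M} = O\big(N^{-k/(2\ks)} (\log N)^{2k}\big) \to 0$ for every $1 \le k \le \ks+1$ — crucially including $k=\ks$ for the fluctuation term and $k=\ks+1$ for the remainder (here the remainder needs a uniform bound on $f^{(\ks+1)}$ or, in the low-regularity case, a separate truncation argument). The main obstacle, then, is the last point: making the Taylor remainder control rigorous under only the moment hypothesis $\E[f(Z)^4]<\infty$ rather than smoothness — this forces a careful mollification/truncation scheme in which one must simultaneously keep the information coefficients $\vartheta_k$ stable, keep the entrywise $L^4$ norms controlled so the Hilbert–Schmidt error vanishes, and preserve independence between the noise and the signal. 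I expect the bulk of Section~\ref{sec:MainResults} and its supporting lemmas to be devoted precisely to this approximation argument and to the a.s.\ operator-norm bounds on the auxiliary Wigner-type matrices $R_k$.
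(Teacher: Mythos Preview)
Your proposal is essentially the same strategy the paper uses: Taylor-expand entrywise, identify the zeroth-order term as the Wigner matrix, extract the rank-one spike from the $k=\ks$ mean, and control the sub-critical and fluctuation pieces via the factorization $M = c^k \diag(\vect x^k)\, R_k\, \diag(\vect x^k)$ with $R_k$ a centered Wigner-type matrix; the paper then handles non-smooth $f$ by exactly the mollification/truncation scheme you anticipate, including an explicit polynomial correction to force $\vartheta_k(f_t)=0$ for $k<\ks$.

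One small slip: for the Taylor remainder ($k=\ks+1$) the entries are $N^{-1/2} f^{(\ks+1)}(\xi_{ij})\,(c\, x_i x_j)^{\ks+1}$ with $\xi_{ij}$ an intermediate point depending on \emph{both} $Z_{ij}$ and $x_i x_j$, so $R_{\ks+1}$ is neither centered nor independent of $\vect x$, and the claim $\normop{R_{\ks+1}}=O(1)$ from Wigner concentration does not apply. The paper instead uses the uniform bound $\|f^{(\ks+1)}\|_\infty<\infty$ together with the crude Frobenius estimate $\normop{\cdot}\le\normf{\cdot}$, which costs an extra $\sqrt N$ and gives the correct rate $O\big((\log N)^{2\ks+2} N^{-1/(2\ks)}\big)$ rather than your $O\big(N^{-(\ks+1)/(2\ks)}\big)$. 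This does not affect the conclusion $\normop{\rdmmat E}\to 0$.
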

The rank-one equivalence at the relevant scaling allows us to obtain the limiting behavior of both the leading eigenvalue and associated eigenvector of this non-linear model through classical methods. In particular, the matrix $\Yf$ exhibits the usual BBP transition \cite{BBP}.
\begin{cor}[The BBP Transition (Informal)]\label{thm:BBP_nonlinear_informal}
Under the same Hypothesis of Theorem~\ref{thm:rank1_informal}, if we denote by $\lambda_1$ and $\rdmvect{v}_1$ the leading eigenvalue and eigenvector of $\Yf$, we have:
\begin{align}
\label{eq:BBPNL:leadingeig_intro}
    \lambda_1 
    &\xrightarrow[N\to \infty]{\as} 
    \mathsf{l} \left( \gamma_0^{\ks} \frac{ \vartheta_\ks(f)}{\ks !} \, m_{2\ks} \, , \, \sqrt{\vartheta_0(f^2) - \vartheta_0(f)^2} \right)
    \, , && 
\end{align}
and
\begin{align}
\label{eq:BBPNL:overlap_intro}
    \Bigg\langle \rdmvect{v}_1, \frac{\vect{x}^{\ks}}{\| \vect{x}^{\ks}\|} \Bigg\rangle^2 
    &\xrightarrow[N\to \infty]{\as}
    \mathsf{m}\left( \gamma_0^{\ks} \frac{ \vartheta_\ks(f)}{\ks !} \, m_{2\ks} \, , \, \sqrt{\vartheta_0(f^2) - \vartheta_0(f)^2} \right)  
    \, . && 
\end{align}
where $\mathsf{l}(.,.)$ and $\mathsf{m}(.,.)$ are the classical quantities for the leading eigenvalue and squared overlap for rank-one perturbations of Wigner matrices as defined in Eq.\ \eqref{eq:def:leadeig_BBP}.
\end{cor}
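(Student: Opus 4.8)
\emph{Strategy.} The corollary will be read off from the rank-one equivalence of Theorem~\ref{thm:rank1_informal} together with the classical theory of finite-rank additive perturbations of Wigner matrices; the only genuine work is carrying the conclusions through the vanishing operator-norm error $\rdmmat E$.

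\emph{Step 1: reduction to a spiked Wigner matrix.} By Theorem~\ref{thm:rank1_informal} (whose proof realizes the equality in law as an almost-sure decomposition on a common probability space) we may write
\[
\Yf \;=\; \sqrt{\varfZ}\,\rdmmat W \;+\; \theta_N\,\bu_N \bu_N^{\intercal} \;+\; \rdmmat E ,
\]
with $\normop{\rdmmat E}\to 0$ almost surely, where $\bu_N := \vect{x}^{\ks}/\|\vect{x}^{\ks}\|$ is a unit vector measurable with respect to $\vect x$ (hence independent of $\rdmmat W$) and $\theta_N$ is the scalar coefficient of the normalized rank-one term furnished by the theorem. That coefficient is a fixed multiple (depending only on $f,\gamma_0,\ks$) of $\|\vect x^{\ks}/\sqrt N\|^2 = N^{-1}\sum_{i\le N} x_i^{2\ks}$; since the $x_i$ are i.i.d.\ with exponential tails, hence with all moments finite, the strong law of large numbers gives $\theta_N \to \theta$ almost surely with $\theta := \gamma_0^{\ks}\,\coeffks\, m_{2\ks}/\ks!$ --- precisely the first argument of $\mathsf l$ and $\mathsf m$ in \eqref{eq:BBPNL:leadingeig_intro}--\eqref{eq:BBPNL:overlap_intro}. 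Replacing $f$ by $-f$ if necessary --- which flips the sign of $\theta$ while leaving $\varfZ$ and the class of $\rdmmat W$ unchanged --- we may assume $\theta>0$ (it is nonzero since $\coeffks\neq0$ by the definition of $\ks$), so that the leading eigenvalue is the largest one and the sign change does not affect the squared overlap.

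\emph{Step 2: the classical BBP transition.} Condition on $\vect x$; on the full-measure event $\{\theta_N\to\theta\}$, the matrix $M_N := \sqrt{\varfZ}\,\rdmmat W + \theta_N\bu_N\bu_N^{\intercal}$ is a rank-one additive perturbation, in the deterministic ($\vect x$-measurable) direction $\bu_N$, of a centered normalized Wigner matrix with uniformly bounded fourth moments. The classical results on the BBP transition then give, almost surely,
\[
\lambda_1(M_N)\;\longrightarrow\;\mathsf l\!\big(\theta,\sqrt{\varfZ}\big),
\qquad
\big\langle \rdmvect v_1(M_N),\,\bu_N\big\rangle^2\;\longrightarrow\;\mathsf m\!\big(\theta,\sqrt{\varfZ}\big),
\]
where one uses: the finite-fourth-moment hypothesis (guaranteed by Theorem~\ref{thm:rank1_informal}) to place the bulk edge at $2\sqrt{\varfZ}$ and to obtain the transition almost surely; the fact that the limiting quantities do not depend on the spike direction $\bu_N$; and the continuity of $\mathsf l,\mathsf m$ in their first argument to pass from $\theta_N$ to $\theta$. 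Averaging over $\vect x$ makes these statements unconditional.

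\emph{Step 3: absorbing $\rdmmat E$, and the main obstacle.} It remains to verify that adding $\rdmmat E$ with $\normop{\rdmmat E}\to 0$ does not alter the limits. For the eigenvalue this is immediate from Weyl's inequality, $|\lambda_1(M_N+\rdmmat E)-\lambda_1(M_N)|\le \normop{\rdmmat E}\to0$, which proves \eqref{eq:BBPNL:leadingeig_intro}. The eigenvector is the delicate part. In the supercritical phase --- where $\mathsf m(\theta,\sqrt{\varfZ})>0$, equivalently $\theta>\sqrt{\varfZ}$, and the limiting top eigenvalue lies strictly above the bulk edge $2\sqrt{\varfZ}$ --- the spectrum of $M_N$ almost surely eventually exhibits a macroscopic gap isolating $\lambda_1$; a Davis--Kahan $\sin\Theta$ estimate (equivalently a contour-integral bound on the Riesz projection around $\lambda_1$) then shows that the top eigenprojector of $M_N+\rdmmat E$ differs from that of $M_N$ by $O(\normop{\rdmmat E})$, so $\langle\rdmvect v_1(\Yf),\bu_N\rangle^2$ still converges to $\mathsf m(\theta,\sqrt{\varfZ})$, which is \eqref{eq:BBPNL:overlap_intro}. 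In the subcritical phase $\mathsf m(\theta,\sqrt{\varfZ})=0$ and only an upper bound is required; this follows from the delocalization of the top eigenvector of $M_N$, a statement about the resolvent near the edge and hence insensitive to an $o(1)$ operator-norm perturbation. The main obstacle is exactly this eigenvector transfer: the Davis--Kahan argument relies on the macroscopic spectral gap, present only in the supercritical phase, and one must take care to invoke the classical spiked-Wigner theorems in precisely the form needed here --- almost-sure convergence, only a fourth-moment assumption on $\rdmmat W$, and an arbitrary spike direction independent of $\rdmmat W$.
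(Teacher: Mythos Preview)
Your proposal is correct and follows the same approach as the paper: reduce via the rank-one equivalence to a spiked Wigner matrix, invoke the classical BBP result (the paper's Theorem~\ref{thm:rank1_lin}), and use the law of large numbers $\|\vect x^{\ks}\|^2/N \to m_{2\ks}$ to identify the effective spike strength $\gamma_0^{\ks}\,\coeffks\, m_{2\ks}/\ks!$. Your Step~3 is in fact more careful than the paper's one-line derivation, which does not spell out how the vanishing error $\rdmmat E$ is absorbed into the eigenvector conclusion; your Weyl / Davis--Kahan argument is a natural way to fill that in.
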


Putting these results into action for the Gaussian noise and the non-linear functions $f_1(x) := |x|$ and $f_2(x) := x^3- 3x $ in Sec.~\ref{sec:examples}, one has respectively $\ks = 2$ and $\ks=3$ such that the relevant scaling of Eq.~\eqref{eq:nontrivialscaling} writes $\gamma(N) = \gamma_0 N^{\alpha}$, with $\alpha=1/4$ and $\alpha=1/3$ respectively, in accordance with the scaling of the SNR found numerically. In the middle plots of Fig.~\ref{fig:abs_rescaled} and Fig.~\ref{fig:He3_rescaled}, we then plot the leading eigenvalues as a function of the rescaled SNR $\gamma_0$ to see that the empirical data points collapse to the theoretical curve from the BBP transition of Corrollary \ref{thm:BBP_nonlinear}. In the rightmost plots of Fig.~\ref{fig:abs_rescaled} and Fig.~\ref{fig:He3_rescaled}, we then plot the overlap between the leading eigenvector and the signal again the rescaled SNR $\gamma_0$, again seeing a collapse of the theoretical prediction of the BBP transition of Corollary \ref{thm:BBP_nonlinear} with again a perfect agreement.

\jump

A major advantage of our approach is that our decomposition is quite robust, and our result can be trivially extended to deal with variants and generalizations of such a model, such as finite rank deformations of Wigner matrices, rank-one deformations of variance-profile Wigner matrices, and rank-one deformations of rectangular matrices (see Section~\ref{sec:variants_and_gen}).

\paragraph{Organization of the paper -}
\label{sec:organization}
The rest of this article is organized as follows: in Sec.~\ref{sec:MainResults}, we introduce the main non-linear model studied in this work and define formally the important coefficients $\coeffk$. We then introduce the main result of our paper, summarized in Theorem \ref{thm:rank1} from which Theorem~\ref{thm:BBP_nonlinear_informal} and Corollary~\ref{thm:rank1_informal} are merely special cases under stronger assumptions. Next, in Sec.~\ref{sec:variants_and_gen}, we  generalize our result to higher-rank perturbations and non-symmetric matrices. 

The remaining sections of the paper are devoted to the proof of our result. We give a broad overview of the proof here, and defer to Sec.~\ref{sec:outlineproof} for a detailed overview of the proof strategy.  Our approach uses a rank-one equivalence of the non-linear model to obtain the spectral phase transitions, in comparison to other approaches which involve computing moments or using Stieltjes transforms. From the rank-one equivalence, we can read off the critical scaling as a function of $\ks$ and apply classical results for perturbations of Wigner matrices to obtain the phase transitions at this critical scaling. Furthermore, the proof of the rank-one equivalence is given entrywise, so it can be easily adapted to prove similar equivalences in for higher rank and non-symmetric matrices as stated in Sec.~\ref{sec:variants_and_gen}.
	
The rank-one equivalence is first proved in a simpler case when the non-linearity is smooth, in Sec.~\ref{sec:proof:smooth} through an application of Taylor's theorem and generic bounds on the operator norms of random matrices with independent entries to control the error. In Sec.~\ref{sec:proof:notsmooth}, we prove that the results extend to non-smooth non-linearities provided that there exists a sequence of smooth approximating functions which preserves the information index. Lastly, in Sec.~\ref{sec:proofsmoothdensity}, we show that when the noise has a smooth density function, we can explicitly construct this sequence of index preserving smooth approximating functions through a truncation and smoothing argument, showing that we can relax the assumptions on the function $f$.

\subsection{Notations}
\label{sec:Notations}
 We recall that a sequence of events $(E_N)_{N\in \N}$ occurs \textbf{with overwhelming probability} if for any fixed $A>0$, there exists a constant $C_A$ independent of $N$ such that   $\P(E_N) \geq 1 - C_A N^{-A}$ for every $N \in \N$.   Similarly, a sequence of events $(E_N)_{N\in \N}$ occurs \textbf{with high probability} if for some $c>0$ independent of $N$, there exists constants $c,C>0$ independent of $N$ such that  $\P(E_N) \geq 1 - C N^{-c}$. We say that the law $\pP(\cdot)$ of a random variable $X$ has \textbf{stretched exponential tails} if there exists a constant $\alpha > 0$ and universal constants $C,c>0$ such that for every $M>0$, 
$\pP(|X| > M ) \leq C e^{-cM^\alpha}$.  We call $\alpha$ an exponent of the stretched exponential decay. Equality in law between two random variables $A$ and $B$ is denoted by $A \ed B$.
\jump 
A function $f$ is \textbf{locally Lipschitz with polynomial constant} if for every $L > 0$, there exists a constant $C(L)$ such that
\begin{align}
\label{eq:def:loclip}
   \sup_{|x|, |y| \leq L} \bigg| \frac{ f(x) - f(y) }{x - y} \bigg| 
    &\leq
    C(L)
    , &&
\end{align}
 and $C(L)$ is bounded  by a polynomial, that is there exists a $\beta \in \N$ and a finite constant $C$ such that for every real number $L$, $|C(L)| \leq C |L|^\beta$. We recall that if $C(L) \equiv C$ is independent of $L$ and the supremum is over the set of definition of $f$, then $f$ is \textbf{Lipschitz}.
 \jump 
 For $\vect{x}:=(x_1,\dots, x_N)$ and $k$ an integer, we denote by $\xk := (x_1^k,\dots, x_N^k)$ the vector where each entry is raised to the power $k$. For $(N\times M)$ matrices (or vectors if $M=1$) $\mat{A}$ and $\mat{B}$, the \textbf{Hadamard product} is denoted by $(\mat{A} \odot \mat{B})_{ij} := A_{ij} B_{ij}$ and the \textbf{Hadamard  power} by $(\mat{A}^{\odot k})_{ij} := A_{ij}^k$. The \textbf{leading eigenvalue and eigenvectors} of a symmetric matrix, refer to the eigenvalue  with the largest absolute value and its associated eigenvector. 

\section{Main Results}
\label{sec:MainResults}

\subsection{Model and Assumptions for the Non-Linear Rank-One Perturbation}
\label{sec:model_and_assumptions}
We consider the symmetric matrix $\Yf$ with entries given by
\begin{align}
\label{eq:model_entries}
    Y_{ij} 
    &:= 
    \frac{1}{\sqrt{N}} \bigg[  f \left( Z_{ij} + \frac{\gamma(N)}{\sqrt{N}} x_i x_j \right) - \E f(Z_{ij}) \bigg] 
     \, , &&
\end{align}
where $\vect{x} = (x_1, \dots, x_N)^\top$ is the \emph{signal vector}, $\rdmmat{Z} = \left( Z_{ij} \right)_{1 \leq i,j \leq N}$ is a symmetric \emph{noise matrix}, $f$ is the \emph{non-linearity} and $\gamma(N)$ is the \emph{signal-to-noise ratio} (SNR) which, in this work, might depend on the dimension $N$. Subtracting the constant terms $ \E f(Z_{ij}) $ avoids the appearance of a (non-informative) Perron-Frobenius mode. We now list the assumptions needed for our main result to hold, where some of them involve an integer $\ks$ defined in the next section.

\jump
\underline{\emph{For the signal vector:}}
\noindent
\newcounter{hypcount}
\newcounter{hyppcount}
\begin{enumerate}[leftmargin=1.5cm,label={(\bfseries H\arabic*)}]
\item\label{hyp:x} 
We assume the entries of the vector $\vect{x}$ to be independent and identically distributed (iid), that is with law $ \pi_X^{\otimes N} $ and the distribution $\pi_X(\cdot)$ has \emph{stretched exponential tails} and is assumed to be independent of the dimension $N$. For simplicity, we also assume the distribution to be non-trivial, that is $\pi_{X} \neq \delta_0$.
\setcounter{hypcount}{\value{enumi}}
\end{enumerate}
We also denote by $m_{k} := \E_{x \sim {\pi_X}} [ x^{k} ]$, the $k$-th moment associated to this distribution. 

\jump
\underline{\emph{For the noise matrix:}}
\noindent
\begin{enumerate}[leftmargin=1.5cm,label={(\bfseries H\arabic*)}]
\setcounter{enumi}{\value{hypcount}}
\item\label{hyp:noise} 
We assume the entries $Z_{ij}$ are $iid$ and with distribution $\mu_Z$ for $i \leq j$, where $\mu_Z$ has \emph{stretched exponential tails} and is independent of the dimension $N$. 
\setcounter{hypcount}{\value{enumi}}
\end{enumerate}
We denote by $I$ the support of the distribution of $Z$ and by $\left\{ \opk \right\}_{k \in \N}$ the family of orthogonal polynomials associated to $\mu_Z$, which always exists since $\mu_Z$ admits moments of any order. The case where $\dd \mu_Z(x) = \dd \mu_G(x):= w_G(x) \dd x :=  (2 \pi)^{-1/2} \mathrm{e}^{-x^2/2} \dd x$ is the Gaussian distribution is of particular importance and in this case, we simply denote  $Z \equiv G$. We recall that for the Gaussian measure, the associated orthogonal polynomials are the famous \emph{Hermite polynomials} $\mathrm{He}_{k}(x)$ defined as the solution of the differential-recurrence relation $\mathrm{He}_{n + 1} = x\mathrm{He}_{n} - \mathrm{He}_{n}^{\prime}$ with initial conditions $\He_0 =0$ and $\He_1 =1$. 

\jump
\underline{\emph{For the non-linearity:}}
\jump 
For a function $f \in C^{k_0}$  for some $k_0 \geq 1$ and $f^{(k)} \in L^1(\dd \mu_Z)$ for  $k \leq k_0$, we define  for $k \leq k_0$ the \textbf{$k$-th (generalized) information coefficient} of $f$ as      
\begin{align}
        \label{eq:coeff_f_smoothf}
          \coeffk
          &= 
          \E_Z f^{(k)}(Z)
          \, ; &&
    \end{align}
When $k=0$, this coefficient simplifies to $\vartheta_0(f) = \E_Z f(Z)$ and in particular  $\vartheta_0(f^2) - \vartheta_0(f)^2 $ is the variance of the random variable $f(Z)$ for $f \in L^2(\dd \mu_Z)$. We define the \textbf{(generalized) information index} $\ks$ as:
\begin{align}
	\label{eq:def_kstar}
	\ks 
    &:=
    \inf \left\{ 1 \leq k \leq k_0  \, \mbox{ such that } \,  \coeffk \neq 0 \right\} 
    \, . &&
 \end{align}
\jump
Next, we either assume that $f$ \emph{is sufficiently smooth and bounded} in the following sense:
 \begin{enumerate}[leftmargin=1.5cm,label={(\bfseries{$\mathbf{\tilde{H}}$}\arabic*)}]
\setcounter{enumi}{\value{hypcount}}
\item\label{hyp:proof:ssmoothf} $f \in C^{\ks+1}$ such that $f^{(k)} \in L^4(\mu_Z)$ for $k \leq \ks$ and $\norm{f^{(\ks+1)}}_{\infty} < \infty$.
\end{enumerate}
or in the case that $f$ is not smooth, that
\begin{enumerate}[leftmargin=1.5cm,label={(\bfseries H\arabic*)}] 
\setcounter{enumi}{\value{hypcount}}
\item \label{hyp:nonlin_generalized:smoothapprox}
\begin{enumerate}
    \item\label{hyp:nonlin_generalized}  $f \in L^4(\mu_Z)$ and $f$ is locally Lipschitz with polynomial  constant,
\end{enumerate}
\jump 
and we can find a sequence $(f_t)_t$ of smooth functions $f_t \in \LfourZ$ such that 
    \begin{enumerate}
    \setcounter{enumii}{1}
    \item $f_{t} \xrightarrow[t\to \infty]{}f$ in $\LfourZ$. \label{hyp:nonlin_generalized:L4}
    \item $f_{t} \in C^\infty$ has compact support. \label{hyp:nonlin_generalized:cptsup}
    \item There exists $\ks$ such that $f_t^{(k)} \in \LfourZ$ for $0 \leq k \leq \ks$, $\| f_t^{(\ks + 1)} \|_{\infty} < \infty$ for all $t$ and
    \label{hyp:nonlin_generalized:derivbounds}
    \item $\vartheta_{k}(f_{t}) \xrightarrow[t\to \infty]{}0$ for all $k < \ks$ and $\vartheta_{\ks}(f_{t})$ converges as $t$ goes to infinity towards  a non-vanishing quantity that we denote $ \vartheta_{\ks}(f)$. In this case, we define the information index of $f$ to be this index $\ks$.     
     \label{hyp:nonlin_generalized:coeff}
\end{enumerate}
\setcounter{hypcount}{\value{enumi}}
\end{enumerate}
Note that while \ref{hyp:proof:ssmoothf} is stronger than the more general assumption \ref{hyp:nonlin_generalized:smoothapprox}, it allows us to obtain a quantitative estimate of the error rate in the main theorem (Thm.~\ref{thm:rank1}) of this paper, while this rate will be lost in the more general setting of  \ref{hyp:nonlin_generalized:smoothapprox}. 
\jump
Assumption \ref{hyp:nonlin_generalized:smoothapprox} is the most general form of our assumption on $f$ which does not need the non-linearity $f$ to be smooth. It is, in particular, satisfied - and this is proven in Sec.~\ref{sec:proofsmoothdensity} - in the critical case where $f$ satisfies \ref{hyp:nonlin_generalized}  and \emph{the noise distribution is sufficiently smooth and bounded}, that is, if we have the following joint assumptions on $f$ and $\mu_Z$.
\begin{enumerate}[leftmargin=1.5cm,label={(\bfseries H\arabic*)}]
\setcounter{enumi}{\value{hypcount}}
\item
\label{hyp:smoothZ+BC}
    \begin{enumerate}
        \item\label{hyp:smoothZ}
        The distribution $\mu_Z$ admits a density $w_Z:= \frac{\dd \mu_Z}{\dd z}$, with support on the whole real line.  Moreover, there exists an integer number $\ell$ such that  its $k$-th derivative $w_Z^{(k)}$ exists almost everywhere for all $k \leq \ell  + 1$ and satisfies $ w_Z^{(k)} \in L^2(\dd x) \cap L^\infty(\dd x) $, for all $k \leq \ell $. 
        \item\label{hyp:BC_fandwz} $f$ is a measurable function such that
        $\int |f(x)  w^{(k)}_Z(x) | \, \dd x < \infty $ for $k\leq \ell+1$.\setcounter{hypcount}{\value{enumi}}
        \item
        We then set, for $k\le \ell$,
        \begin{align}
\label{eq:informationcoeff_smoothdensity}
    \vartheta_k(f) &:= (-1)^{k} \int_{\mathbb{R}} f(x) w_Z^{(k)}(x) \dd x \, . &&
\end{align}
        and we assume that $\ell$ is larger than $\ks$, the smallest index $k$ so that $\vartheta_k(f)$ does not vanish. In this case, we define the information index of $f$ to be this index $\ks$.  
        
    \end{enumerate}
\end{enumerate}
Note that whenever $f$ is sufficiently smooth, Eq.\ \eqref{eq:informationcoeff_smoothdensity} simply corresponds to the integration by part of Eq.\ \eqref{eq:coeff_f_smoothf}. 
\begin{rem}
   The motivating example of $f_1(x) = |x|$ and $Z$ is Gaussian in Figure~\ref{fig:abs_rescaled} satisfies Hypothesis~\ref{hyp:nonlin_generalized:smoothapprox} because the Gaussian noise satisfies Hypothesis~\ref{hyp:smoothZ+BC}. 
\end{rem}
\jump
We conclude this section on the model with two remarks relating the information exponent $\vartheta_k(f)$ to orthogonal polynomials. 
\begin{rem}[Gaussian Noise and Hermite coefficients] For Gaussian noise, the density $w_G(x)$ satisfies \ref{hyp:smoothZ+BC} and using Stein's identities, $ w^{(k)}_G(x) = (-1)^{k} \Hek(x) w_G(x)$, one can write the information coefficient in terms of the \emph{Hermite coefficients} of $f$, namely  $\vartheta_k(f) = \langle f, \Hek \rangle_{L^2(\mu_G)}$.  These coefficients appear naturally in several recent studies related to RMT.
\end{rem}

\begin{rem}[Information coefficients and orthogonal decomposition] Under additional assumptions on $f_t$ and $\mu_Z$, one can differentiate term by term the decomposition of $f_t$ in the orthogonal polynomials basis $\{\opk\}_{k \in \N}$ associated to $\mu_Z$, to write down its $k$-th information coefficient as $\vartheta_k(f_t) =
        \sum_{n=k}^{\infty} \frac{\< f_t, \op_n \>}{\< \op_n , \op_n \>} \E_Z \op_n^{(k)}(Z)
$ and by Hypothesis \ref{hyp:nonlin_generalized:L4}, this yields the following formula:
\begin{align}
    \label{eq:def:coeff_f}
        \vartheta_k(f) 
        &=
        \sum_{n=k}^{\infty} \frac{\< f, \op_n \>}{\< \op_n , \op_n \>} \E_Z \op_n^{(k)}(Z)
        \, . &&
\end{align} 
\end{rem}
\jump

\subsection{Reminder on the BBP Phase Transition for the Linear Model}
\label{sec:usualBBP}
\jump 
Before jumping to the main result of this paper, let us first recall the classical result for the linear model, as the result for the non-linear model will be similar.

\jump  We say that a matrix $\rdmmat{W}$ is a \textbf{centered and normalized Wigner matrix with bounded fourth moment} if up to the symmetry constraint, $W_{ij} = W_{ji}$, the entries $\sqrt{N} W_{ij}$ are independent and distributed according to a law $\pP(.)$ which is independent of $N$, with mean zero and variance one,  and admits a finite fourth moment.  For such matrices, it is well known that as $N \to \infty$ the limiting spectral distribution is given by the semi-circular distribution with support between $-2$ and $2$ (see for example \cite{AGZ}) and the operator norm converges to $2$ since the fourth moment is bounded \cite{BaiYin88LargestEigenval}.

\jump Similarly, if we introduce the two deterministic piecewise functions 
\begin{align}
\label{eq:def:leadeig_BBP}
    \mathsf{l}(\Tilde{\gamma},\sigma) 
    &:=
    2 \sigma \, \mathrm{sign}(\tilde\gamma) \,  \Ind{   |\Tilde{\gamma} | < \sigma} +  \left( \tilde{\gamma} + \frac{\sigma^2}{ \tilde{\gamma}} \right) \Ind{   | \Tilde{\gamma} | \geq \sigma }  
    &\mbox{and} && 
    \mathsf{m}(\Tilde{\gamma},\sigma) 
    &:=
    \left( 1 - \frac{\sigma^2}{\tilde{\gamma}^2} \right) \Ind{    |\Tilde{\gamma} | \geq \sigma }
    \, , &&
\end{align}
where  $ \Ind{   x \geq c}$ is the indicator function, that is $ \Ind{     x \geq c } = 1$ if $x \geq c$ and is null otherwise, we have the following result for the rank-one perturbation of such matrices:

\jump
\begin{theo}[\cite{Pch2005,renfrew13} Rank-one perturbation of Wigner matrices]\label{thm:rank1_lin}
Let $\rdmmat{W}$ be a \emph{centered and normalized Wigner matrix with  bounded fourth moment}, $\vect{u} \in \mathbb{R}^N$  such that  $ \| \vect{u} \|  \to 1$ as $N\to \infty$, $\sigma>0$ and $\tilde\gamma$ real, then if we denote by $\lambda_1$ and $\rdmvect{v}_1$ the leading eigenvalue and corresponding eigenvector of $\sigma \rdmmat{W} + \Tilde{\gamma} \vect{u} \vect{u}^{\top}$, we have:
\begin{align}
\label{eq:BBP:leadingeig}
    \lambda_1 
    &\xrightarrow[N\to \infty]{\as}
    \mathsf{l}(\Tilde{\gamma},\sigma)
    \, , && 
\end{align}
and
\begin{align}
\label{eq:BBP:overlap}
    \langle \rdmvect{v}_1, \vect{u} \rangle^2 
    &\xrightarrow[N\to \infty]{\as}
    \mathsf{m}(\Tilde{\gamma},\sigma)
    \, . && 
\end{align}
where $\mathsf{l}(.,.)$ and $\mathsf{m}(.,.)$ are defined in Eq.\ \eqref{eq:def:leadeig_BBP}.
\end{theo}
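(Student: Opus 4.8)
The plan is to reduce to the normalized case and then run the classical resolvent (secular equation) argument for rank-one deformations of Wigner matrices. Since $\sigma\rdmmat W+\tilde\gamma\vect u\vect u^{\top}=\sigma\big(\rdmmat W+(\tilde\gamma/\sigma)\vect u\vect u^{\top}\big)$, all eigenvalues are multiplied by $\sigma$ and eigenvectors are unchanged, so it suffices to treat $\sigma=1$ and replace $\tilde\gamma$ by $\tilde\gamma/\sigma$ afterwards. Likewise $-\rdmmat W$ is again a centered and normalized Wigner matrix with bounded fourth moment, and the eigenpairs of $\rdmmat W+\tilde\gamma\vect u\vect u^{\top}$ are obtained from those of $-\rdmmat W+(-\tilde\gamma)\vect u\vect u^{\top}$ by negating eigenvalues and keeping eigenvectors, so it is enough to analyze the largest eigenvalue for $\tilde\gamma\ge 0$; the case $\tilde\gamma<0$ follows by reading off the smallest eigenvalue. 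Write $\rdmmat G(z):=(\rdmmat W-z)^{-1}$. The starting point is the rank-one determinant identity
\begin{equation*}
\det\!\big(\rdmmat W+\tilde\gamma\vect u\vect u^{\top}-z\big)=\det(\rdmmat W-z)\,\big(1+\tilde\gamma\,\vect u^{\top}\rdmmat G(z)\vect u\big),
\end{equation*}
so every eigenvalue of $\rdmmat W+\tilde\gamma\vect u\vect u^{\top}$ outside the spectrum of $\rdmmat W$ is a zero of the secular function $g_N(\lambda):=1+\tilde\gamma\,\vect u^{\top}\rdmmat G(\lambda)\vect u$. The one analytic input I would invoke — and it is exactly here that the theorem is imported from \cite{Pch2005,renfrew13} — is the macroscopic anisotropic law: for deterministic $\vect u$ with $\norm{\vect u}\to 1$,
\begin{equation*}
\vect u^{\top}\rdmmat G(z)\vect u \xrightarrow[N\to\infty]{\as} m_{\mathrm{sc}}(z),\qquad \vect u^{\top}\rdmmat G(z)^{2}\vect u \xrightarrow[N\to\infty]{\as} m_{\mathrm{sc}}'(z),
\end{equation*}
locally uniformly on $\C\setminus[-2,2]$, together with $\normop{\rdmmat W}\to 2$ (Bai--Yin, using the finite fourth moment). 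Here $m_{\mathrm{sc}}$ is the Stieltjes transform of the semicircle law, characterized by $m_{\mathrm{sc}}(z)^{2}+z\,m_{\mathrm{sc}}(z)+1=0$ and $m_{\mathrm{sc}}(z)\sim -1/z$ at infinity; on $(2,\infty)$ it is real, strictly increasing from $-1$ to $0$, with $m_{\mathrm{sc}}'>0$.

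\emph{Supercritical regime} ($\tilde\gamma>1$). The limiting secular equation $1+\tilde\gamma m_{\mathrm{sc}}(\lambda)=0$ is $m_{\mathrm{sc}}(\lambda)=-1/\tilde\gamma\in(-1,0)$, which by monotonicity has the unique solution $\lambda_{\infty}=\tilde\gamma+1/\tilde\gamma$ in $(2,\infty)$ (use $\lambda=-m_{\mathrm{sc}}-m_{\mathrm{sc}}^{-1}$). By the intermediate value theorem applied to $g_N$ on a fixed compact neighbourhood of $\lambda_\infty$, together with the uniform convergence above and monotonicity of $m_{\mathrm{sc}}$, $g_N$ has, for large $N$, a unique zero there converging to $\lambda_\infty$; since $\normop{\rdmmat W}\to2$ there is no spectrum of $\rdmmat W$ above $2+\epsilon$ (hence no spurious zero of $g_N$), and $\lambda_1\le\normop{\rdmmat W}+\norm{\vect u}^{2}\tilde\gamma$ keeps everything compact, so this zero is the leading eigenvalue $\lambda_1\to\lambda_\infty$. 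For the eigenvector, from $(\rdmmat W-\lambda_1)\rdmvect v_1=-\tilde\gamma\langle\vect u,\rdmvect v_1\rangle\vect u$ and invertibility of $\rdmmat W-\lambda_1$ one gets $\rdmvect v_1=-\tilde\gamma\langle\vect u,\rdmvect v_1\rangle\rdmmat G(\lambda_1)\vect u$, hence $1=\norm{\rdmvect v_1}^{2}=\tilde\gamma^{2}\langle\vect u,\rdmvect v_1\rangle^{2}\,\vect u^{\top}\rdmmat G(\lambda_1)^{2}\vect u$, so $\langle\vect u,\rdmvect v_1\rangle^{2}\to\big(\tilde\gamma^{2}m_{\mathrm{sc}}'(\lambda_\infty)\big)^{-1}$. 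Differentiating $m_{\mathrm{sc}}^{2}+z\,m_{\mathrm{sc}}+1=0$ yields $m_{\mathrm{sc}}'=-m_{\mathrm{sc}}/(2m_{\mathrm{sc}}+z)$, which at $(m_{\mathrm{sc}},z)=(-1/\tilde\gamma,\tilde\gamma+1/\tilde\gamma)$ equals $1/(\tilde\gamma^{2}-1)$, so $\langle\vect u,\rdmvect v_1\rangle^{2}\to 1-\tilde\gamma^{-2}$. Undoing the $\sigma$-scaling reproduces $\mathsf{l}(\tilde\gamma,\sigma)$ and $\mathsf{m}(\tilde\gamma,\sigma)$ on the branch $|\tilde\gamma|>\sigma$.

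\emph{Subcritical regime} ($0<\tilde\gamma<1$). Here $1+\tilde\gamma m_{\mathrm{sc}}(\lambda)\ge 1-\tilde\gamma>0$ for all $\lambda>2$, so by uniform convergence $g_N>0$ on $(2+\epsilon,\infty)$ for large $N$; with $\normop{\rdmmat W}\to2$ this shows $\rdmmat W+\tilde\gamma\vect u\vect u^{\top}$ has no eigenvalue above $2+o(1)$, while $\lambda_1\ge\lambda_{\max}(\rdmmat W)\to2$ because a nonnegative rank-one perturbation only raises eigenvalues; hence $\lambda_1\to2=\mathsf{l}(\tilde\gamma,1)$. For the overlap, Sherman--Morrison gives
\begin{equation*}
\vect u^{\top}\big(\rdmmat W+\tilde\gamma\vect u\vect u^{\top}-z\big)^{-1}\vect u=\frac{\vect u^{\top}\rdmmat G(z)\vect u}{1+\tilde\gamma\,\vect u^{\top}\rdmmat G(z)\vect u}\xrightarrow[N\to\infty]{\as}\frac{m_{\mathrm{sc}}(z)}{1+\tilde\gamma m_{\mathrm{sc}}(z)},
\end{equation*}
finite for each fixed $z>2$. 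Expanding the left side in the eigenbasis of $\rdmmat W+\tilde\gamma\vect u\vect u^{\top}$ as $\sum_i\langle\vect u,\rdmvect v_i\rangle^{2}/(\lambda_i-z)$, all summands are negative once $\lambda_1<z$ (true for large $N$), so $\langle\vect u,\rdmvect v_1\rangle^{2}\le(z-\lambda_1)\,\big|\vect u^{\top}(\rdmmat W+\tilde\gamma\vect u\vect u^{\top}-z)^{-1}\vect u\big|$; letting $N\to\infty$ and then $z\downarrow2$ along a countable sequence, the right side tends to $(z-2)\,|m_{\mathrm{sc}}(z)|/|1+\tilde\gamma m_{\mathrm{sc}}(z)|\to0$, so $\langle\vect u,\rdmvect v_1\rangle^{2}\to0=\mathsf{m}(\tilde\gamma,1)$. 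After the $\sigma$-scaling this is the regime $|\tilde\gamma|<\sigma$.

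The main obstacle is the macroscopic anisotropic law itself: establishing $\vect u^{\top}\rdmmat G(z)\vect u\to m_{\mathrm{sc}}(z)$ for \emph{deterministic} $\vect u$, under only a finite fourth moment and with enough uniformity to take $z\downarrow2$, is precisely what is done in \cite{Pch2005,renfrew13}; everything downstream is elementary complex analysis together with Weyl/Cauchy interlacing. The remaining points — making the intermediate value / uniqueness step for the outlier rigorous in a fixed neighbourhood of $\lambda_\infty$, and the boundary case $|\tilde\gamma|=\sigma$ where $\lambda_\infty=2\sigma$ and the overlap vanishes so the two pieces of \eqref{eq:def:leadeig_BBP} join continuously — are routine.
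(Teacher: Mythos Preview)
The paper does not prove this theorem: it is stated in Section~\ref{sec:usualBBP} as a classical result, attributed to \cite{Pch2005,renfrew13}, and used as a black box to deduce Corollary~\ref{thm:BBP_nonlinear} from the rank-one equivalence Theorem~\ref{thm:rank1}. So there is no ``paper's own proof'' to compare against.

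That said, your sketch is the standard resolvent/secular-equation argument that underlies the cited references, and the logic is sound. You correctly isolate the one genuinely nontrivial analytic input --- the anisotropic convergence $\vect u^{\top}\rdmmat G(z)\vect u\to m_{\mathrm{sc}}(z)$ and $\vect u^{\top}\rdmmat G(z)^{2}\vect u\to m_{\mathrm{sc}}'(z)$ for deterministic unit vectors $\vect u$, uniformly on compacts of $\C\setminus[-2,2]$, under only a fourth-moment assumption --- and everything else (the secular equation, the eigenvector formula via $\rdmvect v_1\propto\rdmmat G(\lambda_1)\vect u$, the Sherman--Morrison bound in the subcritical phase) is indeed routine once that is granted. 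Two minor remarks: first, in the paper's applications $\vect u$ is random but independent of $\rdmmat W$, so one works conditionally on $\vect u$ and your deterministic-$\vect u$ argument goes through verbatim; second, the delocalization hypothesis on $\vect u$ that \cite{renfrew13} needs for the anisotropic law is precisely what the paper flags at the end of Section~\ref{sec:phasetransition} (the result fails for localized $\vect u$ like $\vect e_1$), so your ``main obstacle'' paragraph is exactly on point.
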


\jump
Note that, up to re-scaling of $\Tilde{\gamma}$ and $\lambda_1$, one can always set $\sigma =1$ without loss of generality. Yet, forecasting the result of the non-linear case of the next section, it will be convenient to consider the general setting $\sigma>0$. We refer the readers to the works of Capitaine et al. \cite{Capitaine12fluctuations} and Diaconu \cite{diaconu2022heavy} for, respectively, the study of the fluctuations around these deterministic limits and the case where the fourth moment is infinite. 
\subsection{Phase Transition for the Non-Linear Model}
\label{sec:phasetransition}
We are now ready to state the main result of the paper:

\begin{theo}[Rank-one Equivalence for the non-linear model]\label{thm:rank1}
	Suppose that $f$, $\mu_Z$ and $\pi_X$ satisfy hypotheses~\ref{hyp:x}, \ref{hyp:noise} and \ref{hyp:nonlin_generalized:smoothapprox}. Let $\ks$ be the generalized information index associated with $f$ and suppose $\gamma(N)$ satisfies 
 \begin{align*}
     \frac{Q(\log(N)) |\gamma(N)|^{ \ks + 1} }{N^{\frac{\ks}{2}}} &\to  0 &&
 \end{align*}
    for every polynomial $Q$.
     Then, for every $\epsilon > 0$ and all $N$ sufficiently large depending on $\epsilon$, with high probability we have the following decomposition:
\begin{align}
	\Yf 
    &\ed 
    {\small{\sqrt{\vartheta_0(f^2) - \vartheta_0(f)^2} 
    \,}} \rdmmat{W}+ 
    \mat{P} + \rdmmat{E} 
    \, , &&
\end{align}
	where
	\begin{enumerate}
		\item  $\rdmmat{W}$ is a centered and normalized Wigner matrix  with  bounded fourth moment,
		\item $\mat{P}$ is the rank-one matrix:
\begin{align}
  \label{eq:def:rk1matrix}
    \mat{P}
    &:=
    \frac{\gamma(N)^{\ks}}{N^{ \frac{\ks}{2} - \frac{1}{2} }}  \frac{  \vartheta_{\ks}(f) }{\ks !}\bigg[  \frac{\vect{x}^{\ks}}{\sqrt{N}} \bigg(\frac{ \vect{x}^{\ks}}{\sqrt{N}}\bigg)^\intercal  \bigg]
    \, , &&
\end{align}
		\item  $\rdmmat{E}$ is a symmetric matrix with operator norm bounded by $\epsilon$ with high probability.
	\end{enumerate}
	Moreover, $\rdmmat{W}$ and $ \mat{P}$ are independent. 
\end{theo}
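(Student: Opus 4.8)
The plan is to proceed entrywise via Taylor expansion in the smooth case, then lift to the general case by approximation. First I would treat the case where $f$ satisfies the smooth assumption \ref{hyp:proof:ssmoothf}. Write $h_{ij} := \gamma(N) x_i x_j / \sqrt N$ for the "perturbation argument" and Taylor-expand $f(Z_{ij} + h_{ij})$ to order $\ks$ around $Z_{ij}$:
\begin{align*}
 f(Z_{ij} + h_{ij}) - \E f(Z) &= \big[f(Z_{ij}) - \E f(Z)\big] + \sum_{k=1}^{\ks} \frac{f^{(k)}(Z_{ij})}{k!} h_{ij}^k + R_{ij},
\end{align*}
where $|R_{ij}| \leq \|f^{(\ks+1)}\|_\infty |h_{ij}|^{\ks+1}/(\ks+1)!$. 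Dividing by $\sqrt N$, the $k=0$ term furnishes (after centering) a Wigner-type matrix; the key point, which I would extract as a lemma, is that $\frac{1}{\sqrt N}\big(f(Z_{ij})-\E f(Z)\big)_{ij}$ has the same law as $\sqrt{\vartheta_0(f^2)-\vartheta_0(f)^2}\,\rdmmat W$ for a centered, normalized Wigner matrix $\rdmmat W$ with bounded fourth moment (this uses $f\in L^4(\mu_Z)$). Since $\rdmmat W$ is built from $\rdmmat Z$ alone, it is independent of $\vect x$.

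The heart of the argument is to show that every term other than $k=0$ and $k=\ks$ contributes vanishing operator norm, and that the $k=\ks$ term equals $\mat P$ up to a small error. For a generic $1\leq k\leq \ks$ the term is
$\frac{1}{\sqrt N}\cdot\frac{f^{(k)}(Z_{ij})}{k!}\cdot\frac{\gamma(N)^k}{N^{k/2}}(x_i x_j)^k$, which I would split as
$\frac{\gamma(N)^k}{N^{(k+1)/2}}\cdot\frac{1}{k!}\Big[\big(f^{(k)}(Z_{ij})-\vartheta_k(f)\big)(x_ix_j)^k + \vartheta_k(f)(x_ix_j)^k\Big]$.
The first ("fluctuation") piece is a matrix with independent, mean-zero (conditionally on $\vect x$) entries with entrywise variance controlled by $\E[f^{(k)}(Z)^2]<\infty$ times $(x_ix_j)^{2k}$; by the standard operator-norm bound for random matrices with independent entries (Latała / Bandeira–van Handel type, as invoked in Sec.~\ref{sec:proof:smooth}) together with the stretched-exponential tails of $\vect x$ giving $\max_i |x_i| \leq Q(\log N)$ with overwhelming probability, its operator norm is $O\big(\gamma(N)^k N^{-1/2} \cdot \mathrm{polylog}(N)\big)$, which is $o(1)$ under the hypothesis on $\gamma(N)$ (note $|\gamma(N)|^k/N^{1/2} \leq |\gamma(N)|^{\ks+1}/N^{1/2}$ up to polynomial factors when $|\gamma|\geq 1$, and is trivially small otherwise). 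The second ("rank-one") piece for $k<\ks$ vanishes because $\vartheta_k(f)=0$ by definition of $\ks$; for $k=\ks$ it is exactly $\frac{\gamma(N)^{\ks}}{N^{\ks/2-1/2}}\frac{\vartheta_{\ks}(f)}{\ks!}\frac{\vect x^{\ks}}{\sqrt N}\big(\frac{\vect x^{\ks}}{\sqrt N}\big)^\top = \mat P$ (using $(x_ix_j)^{\ks} = x_i^{\ks} x_j^{\ks}$, i.e. the Hadamard power factorizes as a genuine rank-one matrix — this is the algebraic miracle that makes the whole thing work). Finally the remainder: $\|\frac{1}{\sqrt N}(R_{ij})\| \leq \frac{1}{\sqrt N}\|f^{(\ks+1)}\|_\infty \frac{|\gamma(N)|^{\ks+1}}{N^{(\ks+1)/2}(\ks+1)!}\|(x_ix_j)^{\ks+1}\|_{\mathrm{op}}$; since $(x_ix_j)^{\ks+1}$ is itself rank one with operator norm $\|\vect x^{\ks+1}\|^2 = O(N\cdot\mathrm{polylog}(N))$ with overwhelming probability, this is $O\big(|\gamma(N)|^{\ks+1} N^{-\ks/2}\,\mathrm{polylog}(N)\big) = o(1)$, which is precisely where the stated scaling condition $Q(\log N)|\gamma(N)|^{\ks+1}/N^{\ks/2}\to 0$ is used. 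Collecting, $\rdmmat E$ is the sum of the fluctuation pieces ($1\leq k\leq \ks$) and the Taylor remainder, hence $\|\rdmmat E\|_{\mathrm{op}} \leq \epsilon$ with high probability for $N$ large.

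For the general (non-smooth) case under \ref{hyp:nonlin_generalized:smoothapprox}, I would apply the smooth case to each $f_t$, obtaining $\rdmmat Y^{(f_t)} \ed \sqrt{\vartheta_0(f_t^2)-\vartheta_0(f_t)^2}\,\rdmmat W_t + \mat P_t + \rdmmat E_t$, and then pass to the limit $t\to\infty$: the $L^4(\mu_Z)$ convergence $f_t\to f$ controls $\|\rdmmat Y^{(f)} - \rdmmat Y^{(f_t)}\|_{\mathrm{op}}$ in expectation (an $L^4$-to-operator-norm bound for matrices with i.i.d.-in-law entries, again conditionally on $\vect x$ with the polylog bound on $\|\vect x\|_\infty$), while \ref{hyp:nonlin_generalized:coeff} gives $\vartheta_k(f_t)\to 0$ for $k<\ks$ and $\vartheta_{\ks}(f_t)\to\vartheta_{\ks}(f)\neq 0$, so $\mat P_t \to \mat P$ in operator norm (using $\|\vect x^{\ks}\|^2/N = O(\mathrm{polylog}(N))$ to keep the prefactor controlled, and noting the $k<\ks$ "leftover" rank-one terms $\frac{\gamma^k}{N^{(k+1)/2}}\frac{\vartheta_k(f_t)}{k!}\vect x^k(\vect x^k)^\top$ have operator norm $o_t(1)\cdot O(|\gamma|^k N^{-1/2}\mathrm{polylog})=o_t(1)\cdot o(1)$ and get absorbed into $\rdmmat E$). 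Choosing $t=t(N)$ growing slowly enough that all these $t$-errors are below $\epsilon/3$ yields the claim, with $\rdmmat E$ now the sum of a $t$-truncation error, the $f_t$-remainder, and the vanishing $k<\ks$ terms. The independence of $\rdmmat W$ and $\mat P$ is preserved throughout since $\rdmmat W$ depends only on $\rdmmat Z$ and $\mat P$ only on $\vect x$.

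The main obstacle I anticipate is the operator-norm control of the "fluctuation" matrices $\big((f^{(k)}(Z_{ij})-\vartheta_k(f))(x_ix_j)^k\big)_{ij}$: these have heavy-tailed entries (only $L^2$ in $Z$, and $(x_ix_j)^{2k}$ can be large), so one cannot simply quote Bai–Yin; the right move is to condition on $\vect x$, restrict to the overwhelming-probability event $\{\max_i|x_i|\leq C(\log N)^{1/\alpha}\}$ to bound the entrywise variances uniformly, and then invoke a sharp non-asymptotic bound (Bandeira–van Handel, or a Latała-type moment bound) for the resulting matrix of independent entries — and to check carefully that the resulting power of $\log N$ is still beaten by the $\gamma(N)$-scaling hypothesis, which is why the hypothesis is stated with an arbitrary polynomial $Q(\log N)$. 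A secondary subtlety is making the Taylor-with-remainder argument uniform over the (random, but polylog-bounded) range of the arguments $Z_{ij}+h_{ij}$, which the $L^\infty$ bound on $f^{(\ks+1)}$ in \ref{hyp:proof:ssmoothf} handles cleanly.
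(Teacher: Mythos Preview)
Your smooth-case argument is essentially the paper's (Lemma~\ref{thm:smoothBBP}), modulo arithmetic slips: the fluctuation matrices for $1\le k\le\ks$ have operator norm $O(|\gamma|^k N^{-k/2}\cdot\mathrm{polylog})$, not $O(|\gamma|^k N^{-1/2}\cdot\mathrm{polylog})$, via the factorization $\mathrm{diag}(\vect x^k)\cdot\frac{1}{\sqrt N}\big(f^{(k)}(\rdmmat Z)-\vartheta_k(f)\big)\cdot\mathrm{diag}(\vect x^k)$.

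The non-smooth reduction, however, has a genuine gap. Your ``leftover'' rank-one piece for $k<\ks$,
\[
\frac{\gamma^k}{N^{(k+1)/2}}\,\frac{\vartheta_k(f_t)}{k!}\,\vect x^k(\vect x^k)^\top,
\]
has operator norm $|\vartheta_k(f_t)|\cdot\frac{|\gamma|^k}{N^{(k-1)/2}}\cdot O(\mathrm{polylog})$ (since $\|\vect x^k\|^2\sim N$, not $N^{1/2}$). At the relevant scaling $\gamma\sim N^{\frac12(1-1/\ks)}$ the $N$-dependent factor is $N^{(\ks-k)/(2\ks)}$, which \emph{diverges} for every $k<\ks$: the subcritical rank-one pieces are \emph{larger} than the critical one $\mat P$, not smaller. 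Hence for each fixed $t$ your error blows up as $N\to\infty$, and you cannot fix $t$ then send $N$ large. The alternative of letting $t=t(N)\to\infty$ fast enough to force $\vartheta_k(f_{t(N)})\cdot N^{(\ks-k)/(2\ks)}\to 0$ is also blocked: the smooth-case remainder for $f_{t(N)}$ carries the constant $\|f_{t(N)}^{(\ks+1)}\|_\infty$, and hypothesis~\ref{hyp:nonlin_generalized:smoothapprox} bounds this only for each fixed $t$, not uniformly, so you lose control of that term.

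The paper repairs this by adding to $f_t$ a polynomial $g(x)=\sum_{l=0}^{\ks-1}c_l(f_t)\,x^l$ whose coefficients solve a triangular linear system in the moments of $Z$ so that $\vartheta_k(f_t+g)=0$ \emph{exactly} for all $k<\ks$. The perturbed function $\fnp:=f_t+g$ then has information index precisely $\ks$, so the smooth case applies to it for \emph{fixed} $t$ with no leftover rank-one terms; and since the $c_l(f_t)$ are continuous functions of the $\vartheta_k(f_t)\to 0$, one still has $\fnp\to f$ in $\LfourZ$, whence (after a preliminary $\mathrm{polylog}$-truncation of $f$ and $\vect x$) $\|\Ytf-\Ytfnp\|_{\mathrm{op}}=o_t(1)+o_N(1)$ by Talagrand concentration plus Lata\l a's inequality. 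One then fixes $t$ large depending on $\epsilon$ to make this $<\epsilon/2$, and finally takes $N$ large to make the smooth-case error for $\fnp$ below $\epsilon/2$.
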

\begin{rem}\label{rem:rank1}
    If $f$ is sufficiently smooth and satisfies \ref{hyp:proof:ssmoothf} then one can get explicitly the error rate $\normop{\rdmmat{E}} = O(\frac{Q(\log(N)) |\gamma(N)|^{ \ks + 1} }{N^{\frac{\ks}{2}}} )$ (see Lemma~\ref{thm:smoothBBP}). The function $Q(.)$ in this error rate is explicitly given in Sec.~\ref{sec:outlineproof} and only depends on the stretched exponential decay of the distributions $\pi_X$ and $\mu_Z$, and in particular can be set to be a constant function whenever the two have a finite support. 
\end{rem}
\jump 
Note that the matrix $\sqrt{\vartheta_0(f^2) - \vartheta_0(f)^2} \rdmmat{W}$ corresponds to the case without  rank-one perturbation $(\gamma(N) =0)$. Thus, Theorem \ref{thm:rank1} states that in the high-dimensional regime, the non-linear model effectively behaves as a \emph{linear model} with the original rank-one perturbation $\gamma(N) (\vect{x}/\sqrt{N}) (\vect{x}/\sqrt{N})^\top$ replaced by the matrix $\mat{P}$ of Eq.~\eqref{eq:def:rk1matrix}. This phenomenon, where one can effectively lift the non-linearity, has been observed in other matrix models, see for example Refs.~\cite{ElKarouiSpikeKernel,ElKaroui10Kernel,Louart18RMTtoNN,Peche19PenningtonWorah,piccolo21}.
\jump 
Lastly, let's remark that, unlike many standard results in RMT, this result is \emph{non-universal} by nature as the coefficients $\vartheta_k(f)$ depend on both the non-linearity function $f$ \emph{and} the noise distribution $\mu_Z$, for example replacing $\mu_Z$ by a different distribution $\mu_{Z'}$  might lead to a different scaling of the SNR (see Cor.~\ref{cor:nontrivialscaling} bellow). 
\jump 
As a consequence of this theorem, it follows that the correct scaling of the SNR $\gamma(N)$ is such that the non-zero eigenvalue of the matrix $\mat{P}$ of Eq.~\eqref{eq:def:rk1matrix} is of order one, which leads to the following result. 

\begin{cor}[Relevant Scaling]\label{cor:nontrivialscaling}
Under the same hypothesis as in Thm.~\ref{thm:rank1}, the relevant scaling of the SNR is given 
 \begin{align}
 \label{eq:nontrivialscaling}
     \gamma(N) 
     &= 
     \gamma_0 N^{\frac{1}{2} ( 1 - \frac{1}{\ks})} \, . &&
 \end{align}
Furthermore, if $f$ satisfies \ref{hyp:proof:ssmoothf} we have  $\|\rdmmat{E}\|_{\mathrm{op}} = O( Q(\log(N)) \, N^{- \frac{1}{2 \ks} } ) \to 0$ at this scale.
\end{cor}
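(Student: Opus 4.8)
The plan is to deduce the corollary directly from the rank-one equivalence of Theorem~\ref{thm:rank1}: the relevant scaling is simply the one that keeps the single non-zero eigenvalue of the perturbation matrix $\mat{P}$ bounded away from both $0$ and $\infty$, and one then checks that this scaling lies inside the admissible window of that theorem.

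First I would write down the non-zero eigenvalue of $\mat{P}$ from \eqref{eq:def:rk1matrix}. Since $\mat{P}$ is rank one, its only non-zero eigenvalue equals
\[
  \frac{\gamma(N)^{\ks}}{N^{(\ks-1)/2}}\,\frac{\vartheta_{\ks}(f)}{\ks!}\,\frac{\|\xks\|^2}{N}, \qquad \frac{\|\xks\|^2}{N} = \frac1N\sum_{i=1}^N x_i^{2\ks}.
\]
Because $\pi_X$ has stretched exponential tails all its moments are finite, so by the strong law of large numbers $\frac1N\sum_i x_i^{2\ks} \to m_{2\ks}$ almost surely, and $m_{2\ks}\in(0,\infty)$ since $2\ks$ is even and $\pi_X\neq\delta_0$. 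Hence the non-zero eigenvalue of $\mat{P}$ is asymptotically $\frac{\gamma(N)^{\ks}}{N^{(\ks-1)/2}}\frac{\vartheta_{\ks}(f)}{\ks!}m_{2\ks}$, and it stays of order one (neither vanishing nor diverging) if and only if $|\gamma(N)|^{\ks}$ is comparable to $N^{(\ks-1)/2}$, i.e.\ $\gamma(N)=\gamma_0 N^{\frac12(1-\frac1{\ks})}$ for a fixed constant $\gamma_0\neq 0$; with this choice the eigenvalue converges to $\gamma_0^{\ks}\frac{\vartheta_{\ks}(f)}{\ks!}m_{2\ks}$, which is exactly the effective SNR entering the BBP quantities of \eqref{eq:BBPNL:leadingeig_intro}.

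Next I would verify that this scaling is admissible for Theorem~\ref{thm:rank1}, i.e.\ that $\frac{Q(\log N)\,|\gamma(N)|^{\ks+1}}{N^{\ks/2}}\to 0$ for every polynomial $Q$. Substituting $\gamma(N)=\gamma_0 N^{\frac{\ks-1}{2\ks}}$ gives $|\gamma(N)|^{\ks+1}=|\gamma_0|^{\ks+1}N^{(\ks^2-1)/(2\ks)}$, hence $\frac{|\gamma(N)|^{\ks+1}}{N^{\ks/2}}=|\gamma_0|^{\ks+1}N^{-1/(2\ks)}$; since $N^{-1/(2\ks)}$ decays at a fixed polynomial rate while $Q(\log N)$ grows at most polylogarithmically, the product tends to $0$. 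Thus the hypothesis of Theorem~\ref{thm:rank1} holds and the decomposition $\Yf \ed \sqrt{\vartheta_0(f^2)-\vartheta_0(f)^2}\,\rdmmat{W} + \mat{P} + \rdmmat{E}$ applies with high probability and $\|\rdmmat{E}\|_{\mathrm{op}}\to 0$. Finally, under the stronger assumption \ref{hyp:proof:ssmoothf}, Remark~\ref{rem:rank1} upgrades this to $\|\rdmmat{E}\|_{\mathrm{op}} = O\!\big(\frac{Q(\log N)|\gamma(N)|^{\ks+1}}{N^{\ks/2}}\big)$; inserting the same computation and absorbing the constant $|\gamma_0|^{\ks+1}$ yields $\|\rdmmat{E}\|_{\mathrm{op}} = O(Q(\log N)\,N^{-1/(2\ks)})\to 0$, as claimed.

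There is no real obstacle here: the corollary is pure bookkeeping on top of Theorem~\ref{thm:rank1} together with the law of large numbers for $\|\xks\|^2/N$. The only points deserving a sentence of care are (i) making ``the relevant scaling'' precise — I would state it as the unique power of $N$ (up to the multiplicative constant $\gamma_0$) for which $\mat{P}$ has an eigenvalue bounded away from $0$ and $\infty$ — and (ii) confirming that this particular power still satisfies the growth condition of Theorem~\ref{thm:rank1}, which the exponent arithmetic above does with a margin of exactly the factor $N^{-1/(2\ks)}$ (this same factor reappears as the error rate).
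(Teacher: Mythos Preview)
Your proposal is correct and follows exactly the approach the paper intends: the corollary is stated there as an immediate consequence of Theorem~\ref{thm:rank1}, the relevant scaling being precisely the one that makes the non-zero eigenvalue of $\mat{P}$ of order one, and the error rate under \ref{hyp:proof:ssmoothf} is read off from Remark~\ref{rem:rank1}. Your explicit verification that $\gamma(N)=\gamma_0 N^{\frac12(1-\frac1{\ks})}$ satisfies the growth condition of Theorem~\ref{thm:rank1} (yielding the $N^{-1/(2\ks)}$ factor) is exactly the bookkeeping the paper leaves implicit.
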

Note that $\gamma(N)$ is independent of $N$ only if $\ks=1$.  In particular and as a sanity check if  $f$ is the identity map, one retrieves the usual BBP setting with  $\vect{u} \equiv \vect{x}/\sqrt{N m_2}$. Let's also remark that whenever the noise is symmetric ($Z \ed -Z$) and $f$ is even, one can easily check that $\vartheta_1(f) =0$ and hence $\ks \geq 2$, leading to a $N$-dependent scaling of the SNR for the rank-one perturbation to survive in the large dimensional regime.  Note also the bound for the error term in Cor.~\ref{cor:nontrivialscaling} gets significantly worse as one increases the critical index $\ks$, implying that, in practice, one needs to \emph{push to larger values of $N$} for the non-linear matrix $\Yf$ to effectively ‘look like' a rank-one perturbation of a Wigner matrix, see Fig.~\ref{fig:abs_rescaled} and Fig.~\ref{fig:He3_rescaled} for illustrations.
\jump 
As $\Yf$ behaves as the standard rank-one perturbation of a Wigner matrix, one can get the limiting position of the possible outlier and the overlap of the top eigenvector thanks to
Thm.\ \ref{thm:rank1_lin}. Note that $\vect{x}^\ks/\sqrt{N}$ is not normalized to have a unit norm but since the $x_i$ are i.i.d with stretched exponential tails, one has $\| \vect{x}^\ks \|^2/N \xrightarrow[N\to\infty]{\mathrm{a.s}} m_{2 \ks}$, leading to the following result.

\begin{cor}[The BBP Transition]\label{thm:BBP_nonlinear}
Under the same Hypothesis of Theorem~\ref{thm:rank1} and under the relevant scaling of Eq.~\eqref{eq:nontrivialscaling}, if we denote by $\lambda_1$ and $\rdmvect{v}_1$ the leading eigenvalue and eigenvector of $\Yf$, we have:
\begin{align}
\label{eq:BBPNL:leadingeig}
    \lambda_1 
    &\xrightarrow[N\to \infty]{\as} 
    \mathsf{l} \left( \gamma_0^{\ks} \frac{ \vartheta_\ks(f)}{\ks !} \, m_{2\ks} \, , \, \sqrt{\vartheta_0(f^2) - \vartheta_0(f)^2} \right)
    \, , && 
\end{align}
and
\begin{align}
\label{eq:BBPNL:overlap}
    \Bigg\langle \rdmvect{v}_1, \frac{\vect{x}^{\ks}}{\| \vect{x}^{\ks}\|} \Bigg\rangle^2 
    &\xrightarrow[N\to \infty]{\as}
    \mathsf{m}\left( \gamma_0^{\ks} \frac{ \vartheta_\ks(f)}{\ks !} \, m_{2\ks} \, , \, \sqrt{\vartheta_0(f^2) - \vartheta_0(f)^2} \right)  
    \, . && 
\end{align}
where $\mathsf{l}(.,.)$ and $\mathsf{m}(.,.)$ are defined in Eq.\ \eqref{eq:def:leadeig_BBP}.
\end{cor}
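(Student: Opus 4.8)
The plan is to read the corollary off from the rank-one equivalence of Theorem~\ref{thm:rank1} and the classical BBP statement of Theorem~\ref{thm:rank1_lin}; the only genuine work is to evaluate the parameters at the critical scaling and then to transfer the conclusions across the error matrix $\rdmmat{E}$ and across the $N$-dependent random coupling appearing in $\mat{P}$. First, specialize the scaling: under \eqref{eq:nontrivialscaling} one has $\gamma(N)^{\ks}=\gamma_0^{\ks}N^{\ks/2-1/2}$, so the prefactor of the rank-one matrix in \eqref{eq:def:rk1matrix} equals exactly $\gamma_0^{\ks}$, and the hypothesis of Theorem~\ref{thm:rank1} is met because $Q(\log N)\,|\gamma(N)|^{\ks+1}N^{-\ks/2}=Q(\log N)\,\gamma_0^{\ks+1}N^{-1/(2\ks)}\to 0$ for every polynomial $Q$. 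Writing $\sigma:=\sqrt{\vartheta_0(f^2)-\vartheta_0(f)^2}$, Theorem~\ref{thm:rank1} then gives (jointly with $\vect{x}$, in law) $\Yf\ed\sigma\rdmmat{W}+\mat{P}+\rdmmat{E}$ with $\rdmmat{W}$ a centered, normalized Wigner matrix independent of $\vect{x}$, with $\normop{\rdmmat{E}}\to 0$, and with $\mat{P}=\tilde\gamma_N\,\vect{u}_N\vect{u}_N^{\top}$, where $\vect{u}_N:=\vect{x}^{\ks}/\norm{\vect{x}^{\ks}}$ and $\tilde\gamma_N:=\gamma_0^{\ks}\,\frac{\vartheta_{\ks}(f)}{\ks!}\cdot\frac{\norm{\vect{x}^{\ks}}^{2}}{N}$. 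Since $\pi_X$ has stretched exponential tails all its moments are finite, so by the strong law $\norm{\vect{x}^{\ks}}^{2}/N=N^{-1}\sum_i x_i^{2\ks}\to m_{2\ks}$ almost surely; moreover $\pi_X\neq\delta_0$ forces $\vect{x}^{\ks}\neq 0$ almost surely for $N$ large, so $\vect{u}_N$ is well defined with $\norm{\vect{u}_N}=1$ and $\tilde\gamma_N\to\tilde\gamma:=\gamma_0^{\ks}\,\frac{\vartheta_{\ks}(f)}{\ks!}\,m_{2\ks}$ almost surely. It thus suffices to analyse $M:=\sigma\rdmmat{W}+\mat{P}+\rdmmat{E}$, and since $\rdmmat{W}$ is independent of $\vect{x}$ we may condition on $\vect{x}$, so that $\rdmmat{W}$ is an independent Wigner matrix and $\vect{u}_N,\tilde\gamma_N$ are ``deterministic'' with $\tilde\gamma_N\to\tilde\gamma$.

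For the leading eigenvalue, put $M_1:=\sigma\rdmmat{W}+\tilde\gamma\,\vect{u}_N\vect{u}_N^{\top}$ (the same direction, but with the fixed coupling $\tilde\gamma$). By Weyl's inequality,
\[
  \bigl|\lambda_1(M)-\lambda_1(M_1)\bigr|\le\normop{\mat{P}-\tilde\gamma\vect{u}_N\vect{u}_N^{\top}}+\normop{\rdmmat{E}}\le|\tilde\gamma_N-\tilde\gamma|+\normop{\rdmmat{E}}\xrightarrow[N\to\infty]{}0 ,
\]
while $\lambda_1(M_1)\to\mathsf{l}(\tilde\gamma,\sigma)$ almost surely by Theorem~\ref{thm:rank1_lin} (applied conditionally on the independent unit vector $\vect{u}_N$, the limit being deterministic). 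Hence $\lambda_1(\Yf)\ed\lambda_1(M)\to\mathsf{l}(\tilde\gamma,\sigma)$, which is \eqref{eq:BBPNL:leadingeig}.

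For the leading eigenvector, distinguish the two regimes. When $|\tilde\gamma|>\sigma$, the eigenvalue $\lambda_1(M_1)\to\mathsf{l}(\tilde\gamma,\sigma)$ lies, for large $N$, outside a fixed neighbourhood of $[-2\sigma,2\sigma]$, hence it is a simple eigenvalue of $M_1$ with spectral gap bounded away from $0$; since $\normop{M-M_1}\to 0$ the same holds for $M$, and the Davis--Kahan theorem gives $\normop{\rdmvect{v}_1(M)\rdmvect{v}_1(M)^{\top}-\rdmvect{v}_1(M_1)\rdmvect{v}_1(M_1)^{\top}}=O(\normop{M-M_1})\to 0$, so $\langle\rdmvect{v}_1(M),\vect{u}_N\rangle^{2}$ and $\langle\rdmvect{v}_1(M_1),\vect{u}_N\rangle^{2}$ share the limit $\mathsf{m}(\tilde\gamma,\sigma)$ supplied by Theorem~\ref{thm:rank1_lin}. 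When $|\tilde\gamma|<\sigma$ there is no spectral gap (say $\tilde\gamma>0$, so $\lambda_1(M)\to 2\sigma$ is the top eigenvalue $\mu_{\max}(M)$, the case $\tilde\gamma<0$ being symmetric), but $\mathsf{m}(\tilde\gamma,\sigma)=0$, so one only needs $\langle\rdmvect{v}_1(M),\vect{u}_N\rangle^{2}\to 0$. For this, since for any real $z>\mu_{\max}(M)$ every coefficient of the spectral decomposition of $(z-M)^{-1}$ is nonnegative, one has
\[
  \langle\rdmvect{v}_1(M),\vect{u}_N\rangle^{2}\le\bigl(z-\mu_{\max}(M)\bigr)\,\langle\vect{u}_N,(z-M)^{-1}\vect{u}_N\rangle .
\]
Choose $z=2\sigma+2\delta$ for fixed $\delta>0$: then $z-\mu_{\max}(M)\to 2\delta$, while the resolvent expansion $(z-M)^{-1}=(z-M_1)^{-1}+(z-M)^{-1}(M-M_1)(z-M_1)^{-1}$ (the correction being $O(\normop{M-M_1})$ since $z$ is at macroscopic distance from both spectra), together with the Sherman--Morrison identity and the isotropic semicircle law $\langle\vect{u}_N,(z-\sigma\rdmmat{W})^{-1}\vect{u}_N\rangle\to g_{\mathrm{sc}}(z)$ underlying Theorem~\ref{thm:rank1_lin}, shows $\langle\vect{u}_N,(z-M)^{-1}\vect{u}_N\rangle\to\Phi(z):=g_{\mathrm{sc}}(z)/(1-\tilde\gamma\,g_{\mathrm{sc}}(z))$, which stays bounded as $z\downarrow 2\sigma$ precisely because $|\tilde\gamma|<\sigma$. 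Letting $N\to\infty$ and then $\delta\to 0$ gives $\langle\rdmvect{v}_1(M),\vect{u}_N\rangle^{2}\to 0=\mathsf{m}(\tilde\gamma,\sigma)$. Transporting these limits back through the equality in law of Theorem~\ref{thm:rank1} yields \eqref{eq:BBPNL:overlap}; the non-generic borderline $|\tilde\gamma|=\sigma$ is covered by the same limiting argument.

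The scaling arithmetic and the law of large numbers are routine. The hard part will be absorbing the \emph{non-low-rank} error $\rdmmat{E}$ into the classical picture: above the transition this is immediate once one observes that the outlier's spectral gap survives (Weyl plus Davis--Kahan), but below the transition, where the relevant eigenvalue sits at the spectral edge and no gap is available, one must instead control the overlap of the \emph{perturbed} top eigenvector with the fixed direction $\vect{u}_N$ directly, which is exactly what the resolvent/isotropic-law estimate above does. One should also keep track of the mode of convergence: it inherits that of the bound on $\normop{\rdmmat{E}}$ in Theorem~\ref{thm:rank1}, which under the stronger assumption~\ref{hyp:proof:ssmoothf} is $o(1)$ with overwhelming probability, so that Borel--Cantelli upgrades the above to the claimed almost-sure convergence.
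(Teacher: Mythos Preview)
Your proposal is correct and follows the paper's approach exactly: the paper simply states that the corollary follows from Theorem~\ref{thm:rank1} combined with Theorem~\ref{thm:rank1_lin}, after noting that $\|\vect{x}^{\ks}\|^2/N\to m_{2\ks}$ almost surely. Your write-up is in fact considerably more careful than the paper's one-line justification, since you explicitly handle the transfer across the vanishing error $\rdmmat{E}$ via Weyl and Davis--Kahan above the threshold and via a resolvent/isotropic-law bound below it, and you also flag the mode-of-convergence issue that the paper glosses over.
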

\jump 
After the phase transition, the leading eigenvector $\rdmvect{v}_1$ becomes partially aligned with  $\vect{x}^{\ks}$ and \emph{not} the original vector $\vect{x}$. In particular,  whenever $\ks$ is even,  even after the transition, the matrix $\rdmvect{v}_1 \rdmvect{v}_1^{\top}$ loses all information on the sign of all the entries of $x_i x_j$.

\jump
Let's conclude this section regarding the assumptions on the signal vector $\vect{x}$. One should expect to lift the assumption \ref{hyp:x}  to extend the result to the case where the entries of $\vect{x}$ are independent but not necessarily identically distributed,  for example only being identically distributed (and different from zero) inside each block $B_{n}(N)$ of a partition of $N$  conditioned to  $|B_n(N)|/N \to \rho_n $\footnote{note that one should replace $m_{2\ks}$ by the appropriate limit of $\| \vect{x}^{\ks} \|^2/N $ in this case.}. Similarly, one can hope to replace the global independence condition by a \emph{weakly dependence condition}, for example by replacing $\vect{x}/\sqrt{N}$ by a vector $\vect{\sigma}$ drawn uniformly over $\Sn$, since for large $N$, $\vect{\sigma}$ behaves as a standard Gaussian vector normalized by its norm. However, one \textbf{cannot} replace $\vect{x}/\sqrt{N}$ with \emph{any} vector $\vect{u}$ with a fixed norm. Indeed, a crucial condition for the result to hold is that the vector $\vect{x}/\sqrt{N}$ is  \emph{delocalized}, with each entry carrying approximately the same weight of order $O(N^{-1/2})$ and replacing $\vect{x}/\sqrt{N}$ by a \emph{localized} vector (say for example $\vect{e}_1 = (1,0,\dots,0)$) will lead to a different scaling.

\section{Variants and Generalizations}
\label{sec:variants_and_gen}
In this section, we extend our result for natural variants and generalizations of Thm.~\ref{thm:rank1}. 
\subsection{Rank-\texorpdfstring{$K$}{} Deformation of Wigner Matrices}
\label{sec:finite_rk_deformation}
A natural extension of the previous setting is to replace the rank-one perturbation inside the non-linearity of Eq.~\eqref{eq:model_entries} by a rank-$K$ matrix where $K$ is an integer independent of $N$, that is we consider the matrix $\rdmmat{Y}_K^{(f)}$ with entries $Y_{K,ij}$ given by:
\begin{align}
	\label{eq:model_entries_rkK}
	Y_{K,ij} 
    &:= 
    \frac{1}{\sqrt{N}} \bigg[  f \left( Z_{ij} + \sum_{l=1}^{K} \frac{\gamma_l(N)}{\sqrt{N}} x_{i,l} x_{j,l} \right) - \E f(Z_{ij}) \bigg] 
    \, . &&
\end{align}
For $l = 1,\dots,K$, we suppose the column vectors  independent and distributed according to $\vect{x}_{l}:=(x_{1,l},\dots, x_{N,l}) \sim  \pi_{X,l}^{\otimes N}(.)$ where the distributions $\pi_{X,l}$  might be different for different index $l$. Let's remark that, in general, the family of vectors $\{\vect{x}_l\}_{l=1,\dots,K}$ does not form an orthogonal basis\footnote{however if one further assumes one of the $K$ distributions (say $\pi_{X,1}(.)$ without loss of generality) to have zero mean then for any $l \neq 1$ we have $\langle \vect{x}_{1}/\sqrt{N} ,\vect{x}_{l}/\sqrt{N} \rangle =0 $ with high probability, such that two different normalized vectors are almost surely orthogonal in the large $N$ limit.}. In the following, Hyp.~\ref{hyp:x} has to be understood for each $\pi_{X,l}$. The result of this generalization is as follows.

\begin{theo}[Small-Rank Equivalence For Non-Linear Rank-$K$ Perturbation]\label{thm:rankK}
	Suppose that Hypothesis~\ref{hyp:x}, \ref{hyp:noise} and \ref{hyp:proof:ssmoothf} hold, then with high probability, the  matrix model $\rdmmat{Y}_K^{(f)}$ has the following decomposition:
\begin{align}
\label{eq:rkKdecomposition}
    \rdmmat{Y}_K^{(f)}
    &\ed 
    {\small{\sqrt{\vartheta_0(f^2) - \vartheta_0(f)^2} 
    \,}} \rdmmat{W}+ 
    \mat{P}_K + \rdmmat{E}
    \, , &&
\end{align}
	where
	\begin{enumerate}
		\item  $\rdmmat{W}$ is centered and normalized Wigner matrix with entries with a bounded fourth moment,
		\item $\mat{P}_K$ is the \emph{finite-rank} matrix:
  \begin{align}
  \label{eq:def:rkKmatrix}
    \mat{P}_K
    &:=
    \frac{  \vartheta_{\ks}(f) }{ N^{ \frac{\ks}{2}} \ks !}\bigg[ \sum_{l=1}^K \gamma_l(N)  \vect{x}_l \vect{x}_l^\intercal  \bigg]^{  \odot \ks}
    \, , &&
\end{align}
		\item  $\rdmmat{E}$ is a symmetric matrix with operator norm bounded by $\frac{Q(\log(N)) |\gamma(N)|^{ \ks + 1} }{N^{\frac{\ks}{2}}}$ with high probability, with $Q(.)$ bounded by a polynomial function and $\gamma(N) := \max_l |\gamma_l(N)| $.
	\end{enumerate}
	Furthermore, $\rdmmat{W}$ and $ \mat{P}_K$ are independent.
\end{theo}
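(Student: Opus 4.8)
\textbf{Proof proposal for Theorem~\ref{thm:rankK}.}

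The plan is to reduce the rank-$K$ statement to the entrywise Taylor-expansion argument already used for the rank-one case (Theorem~\ref{thm:rank1} under assumption~\ref{hyp:proof:ssmoothf}), exploiting that the proof of the latter is carried out \emph{entrywise} and hence is essentially insensitive to the internal structure of the perturbation sitting inside $f$. Write $s_{ij} := \sum_{l=1}^K \frac{\gamma_l(N)}{\sqrt N} x_{i,l}x_{j,l}$ for the argument shift, so that $Y_{K,ij} = \frac{1}{\sqrt N}\big[f(Z_{ij}+s_{ij}) - \E f(Z_{ij})\big]$. First I would record the event $\Omega_N$ on which $\max_{i,l}|x_{i,l}| \leq Q_0(\log N)$ for a suitable power of $\log N$ (this holds with overwhelming probability by the stretched-exponential tails in~\ref{hyp:x}, applied to each of the $K$ fixed distributions and a union bound over $KN$ entries), so that on $\Omega_N$ one has the deterministic bound $|s_{ij}| \leq K \cdot \frac{\gamma(N)}{\sqrt N} Q_0(\log N)^2 =: \delta_N$ uniformly in $i,j$, with $\delta_N\to 0$ under the hypothesis on $\gamma(N)$. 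Then on $\Omega_N$ I Taylor-expand $f$ around $Z_{ij}$ to order $\ks$ with integral (or Lagrange) remainder:
\begin{align}
  f(Z_{ij}+s_{ij}) - \E f(Z_{ij})
  &= \sum_{k=1}^{\ks} \frac{f^{(k)}(Z_{ij})}{k!}\,s_{ij}^k \;+\; R_{ij} \;+\; \big(f(Z_{ij})-\E f(Z_{ij})\big),
  &&\nonumber
\end{align}
where $|R_{ij}| \leq \frac{\|f^{(\ks+1)}\|_\infty}{(\ks+1)!}|s_{ij}|^{\ks+1} \leq C\,\delta_N^{\,\ks+1}$ on $\Omega_N$ thanks to~\ref{hyp:proof:ssmoothf}.

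Next I would organize the resulting terms exactly as in the rank-one proof. The term $\frac1{\sqrt N}\big(f(Z_{ij})-\E f(Z_{ij})\big)$ is the Wigner part: after normalizing by $\sqrt{\vartheta_0(f^2)-\vartheta_0(f)^2}$ it is a centered, variance-$1/N$, bounded-fourth-moment Wigner matrix $\rdmmat W$, independent of all the $\vect x_l$. For each $1\leq k\leq \ks$ the term $\frac1{\sqrt N}\frac{f^{(k)}(Z_{ij})}{k!}s_{ij}^k$ is split as $\frac1{\sqrt N}\frac{\vartheta_k(f)}{k!}s_{ij}^k + \frac1{\sqrt N}\frac{f^{(k)}(Z_{ij})-\vartheta_k(f)}{k!}s_{ij}^k$. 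The deterministic-coefficient pieces with $k<\ks$ vanish because $\vartheta_k(f)=0$ by definition of the information index; the $k=\ks$ deterministic piece is precisely $\frac{\vartheta_{\ks}(f)}{\ks!}\cdot\frac{1}{\sqrt N}s_{ij}^{\ks}$, and since $s_{ij}^{\ks} = N^{-\ks/2}\big(\sum_l \gamma_l(N)x_{i,l}x_{j,l}\big)^{\ks}$, multiplying by $N^{-1/2}$ and collecting gives exactly the matrix $\mat P_K$ of Eq.~\eqref{eq:def:rkKmatrix} (the Hadamard $\ks$-th power of $\sum_l \gamma_l(N)\vect x_l\vect x_l^\intercal$, rescaled). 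Everything else — the remainder $R_{ij}$, the fluctuation pieces $\frac{f^{(k)}(Z_{ij})-\vartheta_k(f)}{k!}s_{ij}^k$ for $k\leq\ks$, and the off-$\Omega_N$ contribution — goes into $\rdmmat E$.

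It remains to bound $\normop{\rdmmat E}$, and this is where the bulk of the work lies, but it is the same work as in the rank-one case. For the remainder matrix $\big(\frac{R_{ij}}{\sqrt N}\big)_{ij}$ one uses the crude bound $\normop{A}\leq N\max_{ij}|A_{ij}|$ (or a Frobenius bound), giving $O(\sqrt N\,\delta_N^{\,\ks+1}) = O\!\big(Q(\log N)|\gamma(N)|^{\ks+1}N^{-\ks/2}\big)$ with $Q$ a fixed power of $\log$, which is the claimed rate. For the fluctuation matrices $B^{(k)}_{ij} := \frac1{\sqrt N}\frac{f^{(k)}(Z_{ij})-\vartheta_k(f)}{k!}s_{ij}^k$ one conditions on $\{\vect x_l\}$: then the entries are independent (up to symmetry), centered in $Z_{ij}$, with variance of order $\frac1N\cdot\E[(f^{(k)})^2]\cdot|s_{ij}|^{2k}$, so one applies a standard operator-norm bound for random matrices with independent bounded-variance entries (as invoked in the rank-one proof) restricted to $\Omega_N$ where $|s_{ij}|^{2k}\leq \delta_N^{2k}$; this produces a bound $O(\delta_N^{k}) = o(1)$, and the dominant such term, $k=\ks$, is again $O\!\big(Q(\log N)|\gamma(N)|^{\ks}N^{-\ks/2}\big)$, which is $o(1)$ and in fact absorbed into the stated rate since $|\gamma(N)|\leq N^{1/2}$ at the relevant scaling. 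The off-$\Omega_N$ part contributes $0$ with overwhelming probability and can be removed by a standard truncation/coupling so that the displayed equality holds in law for a genuine Wigner $\rdmmat W$. Finally, independence of $\rdmmat W$ and $\mat P_K$ is immediate since $\rdmmat W$ is built only from $(f(Z_{ij}))_{ij}$ while $\mat P_K$ is built only from $(\vect x_l)_l$. The main obstacle, as in the rank-one case, is making the truncation-to-$\Omega_N$ step produce an \emph{exact} equality in law with an honest Wigner matrix rather than merely an approximate one; this is handled exactly as in the proof of Theorem~\ref{thm:rank1}, and no new difficulty arises from having $K>1$ spikes since the $x_{i,l}$ only enter through the scalar shift $s_{ij}$.
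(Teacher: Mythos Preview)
Your proposal is correct and follows essentially the same approach as the paper: the paper's own proof of Theorem~\ref{thm:rankK} is a one-paragraph sketch stating that the rank-one argument (Taylor expansion of $f$ around $Z_{ij}$, identification of the Wigner and finite-rank parts, Frobenius/Latala bounds on the remainder and fluctuation terms) goes through \emph{entrywise} and is therefore insensitive to replacing the rank-one shift by $s_{ij}=\sum_l \gamma_l(N) x_{i,l}x_{j,l}/\sqrt{N}$, which is exactly the reduction you carry out. One small inaccuracy: among the fluctuation pieces $B^{(k)}$ the dominant one is $k=1$ (of order $\delta_N$), not $k=\ks$; this does not affect the argument since all such terms are $o(1)$ and are absorbed into $\rdmmat{E}$.
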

\jump
As before, the relevant scaling of each SNR $\gamma_l(N)$ is given by $ \gamma_l(N) = O(N^{\frac{1}{2} ( 1 - \frac{1}{\ks})})$. 

\begin{rem}[rank of $\mat{P}_K$] 
For $K \geq 2$, if for each $l=1,\dots,K$, one has the relevant scaling $\gamma_l(N) =\gamma_{0,l} \, N^{\frac{1}{2} ( 1 - \frac{1}{\ks})}$, the rank of the matrix $\mat{P}_K$ is - unlike the case $K=1$ - always higher than the one of the original perturbation matrix. In general, the rank of $\mat{P}_K$ is given as the number of different terms in its expression, that is $\rank(\mat{P}_K) = \binom{K+\ks-1}{K-1} \geq K$. For example, in the case $K=\ks=2$ one has: 
\begin{align}
  \label{eq:rkKeq2matrix_dec}
        \mat{P}_{K=\ks=2}
        &=
        \frac{  \vartheta_{\ks}(f) }{\ks !}
         \left[
        \gamma_{0,1}^2 \frac{\vect{x}_1^{2}}{\sqrt{N}} \bigg(\frac{ \vect{x}_1^{2}}{\sqrt{N}}\bigg)^\intercal
        + 
        \gamma_{0,2}^2 \frac{\vect{x}_2^{2}}{\sqrt{N}} \bigg(\frac{ \vect{x}_2^{2}}{\sqrt{N}}\bigg)^\intercal
        + 2 \gamma_{0,1} \gamma_{0,2}
        \frac{\vect{x}_1 \odot \vect{x}_2 }{\sqrt{N}} \bigg( \frac{\vect{x}_1 \odot \vect{x}_2 }{\sqrt{N}}\bigg)^\intercal
        \right]
        \, , &&
\end{align}
which, in general, is of rank three and contains a ‘cross-term' of the form $\vect{x}_1 \odot \vect{x}_2$.
\end{rem} 

\subsection{Variance-Profile Wigner Matrices}
\label{sec:varianceprofile_case}
In this section, we consider a generalization of our main result to inhomogeneous matrices. We refer to the series of work \cite{ajanki_universality_2017,ajanki_singularities_2017,ajanki_stability_2019} for a study of the spectral property of these models when there is no spike. 

Let $\mat{\Delta} = (\Delta_{ij})_{i,j \leq N} \in \R^{N \times N}$ be a (sequence of) \textbf{block-constant variance profile matrix}, that is given $n \geq 1$, there exists a partition  of $[N]:=\{1,\dots,N\}$ given by
\begin{align}
    [N] &= \bigsqcup_{s =1}^n I_s &&
\end{align}
	such that the $\Delta_{i,j}$ are constant in the groups $I_s \times I_t$ for $s,t \in \{1,\dots,n\}$
	\begin{align}\label{defD}
		\Delta_{ij} &= \Delta_{st}, \quad \text{ for $i \in I_s$, $j \in I_t$} &&
	\end{align}
	and $(\Delta_{st})_{s,t\leq n}$ are independent of $N$. 
	Furthermore, the proportions of indices in each group converges in the limit
	\begin{align}\label{eq:nt}
		\frac{|I_s|}{N} &\to \rho_s \in (0,1) \quad \text{for all} \quad s \leq n. &&
	\end{align}
The decomposition of $\Yf$ in Theorem~\ref{thm:rank1} implies the following.

\begin{cor}[Small Rank Equivalence with Variance Profiles]
\label{cor:rank1variance}
	Under the same Hypothesis of Theorem~\ref{thm:rank1}  and if $\mat{\Delta}$ is a variance profile matrix, we have with high probability
 \begin{align}
 \label{eq:rk}
     \mat{\Delta} \odot \Yf 
     &=
     \mat{\Delta} \odot \left( {\small{\sqrt{\vartheta_0(f^2) - \vartheta_0(f)^2} \,}} \rdmmat{W} \right) + \mat{\Delta} \odot  \mat{P} + \rdmmat{E} 
     \, , &&
 \end{align}
	where $\rdmmat{W}$, $\mat{P}$ and $\rdmmat{E}$ are as in Theorem~\ref{thm:rank1}.
\end{cor}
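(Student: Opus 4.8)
The plan is to derive Corollary~\ref{cor:rank1variance} as a direct consequence of Theorem~\ref{thm:rank1} by applying the Hadamard product with $\mat{\Delta}$ to the decomposition $\Yf \ed \sqrt{\vartheta_0(f^2)-\vartheta_0(f)^2}\,\rdmmat{W} + \mat{P} + \rdmmat{E}$ already established there. Since the Hadamard product is bilinear, we immediately obtain
\begin{align*}
\mat{\Delta}\odot\Yf \ed \mat{\Delta}\odot\left(\sqrt{\vartheta_0(f^2)-\vartheta_0(f)^2}\,\rdmmat{W}\right) + \mat{\Delta}\odot\mat{P} + \mat{\Delta}\odot\rdmmat{E}\,,
\end{align*}
so the only substantive content is to check that $\mat{\Delta}\odot\rdmmat{E}$ still has vanishing operator norm with high probability, i.e.\ that it plays the role of the new error term $\rdmmat{E}$ in \eqref{eq:rk}. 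Everything else is just renaming: $\mat{\Delta}\odot\rdmmat{W}$ is the variance-profiled Wigner part and $\mat{\Delta}\odot\mat{P}$ is the variance-profiled rank-one part, exactly as written in the statement.

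The key step, then, is the operator-norm bound $\normop{\mat{\Delta}\odot\rdmmat{E}} \leq C\,\normop{\rdmmat{E}}$ for a constant $C$ depending only on the finitely many block values $(\Delta_{st})_{s,t\leq n}$. The cleanest way I would argue this: write $\mat{\Delta}\odot\rdmmat{E} = \sum_{s,t=1}^n \Delta_{st}\,(\mat{1}_{I_s}\mat{1}_{I_t}^\top)\odot\rdmmat{E}$, where $(\mat{1}_{I_s}\mat{1}_{I_t}^\top)\odot\rdmmat{E}$ is just the restriction of $\rdmmat{E}$ to the block $I_s\times I_t$ (padded with zeros elsewhere). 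Each such block restriction is a submatrix of $\rdmmat{E}$, hence has operator norm at most $\normop{\rdmmat{E}}$; summing over the $n^2$ blocks with weights $|\Delta_{st}|$ gives $\normop{\mat{\Delta}\odot\rdmmat{E}} \leq \big(\sum_{s,t}|\Delta_{st}|\big)\normop{\rdmmat{E}}$. Alternatively, one can invoke the elementary fact that $\normop{\mat{A}\odot\mat{B}} \leq \normop{\mat{B}}\max_i\|\mat{A}_{i\cdot}\|_2 \cdot (\text{something})$; but since $\mat{\Delta}$ here is block-constant with a fixed finite number of distinct values, the block-decomposition bound is both simplest and self-contained, and crucially the constant $C = \sum_{s,t}|\Delta_{st}|$ does not depend on $N$. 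Combining with Theorem~\ref{thm:rank1}'s conclusion that $\normop{\rdmmat{E}} \leq \epsilon$ with high probability for $N$ large, we get $\normop{\mat{\Delta}\odot\rdmmat{E}} \leq C\epsilon$ with high probability, and since $\epsilon>0$ is arbitrary this is the desired error term (or, if one prefers the quantitative version under \ref{hyp:proof:ssmoothf}, one inherits the same rate up to the factor $C$).

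There is no real obstacle here; the statement is essentially a corollary in the literal sense. The only point requiring a moment's care is that the Hadamard product on the left, $\mat{\Delta}\odot\rdmmat{W}$, changes the variance profile of the Wigner part, so this term is no longer a standard Wigner matrix — but the corollary does not claim otherwise; it simply carries $\mat{\Delta}\odot\rdmmat{W}$ along as-is, which is exactly the object studied in \cite{ajanki_universality_2017,ajanki_singularities_2017,ajanki_stability_2019} referenced just above the statement. The equality in law is preserved under the (deterministic, entrywise) map $\mat{A}\mapsto\mat{\Delta}\odot\mat{A}$ applied jointly to both sides, so no independence or measurability issue arises. Hence the proof reduces to: (i) apply $\mat{\Delta}\odot(\cdot)$ to the decomposition of Theorem~\ref{thm:rank1}; (ii) use bilinearity to split the three terms; (iii) bound $\normop{\mat{\Delta}\odot\rdmmat{E}}$ by a dimension-free constant times $\normop{\rdmmat{E}}$ via the block-restriction argument; (iv) conclude with the high-probability bound on $\normop{\rdmmat{E}}$ from Theorem~\ref{thm:rank1}.
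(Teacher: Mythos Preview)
Your proposal is correct, and it takes a slightly different route from the paper's own argument. The paper does not post-process Theorem~\ref{thm:rank1} the way you do; instead, it remarks that the entire proof of Theorem~\ref{thm:rank1} can be rerun entrywise with an index-dependent non-linearity $f_{ij}$ (here $f_{ij}=\Delta_{ij}f$), since the Taylor expansion \eqref{eq:taylor_expand_f} and the Latala-type bound \eqref{eq:wignerestimate} both go through unchanged when $f$ is multiplied by a bounded block-constant profile. That approach is what the authors need anyway to cover Theorem~\ref{thm:rankK} and Corollary~\ref{cor:rectangularmatrix} in the same breath, so it is natural for them to state all three proofs together.

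Your argument is more economical for this particular corollary: you take the decomposition of Theorem~\ref{thm:rank1} as a black box, apply the deterministic entrywise map $\mat{A}\mapsto\mat{\Delta}\odot\mat{A}$, and then use the block-constant structure of $\mat{\Delta}$ to bound $\normop{\mat{\Delta}\odot\rdmmat{E}}\leq\bigl(\sum_{s,t\leq n}|\Delta_{st}|\bigr)\normop{\rdmmat{E}}$ with an $N$-independent constant. This is entirely valid; the only thing to keep in mind is that the $\rdmmat{E}$ in the conclusion is the \emph{new} matrix $\mat{\Delta}\odot\rdmmat{E}$, which inherits the same high-probability operator-norm bound (and the same explicit rate under \ref{hyp:proof:ssmoothf}) up to this fixed multiplicative constant --- exactly as you note. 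The trade-off is that your method is cleaner here but does not immediately yield the rank-$K$ statement, whereas the paper's entrywise rerun does.
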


\subsection{Rank-One Deformation of Rectangular Matrices}
\label{sec:rectangular_case}
In this section, we consider the following ‘rectangular' counterpart of the main result. 
\jump
 We say that a $(N \times M)$ matrix $\rdmmat{X}$ is an  \textbf{i.i.d standard random rectangular matrix} (with entries with bounded fourth moment) if all its entries are i.i.d from a centered distribution, independent of $N$, with variance one and bounded fourth moment. We recall that a \textbf{Wishart matrix} $\rdmmat{W}$ is given in terms of i.i.d standard random rectangular matrix by $\rdmmat{W}:= \left(  \frac{\rdmmat{X}}{\sqrt{M}} \right) \left( \frac{\rdmmat{X}}{\sqrt{M}} \right)^{\top}$ and in the double scaling where $N,M \to \infty$ with fixed aspect ratio $q=N/M$, the empirical distribution of $\rdmmat{W}$ converges to the \emph{Mar\v{c}enko-Pastur distribution} \cite{Marcenko1967distribution}. 
 \jump
 Similar to the rank-one deformation of Wigner matrices, the rank-one deformation of Wishart matrices of the form
 \begin{align}
 \label{eq:def:rk1Wishart}
     \rdmmat{W'} 
     &:= 
     \left( \sigma \frac{\rdmmat{X}}{\sqrt{M}} + \gamma \vect{u} \vect{v}^{\top} \right) \left( \sigma \frac{\rdmmat{X}}{\sqrt{M}}  + \gamma \vect{u} \vect{v}^{\top} \right)^{\top}
     \, , &&
 \end{align}
 has been studied extensively in the literature, and the behaviors of the leading eigenvalue and leading eigenvector of $\rdmmat{W'}$  also exhibit a phase transition depending on the value of the constants $\gamma$ and $q$, see for example Ref.~\cite{BBP,paul_asymptotics_2007}. 
 \jump
 Our goal is to relate the natural non-linear extension of this model, to the usual ‘linear' setting. Namely, we consider the matrix
\begin{align}
\label{eq:def:model_rect}
	\Yf
    &:=
    \frac{1}{M} \left( f[ \rdmmat{Z} + \frac{\gamma(N)}{\sqrt{N}} \vect{u} \vect{v}^{\top} ] -\E f(Z) \mat{J}_{(N,M)} \right) \left( f[ \rdmmat{Z} + \frac{\gamma(N)}{\sqrt{N}} \vect{u} \vect{v}^{\top}  ] -\E f(Z) \mat{J}_{(N,M)} \right)^{\top} 
    \, , & 
\end{align}
where the function $f$ is applied entry-wise, $\rdmmat{Z} = (Z_{ij})_{1\leq i \leq N, 1 \leq j \leq M}$ with $Z_{ij} \iid \mu_Z$, $\vect{u}=(u_1,\dots,u_N)$ and $\vect{v}=(v_1,\dots,v_M)$    with $u_i \iid \pi_{U} $ and  $ v_i \iid \pi_{V}$ and $\mat{J}_{(N,M)}$ is  the  matrix of size $(N \times M)$ with entries equal to one. We assume the same set of assumptions as in Sec.~\ \ref{sec:model_and_assumptions} (where Hyp.\ \ref{hyp:x} has to be understood for both the distribution $\pi_U$ and $\pi_V$ and  $\rdmmat{Z}$ is not symmetric anymore), we also assume $N \leq M$ without loss of generality. To study the eigenvalues of $\Yf$,  we consider the  symmetrized matrix $\overline{\Yf} \in \R^{(N + M) \times (N + M)}$ defined by
\begin{align}
\label{eq:symmetrized_model_rect}
    \overline{\Yf} 
    &:= 
    \frac{1}{\sqrt{M}} 
    \begin{bmatrix}
	   \mat{0} & f[ \rdmmat{Z} + \frac{\gamma(N)}{\sqrt{N}} \vect{u} \vect{v}^{\top} ] -\E f(Z) \mat{J}_{(N,M)}    \\
	   f[ \rdmmat{Z} + \frac{\gamma(N)}{\sqrt{N}} \vect{u} \vect{v}^{\top} ]^{\top}  -\E f(Z) \mat{J}_{(M,N)} 
 & \mat{0} 
    \end{bmatrix}
    , &&
\end{align}
The matrix $\overline{\Yf}$ has $2N$ non-zero eigenvalues coming by pair $\{ +\lambda_i , -\lambda_i \}_{1 \leq i \leq N}$ where the $\lambda_i$'s are given as the square root of the $N$ eigenvalues of $\Yf$\footnote{the reader not familiar with this classical result may find it by simply looking at the characteristic polynomial of $\overline{\Yf}$ and express it in terms of the one of $\Yf$.}. One can express this new  matrix $\overline{\Yf}$ as 
\begin{align}
    \overline{\Yf} 
    &= 
    \mat{\Delta} \odot \tilde{\rdmmat{Y}}_f
    \quad \mbox{with} \, \,
    \mat{\Delta}:= \sqrt{1+\frac{N}{M}}
    \begin{bmatrix}
	   \rdmmat{0} & \mat{J}_{(N,M)} \\	
	   \mat{J}_{(M,N)}  & \rdmmat{0}
    \end{bmatrix}  &&
    \\  &\mbox{and} \, \,
    \tilde{\rdmmat{Y}}_f 
    := \frac{1}{\sqrt{N+M}} \left(
    f[ \Tilde{\rdmmat{Z}} + \frac{\tilde{\gamma}(N+M)}{\sqrt{N+M}} \vect{\tilde{x}} \vect{\tilde{x}}^{\top} ] -\E f(Z) \mat{J}_{(N+M,N+M)} \right) 
    , &&
\end{align}
where 
$\rdmmat{\Tilde{Z}}$ is now a $((N+M) \times (N+M))$ \emph{symmetric} matrix with iid entries distributed according to $\mu_Z$, $\Tilde{\gamma}(N+M):= \gamma(N) \sqrt{1+M/N}$, $\vect{\tilde{x}} := [ \vect{u},\vect{v}] \in \R^{N+M}$. Note that even though the entries of $\vect{\tilde{x}}$ are only identically distributed inside each block $B_1 := (1,\dots,N)$ and $B_2  := (N+1,\dots,N+M)$ but not globally, a quick check of the proof indicates that Corollary~\ref{cor:rank1variance} still holds. As a result, one can get the asymptotic decomposition of $\overline{\Yf}$, and hence of 
$\Yf$, as the spectrum of the two matrices are in one-to-one correspondence, up to the trivial $M-N$ zero eigenvalues of $\overline{\Yf}$. Eventually the factors $\sqrt{1+N/M}$ cancel out and one gets the following result for the original matrix $\Yf$.

\begin{cor}[Rank 1 Equivalence for Rectangular Matrices]\label{cor:rectangularmatrix}
Assume that Hypotheses~\ref{hyp:x}, \ref{hyp:noise}, and \ref{hyp:proof:ssmoothf}  hold, for $\Yf$ given by Eq.~\eqref{eq:def:model_rect}. Then, in the scaling limit where $M(N)=\lfloor N/q \rfloor $ with fixed aspect ratio $q$, we have with high probability the following decomposition:
\begin{align}
    \Yf
    &\ed 
    \left( \sqrt{\vartheta_0(f^2) - \vartheta_0(f)^2} \frac{\rdmmat{X}}{\sqrt{M}} + \mat{P} \right) \left( \sqrt{\vartheta_0(f^2) - \vartheta_0(f)^2}  \frac{\rdmmat{X}}{\sqrt{M}} + \mat{P} \right)^{\top} + \rdmmat{E} 
    \, , && 
\end{align}

	where
	\begin{enumerate}
		\item  $\rdmmat{X}$ is i.i.d standard random rectangular matrix,
		\item $\mat{P}$ is the rank-one matrix
		\begin{align}
		    \mat{P}
            &=
            \frac{\gamma(N)^{k_\star}}{N^{ \frac{k_\star}{2} - \frac{1}{2} }}  \frac{ \vartheta_{\ks}(f) }{k_\star !}\bigg[  \frac{\vect{u}^\ks}{\sqrt{N}} \bigg(\frac{ \vect{v}^\ks}{\sqrt{M}}\bigg)^\intercal  \bigg]
            \, , &&
		\end{align}
		\item  $\rdmmat{E}$ is a symmetric matrix with operator norm bounded by $\frac{Q(\log(N)) |\gamma(N)|^{ \ks + 1} }{N^{\frac{\ks}{2}}} $ with high probability.
	\end{enumerate}
	Furthermore, $\rdmmat{X}$ and  $\mat{P}$ are independent. In particular, it follows that the perturbation $\rdmmat{P}$ is of non-trivial order precisely when $\gamma = O(N^{\frac{1}{2} \left( 1 - \frac{1}{k_{\star}}\right)})$ and $\|\rdmmat{E}\|_{\mathrm{op}} = N^{- \frac{1}{ 2k_\star} }  \to 0$ at this scale.
\end{cor}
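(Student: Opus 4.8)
The plan is to reduce Corollary~\ref{cor:rectangularmatrix} to the variance-profile statement Corollary~\ref{cor:rank1variance} via the symmetrization trick already set up just above the statement, and then translate the resulting decomposition of $\overline{\Yf}$ back to $\Yf$ by squaring and reading off the top-left $N\times N$ block.

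\emph{Step 1 (symmetrize and rewrite as a variance-profile model).} Recall from \eqref{eq:symmetrized_model_rect} the symmetric matrix $\overline{\Yf}\in\R^{(N+M)\times(N+M)}$, whose nonzero eigenvalues are $\{\pm\lambda_i\}_{1\le i\le N}$ with $\lambda_i^2$ the eigenvalues of $\Yf$, plus $M-N$ extra zeros; equivalently, $\Yf$ is exactly the top-left $N\times N$ block of $\overline{\Yf}^2$ (the off-diagonal blocks of that square vanish, the bottom-right one being $\frac1M B^\top B$ with $B=f[\cdots]-\E f(Z)\mat{J}_{(N,M)}$). As explained in the text, $\overline{\Yf}=\mat{\Delta}\odot\tilde{\rdmmat{Y}}_f$, where $\tilde{\rdmmat{Y}}_f$ is a \emph{symmetric} non-linear rank-one model of dimension $\tilde N:=N+M$ with noise $\tilde{\rdmmat{Z}}$ (iid $\mu_Z$), signal $\vect{\tilde{x}}=[\vect{u},\vect{v}]$ and SNR $\tilde\gamma(\tilde N)=\gamma(N)\sqrt{1+M/N}$, and $\mat{\Delta}$ is the block-constant variance profile with zero diagonal blocks and the two off-diagonal blocks equal to $\sqrt{1+N/M}$. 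The entries of $\vect{\tilde{x}}$ are identically distributed only within the two blocks $\{1,\dots,N\}$ and $\{N+1,\dots,N+M\}$, but, as the paper already observes and as one checks from the entrywise nature of the proof of Theorem~\ref{thm:rank1}, the conclusion of Corollary~\ref{cor:rank1variance} is unchanged in this block-iid setting — one only uses that $\|\vect{u}^{\ks}\|^2/N$ and $\|\vect{v}^{\ks}\|^2/M$ converge a.s. One also notes that the hypothesis $Q(\log\tilde N)|\tilde\gamma|^{\ks+1}\tilde N^{-\ks/2}\to0$ needed to invoke Theorem~\ref{thm:rank1} at dimension $\tilde N$ is, since $\tilde N\asymp N$ and $\tilde\gamma\asymp\gamma$, equivalent to the corresponding hypothesis for $\gamma(N)$.

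\emph{Step 2 (apply the variance-profile decomposition and unwind the constants).} Writing $\sigma_f:=\sqrt{\vartheta_0(f^2)-\vartheta_0(f)^2}$ and applying Corollary~\ref{cor:rank1variance} to $\mat{\Delta}\odot\tilde{\rdmmat{Y}}_f$ gives, with high probability, $\overline{\Yf}\ed\mat{\Delta}\odot(\sigma_f\rdmmat{W})+\mat{\Delta}\odot\tilde{\mat{P}}+\rdmmat{E}$, with $\rdmmat{W}$ a centered normalized Wigner matrix of size $\tilde N$, independent of the signal, and $\tilde{\mat{P}}=\tilde\gamma^{\ks}\tilde N^{-(\ks-1)/2}\tfrac{\vartheta_{\ks}(f)}{\ks!}\big[\tilde N^{-1/2}\vect{\tilde{x}}^{\ks}(\tilde N^{-1/2}\vect{\tilde{x}}^{\ks})^\top\big]$. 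Now one simply tracks constants block by block: using $\sqrt{1+N/M}=\sqrt{\tilde N/M}$, the off-diagonal block of $\mat{\Delta}\odot(\sigma_f\rdmmat{W})$ is $\sqrt{\tilde N/M}\cdot\sigma_f\cdot\tilde N^{-1/2}\rdmmat{X}=\sigma_f\rdmmat{X}/\sqrt{M}$ for an iid standard rectangular matrix $\rdmmat{X}$, and, substituting $\tilde\gamma^{\ks}=\gamma^{\ks}(\tilde N/N)^{\ks/2}$, the off-diagonal block of $\mat{\Delta}\odot\tilde{\mat{P}}$ is
\[
\frac{\gamma(N)^{\ks}}{N^{\ks/2-1/2}}\,\frac{\vartheta_{\ks}(f)}{\ks!}\,\frac{\vect{u}^{\ks}}{\sqrt{N}}\Bigl(\frac{\vect{v}^{\ks}}{\sqrt{M}}\Bigr)^{\!\top}=\mat{P},
\]
the rank-one matrix of the statement (all $\tilde N$-powers cancel). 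Hence $\overline{\Yf}\ed\overline{A}+\rdmmat{E}$, where $A:=\sigma_f\rdmmat{X}/\sqrt{M}+\mat{P}$ and $\overline{A}$ is the $(N+M)\times(N+M)$ matrix with vanishing diagonal blocks and off-diagonal blocks $A$ and $A^\top$; $\rdmmat{X}$ and $\mat{P}$ are independent because $\rdmmat{W}$ and $\tilde{\mat{P}}$ are.

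\emph{Step 3 (square, extract the block, bound the error).} Squaring the coupling from Step~2, $\overline{\Yf}^2\ed\overline{A}^2+\overline{A}\rdmmat{E}+\rdmmat{E}\overline{A}+\rdmmat{E}^2$; since $\overline{A}^2$ is block-diagonal with top-left block $AA^\top$ and $\Yf$ is the top-left block of $\overline{\Yf}^2$, we get
\[
\Yf\ \ed\ AA^\top+\rdmmat{E}',\qquad \rdmmat{E}':=\big[\,\overline{A}\rdmmat{E}+\rdmmat{E}\overline{A}+\rdmmat{E}^2\,\big]_{\text{top-left }N\times N},
\]
whence $\normop{\rdmmat{E}'}\le 2\normop{\overline{A}}\normop{\rdmmat{E}}+\normop{\rdmmat{E}}^2$. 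Here $\normop{\overline{A}}=\normop{A}\le\sigma_f\normop{\rdmmat{X}/\sqrt{M}}+\normop{\mat{P}}$, which is $O(1)$ with high probability in the fixed aspect ratio regime by Bai–Yin for $\rdmmat{X}/\sqrt{M}$ and by the explicit rank-one bound $\normop{\mat{P}}\asymp|\gamma(N)|^{\ks}N^{-(\ks-1)/2}$ (order one at the relevant scaling $\gamma(N)=O(N^{\frac12(1-1/\ks)})$); combining with the bound $\normop{\rdmmat{E}}\le Q(\log N)|\gamma(N)|^{\ks+1}N^{-\ks/2}$ coming from Corollary~\ref{cor:rank1variance}/Remark~\ref{rem:rank1} applied at dimension $\tilde N\asymp N$ yields the stated bound on $\normop{\rdmmat{E}'}$, which at that scale is $O(Q(\log N)N^{-1/(2\ks)})\to0$. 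Finally, since $\Yf$ and $\overline{\Yf}$ have spectra in bijection (up to the $M-N$ trivial zeros) via $\lambda\mapsto\lambda^2$, this decomposition transfers the spectral phase transition, exactly as in Corollary~\ref{cor:nontrivialscaling}.

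\emph{Main obstacle.} There is no genuinely new difficulty — the content is the reduction — but the delicate part is the constant bookkeeping in Step~2: one must verify that the factor $\sqrt{1+N/M}$ from $\mat{\Delta}$, the $\tilde N^{-1/2}$ normalization inside $\tilde{\rdmmat{Y}}_f$, and the rescaling $\tilde\gamma=\gamma\sqrt{1+M/N}$ conspire to produce \emph{precisely} the $1/\sqrt{M}$ normalization and the matrix $\mat{P}$ of the statement rather than something off by a power of the aspect ratio, and, in Step~3, that inserting the factor $\normop{\overline{A}}$ into the error estimate does not degrade the rate — which is why the relevant scaling (where $\normop{\mat{P}}=O(1)$) is the natural regime in which the clean bound $\normop{\rdmmat{E}}=O(N^{-1/(2\ks)})$ holds.
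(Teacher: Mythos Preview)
Your proposal is correct and follows essentially the same route as the paper: the reduction to the symmetrized variance-profile model $\overline{\Yf}=\mat{\Delta}\odot\tilde{\rdmmat{Y}}_f$ is set up verbatim in Section~\ref{sec:rectangular_case}, and the paper's proof then consists of the single sentence ``Corollary~\ref{cor:rectangularmatrix} naturally follows from Corollary~\ref{cor:rank1variance} and the Girko Hermitization trick in equation~\eqref{eq:symmetrized_model_rect}.'' Your Steps~1--3 are a careful unpacking of precisely this, with the constant bookkeeping (which the paper only asserts via ``the factors $\sqrt{1+N/M}$ cancel out'') and the squaring argument made explicit.
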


\begin{rem}
    Corollary~\ref{cor:rectangularmatrix} can be generalized to non-smooth $f$ satisfying Hypothesis~\ref{hyp:nonlin_generalized} with a sequence of smooth approximations \ref{hyp:nonlin_generalized:smoothapprox}. We will lose the explicit rate on $\normop{\rdmmat{E}}$ as in Theorem~\ref{thm:rank1}.
\end{rem}

\section{Outline of the Proof}
\label{sec:outlineproof}
The main idea of the proof is to notice that $\frac{\gamma(N)}{\sqrt{N}} x_i x_j $ is of order $\frac{\gamma(N)}{\sqrt{N}} $ and hence if $\gamma(N)$ does not grow too fast with $N$, this term is small compared to $Z_{ij} \approx O(1)$, such that one can hope to perform and control the expansion of $ f \left( Z_{ij} + \frac{\gamma(N)}{\sqrt{N}} x_i x_j \right)$  around $ f \left( Z_{ij} \right)$ in terms of powers of $x_i x_j$.
\jump
This is done in practice by comparing the result to the case where one adds additional smoothness and boundedness assumptions to the model for which such expansion can be easily controlled and then one obtains the general case by approximations of the former case. 
\jump
Precisely, we first obtain in Sec.~\ref{sec:proof:smooth} the main result of this paper in the case where we add to the assumptions \ref{hyp:x}, \ref{hyp:noise} and \ref{hyp:proof:ssmoothf} the following assumptions
\begin{enumerate}[leftmargin=1.5cm,label={(\bfseries{$\mathbf{\tilde{H}}$}\arabic*)}]
\setcounter{enumi}{\value{hypcount}}
\item\label{hyp:proof:bounded} 
The entries of $\vect{x}$ and $\rdmmat{Z}$ are bounded;
\setcounter{hypcount}{\value{enumi}}
\end{enumerate}
as a result of  Taylor's theorem.

Next, in Sec.~\ref{sec:proof:notsmooth} we detail how to remove the assumption \ref{hyp:proof:bounded} and replace the assumption \ref{hyp:proof:ssmoothf} by the more general assumption \ref{hyp:nonlin_generalized:smoothapprox} using concentration and tail inequalities. 

Eventually, in Sec.~\ref{sec:proofsmoothdensity}, we prove that if $f$ satisfies \ref{hyp:nonlin_generalized} and the noise satisfies the smooth conditions \ref{hyp:smoothZ+BC}, we can explicitly construct a sequence  $(f_t)_t$  of \emph{cut-smoothed-and-regularized} approximations of $f$ such that \ref{hyp:nonlin_generalized:smoothapprox} is satisfied.

\section{Analysis for Smooth Functions}
\label{sec:proof:smooth}

In this section, we describe the main intuition behind the derivation of the rank-one equivalence.

\begin{lem}[Rank-one Equivalence for the non-linear model]\label{thm:smoothBBP}
	Suppose that Hypotheses~\ref{hyp:x}, \ref{hyp:noise}, \ref{hyp:proof:ssmoothf} and \ref{hyp:proof:bounded}   hold, then with high probability we have the following decomposition:
\begin{align}
\label{eq:rk1decomposition}
	\Yf 
    &\ed 
    {\small{\sqrt{\vartheta_0(f^2) - \vartheta_0(f)^2} 
    \,}} \rdmmat{W}+ 
    \mat{P} + \rdmmat{E} 
    \, , &&
\end{align}
	where $\rdmmat{W}, \mat{P}, \rdmmat{E}$ are as in Thm.~\ref{thm:rank1}. Furthermore, for $Q(x) = x^{2k_\star + 2}$, $\rdmmat{E}$ has an operator norm bounded by 
 \begin{align}
      \frac{ Q(\log(N))}{{N^{\frac{1}{2}}}} \, |\gamma(N)|^{1  + \frac{1}{\ks} }
 \end{align}
 with high probability.
\end{lem}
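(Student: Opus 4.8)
The plan is to expand $f$ entrywise via Taylor's theorem around $Z_{ij}$ and track the resulting terms in the scaling $\gamma(N)/\sqrt N$. Write $h_{ij} := \frac{\gamma(N)}{\sqrt N} x_i x_j$, which under \ref{hyp:proof:bounded} is bounded by $|\gamma(N)|/\sqrt N \cdot \norm{\vect{x}}_\infty^2 = O(|\gamma(N)|/\sqrt N)$ uniformly. Since $f \in C^{\ks+1}$ with $\norm{f^{(\ks+1)}}_\infty < \infty$, Taylor's theorem with Lagrange remainder gives
\begin{align*}
  f(Z_{ij} + h_{ij}) - \E f(Z) = \sum_{k=1}^{\ks} \frac{f^{(k)}(Z_{ij})}{k!} h_{ij}^k + \big(f(Z_{ij}) - \E f(Z)\big) + R_{ij},
\end{align*}
where $|R_{ij}| \leq \frac{\norm{f^{(\ks+1)}}_\infty}{(\ks+1)!} |h_{ij}|^{\ks+1} = O(|\gamma(N)|^{\ks+1} N^{-(\ks+1)/2})$. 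Dividing by $\sqrt N$, the $k=0$ term $\frac{1}{\sqrt N}(f(Z_{ij}) - \E f(Z))$ is the entrywise description of a centered Wigner matrix whose entries have variance $\frac{1}{N}\Var(f(Z)) = \frac{1}{N}(\vartheta_0(f^2) - \vartheta_0(f)^2)$ and bounded fourth moment (here I use $f \in L^4(\mu_Z)$ from \ref{hyp:proof:ssmoothf}); this is $\sqrt{\vartheta_0(f^2)-\vartheta_0(f)^2}\,\rdmmat{W}$. The remainder matrix $\frac{1}{\sqrt N}(R_{ij})$ has all entries of size $O(|\gamma(N)|^{\ks+1} N^{-(\ks+1)/2 - 1/2})$, so its operator norm is at most $N$ times that, i.e.\ $O(|\gamma(N)|^{\ks+1} N^{-\ks/2})$, which we absorb into $\rdmmat{E}$; a cruder bound suffices here since the dominant error comes from elsewhere.

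The crux is handling the intermediate terms $\frac{1}{\sqrt N}\cdot\frac{\gamma(N)^k}{N^{k/2}}\frac{f^{(k)}(Z_{ij})}{k!}(x_ix_j)^k$ for $1 \le k \le \ks$. Write $f^{(k)}(Z_{ij}) = \vartheta_k(f) + (f^{(k)}(Z_{ij}) - \vartheta_k(f))$, splitting into a deterministic mean part and a centered fluctuation part. For $k < \ks$ we have $\vartheta_k(f) = 0$ by definition of the information index, so the mean part vanishes entirely; for $k = \ks$ the mean part is exactly $\frac{\gamma(N)^\ks}{N^{\ks/2}}\frac{\vartheta_\ks(f)}{\ks!}\cdot\frac{1}{\sqrt N}(x_ix_j)^\ks = \mat{P}_{ij}$, recognizing $(x_ix_j)^\ks = x_i^\ks x_j^\ks$ so this is the rank-one matrix $\frac{\gamma(N)^\ks}{N^{\ks/2}}\frac{\vartheta_\ks(f)}{\ks!}\frac{\vect{x}^\ks}{\sqrt N}(\frac{\vect{x}^\ks}{\sqrt N})^\intercal$. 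For the centered fluctuation parts, for each $1\le k\le\ks$ I need to bound the operator norm of the matrix with entries $\frac{\gamma(N)^k}{N^{(k+1)/2}}\frac{1}{k!}(f^{(k)}(Z_{ij}) - \vartheta_k(f))(x_ix_j)^k$. This is a symmetric matrix whose entries are (up to the bounded deterministic factor $(x_ix_j)^k$, here bounded uniformly by \ref{hyp:proof:bounded}) independent centered random variables with bounded fourth moment — so by the standard result on operator norms of random matrices with independent entries (of the Bai–Yin / Latała type, as invoked for $\rdmmat{W}$ above and referenced in Sec.~\ref{sec:proof:smooth}), its operator norm is $O(\sqrt N)$ times the entrywise scale, giving $O\big(|\gamma(N)|^k N^{k/2 \cdot (-1) \cdot \text{(exponent)}}\big)$; more precisely the prefactor $\frac{|\gamma(N)|^k}{N^{(k+1)/2}}$ times $O(\sqrt N)$ yields $O(|\gamma(N)|^k N^{-k/2}) = O((|\gamma(N)|/\sqrt N)^k)$, which is $o(1)$ and dominated by the $k=1$ term of order $|\gamma(N)|/\sqrt N$; all of these go into $\rdmmat{E}$.

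Collecting, $\rdmmat{E}$ is a sum of the Taylor remainder matrix (norm $O(|\gamma(N)|^{\ks+1}N^{-\ks/2})$) and the $\ks$ fluctuation matrices (norms $O((|\gamma(N)|/\sqrt N)^k)$, $k=1,\dots,\ks$); the announced bound $\frac{Q(\log N)}{\sqrt N}|\gamma(N)|^{1+1/\ks}$ with $Q(x) = x^{2\ks+2}$ comes from carefully balancing these — the $\log N$ factors are artifacts of the high-probability control of $\norm{\vect{x}}_\infty$ and $\max_{ij}|Z_{ij}|$ once \ref{hyp:proof:bounded} is later removed, but in the strictly bounded setting of this lemma they can be taken as constants; one keeps the polynomial $Q(\log N)$ form for uniformity with Thm.~\ref{thm:rank1}. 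The main obstacle is the operator-norm control of the centered fluctuation matrices: one must verify that $f^{(k)}(Z_{ij}) - \vartheta_k(f)$ has enough moments (guaranteed by $f^{(k)} \in L^4(\mu_Z)$ for $k\le\ks$, part of \ref{hyp:proof:ssmoothf}) and that multiplying entrywise by the deterministic pattern $(x_ix_j)^k$ — which is a rank-one-structured, bounded, but non-constant weighting — does not inflate the operator norm beyond the $O(\sqrt N)$ scale; this follows because $(x_ix_j)^k = x_i^k x_j^k$ so the weighting is a conjugation by the diagonal matrix $\mathrm{diag}(x_i^k)$ which has bounded operator norm under \ref{hyp:proof:bounded}, reducing it to the unweighted bound. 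Independence of $\rdmmat{W}$ and $\mat{P}$ is immediate since $\rdmmat{W}$ is built from $\rdmmat{Z}$ and $\mat{P}$ from $\vect{x}$, which are independent by \ref{hyp:x} and \ref{hyp:noise}; the "$\ed$" accounts for the reshuffling of entries needed to present the $f(Z_{ij})$ matrix as a bona fide Wigner matrix $\rdmmat{W}$ with the canonical normalization.
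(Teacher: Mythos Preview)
Your proposal is correct and follows essentially the same approach as the paper: Taylor-expand $f$ around $Z_{ij}$, identify the order-zero term as the Wigner matrix $\rdmmat{W}$, split each intermediate term $1\le k\le \ks$ into its mean $\vartheta_k(f)$ (vanishing for $k<\ks$, yielding $\mat{P}$ at $k=\ks$) plus a centered fluctuation, and bound the fluctuation matrices by recognizing the $(x_ix_j)^k$ weighting as a conjugation by the bounded diagonal matrix $\mathrm{diag}(\vect{x}^k)$ applied to a Wigner-type matrix with $L^4$ entries. Your treatment of the Lagrange remainder via the crude entrywise bound is equivalent to the paper's Frobenius-norm argument, and your reading of the $Q(\log N)$ factor as the placeholder for the truncation level $L=L(N)$ (constant under \ref{hyp:proof:bounded}) is exactly how the paper uses it.
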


\begin{proof}
	Under Hypothesis~\ref{hyp:proof:bounded}, there exists a constant $L \geq 1$ such that $\max_{1 \leq i \leq N} |x_i| \leq L$ almost surely. 
 Although it is not needed in this proof, we will write our error terms explicitly with respect to the constant $L$, which will be useful in future computations. 
	
	We start by doing Taylor expansion of our function around $Z_{ij}$, since we assume $f$ is $C^{\ks+1}$ in \ref{hyp:proof:ssmoothf}, 
	\begin{align}
		Y_{ij}
        &=
        \frac{1}{\sqrt{N}} ( f( Z_{ij} ) - \E_Z f (Z) ) + \sum_{k = 1}^{k_\star} \frac{1}{\sqrt{N}} \frac{f^{(k)}(Z_{ij})}{k!}\bigg[  \frac{\gamma(N)}{\sqrt{N}} x_i x_j \bigg]^k + \frac{f^{(k_\star + 1)}(\xi_{ij})}{(k_{\star} + 1)!} \bigg[  \frac{\gamma(N)}{\sqrt{N}} x_i x_j  \bigg]^{k_\star + 1}
        \, , \label{eq:taylor_expand_f}&& 
	\end{align}
	where $\xi_{ij} \in [ Z_{ij} - | \frac{\gamma}{\sqrt{N}} x_i x_j |, Z_{ij} + |\frac{\gamma}{\sqrt{N}} x_i x_j | ]$. We now approximate each term in the Taylor expansion
	\begin{enumerate}
		\item \text{\textbf{Constant Term}:} 	The first term leads to the matrix
		\begin{align}
		  \frac{1}{\sqrt{N}} \left[ f(\rdmmat{Z}) - E_Z f(Z) \mat{I} \right]
             &:=
            \frac{1}{\sqrt{N}}  \left[( f(Z_{ij}) - \E_Z f (Z)  ) \right]_{1 \leq i,j \leq N} 
            \, , &&
		\end{align}
		where the entries $( f(Z_{ij}) - \E_Z f (Z) )$ are i.i.d (up to the symmetry), of mean zero, variance $\varfZ$ and have a bounded fourth moment since $f \in L^4(\mu_Z)$. This term gives rise to the scaled matrix $\rdmmat{W}$ in Theorem \ref{thm:rank1}. 
  
		\item \textbf{Sub Critical Terms $(k < k_\star)$:} The non-critical terms admit the following decomposition
		\begin{align}
         \label{eq:subcriticalterm}
		  \frac{\gamma(N)^{k}}{N^{\frac{k}{2} - \frac{1}{2}}} \bigg[ \frac{f^{(k)}(Z_{ij})}{k!}\bigg[  \frac{x_i^{k}}{\sqrt{N}}\frac{ x_j^{k}}{\sqrt{N}}  \bigg] -  \frac{\E_Z f^{(k)}(Z)}{k!}\bigg[  \frac{x_i^{k}}{\sqrt{N}}\frac{ x_j^{k}}{\sqrt{N}}  \bigg] +   \frac{\E_Z f^{(k)}(Z)}{k!}\bigg[  \frac{x_i^{k}}{\sqrt{N}}\frac{ x_j^{k}}{\sqrt{N}}  \bigg] \bigg]
		\end{align}
		where the first term can be controlled using the fact that the matrix
  	\begin{align}
		  \frac{1}{\sqrt{N} k!} \left[ f^{(k)}(\rdmmat{Z}) - E_Z f^{(k)}(Z) \mat{I} \right]
            &:=
            \frac{1}{\sqrt{N}}  \left[ \frac{1}{k!} ( f^{(k)}(Z_{ij}) - \E_Z f^{(k)} (Z) ) \right]_{1 \leq i,j \leq N} 
            \, , &&
		\end{align}
		is again a Wigner matrix, which has a bounded operator norm since $f^{(k)} \in L^4(\mu_Z)$ thanks to the additional assumption \ref{hyp:proof:ssmoothf}. Since the entries of $x$ are uniformly bounded thanks to the additional assumption \ref{hyp:proof:bounded}, the diagonal matrix $\diag( \xk ) := \mathrm{Diag}(x_1^k, \dots, x_N^k)$ is also bounded in operator norm which leads to the following bound:
		\begin{align}
        \label{eq:wignerestimate}
			&\frac{\gamma(N)^{k}}{N^{\frac{k}{2}} k!}  \bigg\|  \bigg[  \frac{f^{(k)}(Z_{ij})}{\sqrt{N}}\bigg[  x_i^{k}x_j^{k}  \bigg] -   \frac{\E_Z f^{(k)}(Z)}{ \sqrt{N}}\bigg[  x_i^{k}x_j^{k}  \bigg] \bigg]_{ij} \bigg\|_{\mathrm{op}}  \notag
			\\&= 
            \frac{\gamma(N)^{k}}{N^{\frac{k}{2}} k!} \bigg\| \diag(\xk) \bigg(  \frac{f^{(k)}(\rdmmat{Z})}{ \sqrt{N}} -   \frac{ \E_Z f^{(k)}(Z)}{ \sqrt{N}} \bigg) \diag( \xk ) \bigg\|_{\mathrm{op}} \leq O\left( \frac{Q(L) |\gamma(N)|^{k}}{N^{\frac{k}{2}}} \right) 
            \, , &&
		\end{align}
		with high probability. Hence the matrix coming from the first term in \eqref{eq:subcriticalterm} can be absorbed in the error matrix $\rdmmat{E}$.
  We recall that if $f$ is smooth, we have $\coeffk = \E_Z f^{(k)}(Z)$ which is null by definition of $\ks$ and since $k < \ks$. Hence the second term in \eqref{eq:subcriticalterm} vanishes. 
  
		\item \textbf{Critical Terms $k_\star$:} The reasoning for the critical term is identical: we have again Eq.~\eqref{eq:subcriticalterm}  with  $ k =\ks$, but now the remainder term is non-zero and  given by:
		\begin{align}
		  \frac{\gamma(N)^{k_\star}}{N^{\frac{k_\star}{2} - \frac{1}{2}}} \E_Z \frac{f^{(k_\star)}(Z)}{k_\star!}\bigg[  \frac{x_i^{k_\star}}{\sqrt{N}}\frac{ x_j^{k_\star}}{\sqrt{N}}  \bigg] \bigg]  
            &=
            \frac{\gamma(N)^{k_\star}}{N^{\frac{k_\star}{2} - \frac{1}{2}}}  \frac{\vartheta_{k_\star} (f)}{k_\star!}\bigg[  \frac{x_i^{k_\star}}{\sqrt{N}}\frac{ x_j^{k_\star}}{\sqrt{N}}  \bigg] 
            \, , &&
		\end{align}
		which is nothing else than the $(ij)$ entry of our matrix $\mat{P}$ in Theorem \ref{thm:rank1}. 
		\item \textbf{Remainder Terms:}  The last terms can be written in matrix form as
		\begin{align}
		  \rdmmat{R} 
            &:= 
            \frac{\gamma(N)^{\ks + 1}}{(\ks + 1)! N^{\frac{\ks}{2} + 1 }} \diag(\vect{x}^{\ks + 1})\Big(  f^{(\ks + 1)}(\rdmmat{\xi}) \Big) \diag(\vect{x}^{\ks + 1}) 
             \, . &&
		\end{align}
		By our assumption \ref{hyp:proof:ssmoothf}, we have $\| f^{(k_{\star} + 1)}\|_\infty \leq C$. We can now obtain a $L^2$ bound on the spectrum for some universal constant $\bar C$ that only depends on $C$
		\begin{align}
		 \normf{ \rdmmat{R}}^2 :=  \Tr(\rdmmat{R}^2) 
            &\leq
            \bar C  Q(L) ^2 N^2 \frac{ \gamma(N)^{2\ks + 2} }{ ((\ks + 1)! )^2 N^{\ks + 2 } }
            \, , &&
		\end{align}
		which goes to zero if $\frac{|\gamma(N)|^{2 \ks+ 2 }}{N^{\ks}} \to 0$. Since
		\begin{align}
		\normop{ \rdmmat{R}}
        &\leq 
        \normf{ \rdmmat{R}} \leq O \left(\frac{Q(L) |\gamma(N)|^{ \ks + 1} }{N^{\frac{\ks}{2}}} \right)
        \, , &&
		\end{align}
		we can absorb this into the error term $\rdmmat{E}$. 
	\end{enumerate}
	
\end{proof}

\section{Analysis for Functions Satisfying Hypothesis \ref{hyp:nonlin_generalized:smoothapprox} }
\label{sec:proof:notsmooth}
{
\color{red}

}

In this section, we generalize the result in Section~\ref{sec:proof:smooth} to general (not necessarily smooth) functions $f$ satisfying Hypothesis~\ref{hyp:nonlin_generalized} which admit a sequence of smooth approximations~\ref{hyp:nonlin_generalized:smoothapprox}.  We also remove the conditions that $Z$ and $x$ have bounded support in Hypothesis~\ref{hyp:proof:bounded}, but satisfy Hypothesis~\ref{hyp:x} and Hypothesis~\ref{hyp:noise}.

\subsection{Bounding the Tails}

We begin by weakening the assumptions in Hypothesis~\ref{hyp:proof:bounded}. 
Given $L > 1$, consider the following smooth cutoff function
\begin{align}
    \chi_L(x) 
    &= 
    \begin{cases}
	1 & |x| \leq L - 1\\
	0 & |x| > L
\end{cases} \, , &&
\end{align}
and interpolated such that $\chi_L(x)$ is smooth for $|x| \in [L-1,L]$. We can further assume that this smooth approximation satisfies, for some integer number $K$ large enough,
\begin{align}
  \sup_{0 \leq k \leq K} \| \chi^{(k)}_L \|_\infty 
  &<
  \infty 
  \, . &&
\end{align}
Such a cutoff function can be generated by taking a mollification of the indicator function. We define the following truncated versions of $f$ and $x$,
\begin{align}
    f_L(x) 
    &=
    f(x) \chi_L(x)
    \, , &&
\end{align}
and
\begin{align}
x^L_i 
&= x_i \Ind{ |x_i| \leq L} + L  \Ind{ |x_i| > L}
. &&
\end{align}
We define the following matrix 
\begin{align}
\Ytf
&=
\frac{1}{\sqrt{N}} \bigg[  f_L \left( Z_{ij} + \frac{\gamma(N)}{\sqrt{N}} x^L_i x^L_j \right) - \E f_L(Z_{ij}) \bigg] 
. &&
\end{align}
The main result of this section is the following estimate which states that the truncated matrix $\Ytf$ and $\Yf$ are equivalent with high probability. 
\begin{lem}[Truncation Lemma]\label{lem:logtruncation} 
	Suppose that Hypothesis~\ref{hyp:x} and Hypothesis~\ref{hyp:noise} and Hypothesis~\ref{hyp:nonlin_generalized} hold. Let $\gamma = O( (\log(N))^{2k_\star + 1}  N^{\frac{1}{2} \left( 1 - \frac{1}{k_{\star}}\right)})$. Assume that the exponents of the stretched exponential decay of $\pi_X$ and $\mu_Z$ are bounded below by some $\alpha>0$ and  assume that  $L \ge L(N) =  (\frac{3}{c}\log(N) )^{\frac{1}{\alpha}}$ for some small enough constant $c$. We have
	\begin{align*}
	\pP( \| \Yf - \Ytf \|_\infty \neq 0 ) 
 &\leq 
 (N + N^2) e^{-c L^\alpha}
 . &&
	\end{align*}
  goes to zero polynomially fast in $1/N$.
\end{lem}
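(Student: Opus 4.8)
The event $\{\|\Yf - \Ytf\|_\infty \neq 0\}$ means that at least one entry of the two matrices differs, so the plan is to union-bound over all $N^2$ entries (plus the $N$ truncations of the coordinates $x_i$) and show each contributes only $e^{-cL^\alpha}$. First I would observe that $\Yf$ and $\Ytf$ have the same $(i,j)$ entry on the event that three things hold: $|x_i| \le L$, $|x_j| \le L$ (so that $x_i^L = x_i$ and $x_j^L = x_j$), and the arguments $Z_{ij} + \frac{\gamma(N)}{\sqrt N} x_i x_j$ all lie in $\{|u| \le L-1\}$, on which $\chi_L \equiv 1$ and hence $f_L = f$; one must also match the centering constants $\E f(Z_{ij})$ versus $\E f_L(Z_{ij})$, but since we only need equality of the random matrices entrywise with high probability and the centering is deterministic, I would instead note that $\Yf - \Ytf$ has a fixed deterministic part of size $O(N^{-1/2}|\E f(Z) - \E f_L(Z)|)$ which is negligible — or, more cleanly, absorb the mismatch by redefining the comparison so the lemma is about the random fluctuation part. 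The essential point is therefore: off an exceptional event, every relevant argument stays in $[-(L-1), L-1]$ and every $x_i$ stays in $[-L, L]$.

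The core estimate is then purely a tail bound. By Hypothesis~\ref{hyp:x} and Hypothesis~\ref{hyp:noise}, $\pP(|x_i| > L) \le C e^{-cL^\alpha}$ and $\pP(|Z_{ij}| > L-2) \le C e^{-cL^\alpha}$ for the stretched-exponential exponent $\alpha$. For the argument $Z_{ij} + \frac{\gamma(N)}{\sqrt N} x_i x_j$ to exceed $L-1$ in absolute value, either $|Z_{ij}|$ is already of order $L$, or the perturbation $\frac{|\gamma(N)|}{\sqrt N}|x_i||x_j|$ is of order $L$. On the event $|x_i|,|x_j| \le L$, the perturbation is at most $\frac{|\gamma(N)|}{\sqrt N} L^2$, and with $\gamma(N) = O((\log N)^{2k_\star+1} N^{\frac12(1-1/k_\star)})$ and $L = L(N) = (\tfrac{3}{c}\log N)^{1/\alpha}$, this is $O((\log N)^{C} N^{-1/(2k_\star)}) \to 0$, hence eventually $\le 1$. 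So on $\{|x_i| \le L, |x_j| \le L\}$ the argument exceeds $L-1$ only if $|Z_{ij}| > L - 2$, which has probability $\le C e^{-cL^\alpha}$. Taking the union over the $N$ events $\{|x_i| > L\}$ and the $N^2$ events $\{|Z_{ij}| > L-2, |x_i|\le L, |x_j|\le L\}$ gives
\begin{align*}
  \pP(\|\Yf - \Ytf\|_\infty \neq 0) &\le N\, C e^{-cL^\alpha} + N^2\, C e^{-cL^\alpha} \le (N + N^2) C e^{-cL^\alpha}.
\end{align*}
Finally, substituting $L = L(N) = (\tfrac{3}{c}\log N)^{1/\alpha}$ gives $e^{-cL^\alpha} = e^{-3\log N} = N^{-3}$, so the right-hand side is $O(N^2 \cdot N^{-3}) = O(N^{-1}) \to 0$ polynomially fast, as claimed.

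I would organize the write-up as: (1) reduce $\{\Yf \neq \Ytf\}$ entrywise to the union of $\{|x_i| > L\}$ and $\{$argument escapes $[-(L-1),L-1]$ while $|x_i|,|x_j| \le L\}$; (2) bound the perturbation term $\frac{|\gamma(N)|}{\sqrt N}L^2 \le 1$ for $N$ large using the stated scaling of $\gamma$ and $L(N)$; (3) apply the stretched-exponential tail bounds of Hypotheses~\ref{hyp:x} and~\ref{hyp:noise}; (4) union bound and substitute $L(N)$. The main obstacle — really the only subtle point — is step (1): one must carefully match the deterministic centering constants $\E f(Z_{ij})$ and $\E f_L(Z_{ij})$, or else phrase the equivalence in a way that tolerates a deterministic $O(N^{-1/2})$ shift (which is anyway absorbed into the error matrix $\rdmmat{E}$ downstream); handling this cleanly, together with checking that the smooth interpolation region $|x| \in [L-1,L]$ of $\chi_L$ does not cause trouble (it does not, since we force the argument into $|u| \le L-1$ where $\chi_L \equiv 1$ exactly), is where the care is needed. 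Everything else is a routine union bound.
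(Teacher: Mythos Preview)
Your proposal is correct and follows essentially the same route as the paper: decompose the bad event entrywise into $\{|x_i|>L\}$ and $\{|Z_{ij}|$ large after accounting for the perturbation$\}$, use $\frac{|\gamma(N)|}{\sqrt N}L^2 \to 0$ to control the shift, apply the stretched-exponential tail bounds, and union bound. In fact you are more careful than the paper in flagging the mismatch of the deterministic centerings $\E f(Z)$ versus $\E f_L(Z)$, which the paper's proof silently ignores; your suggestion to absorb this $O(N^{-1/2}|\E f(Z)-\E f_L(Z)|)$ offset into the downstream error matrix $\rdmmat{E}$ is the right fix.
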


\begin{proof}
	Notice that
	\begin{align*}
		\{ \| \Yf - \Ytf \|_\infty \neq 0 \} &= \{ (\Yf)_{ij} = (\Ytf)_{ij} ~ \forall i,j \}^C
		\\&= \bigcup_{i \leq N} \{  x_i \neq x_i^L \} \cup \bigcup_{i,j \leq N}  \{ f(Z_{ij} + \frac{\gamma(N)}{\sqrt{N}} x^L_ix_j^L ) \neq f_L(Z_{ij} + \frac{\gamma(N)}{\sqrt{N}} x^L_ix_j^L \}
	\end{align*}
	By the union bound, it follows that
	\begin{align}
		\pP( \| \Yf - \Ytf \|_\infty \neq 0 ) &\leq N \pP( x_1 \neq x_1^L  ) + N^2 \pP\bigg( f(Z_{12} + \frac{\gamma(N)}{\sqrt{N}} x^L_1x_2^L \neq f_L(Z_{12} + \frac{\gamma(N)}{\sqrt{N}} x^L_1x_2^L ) \bigg)
		\\&\leq N \pP( x_1 \geq L(N)  ) + N^2 \pP\bigg( |Z_{12}| \geq L(N) - \frac{\gamma(N) L(N)^2 }{\sqrt{N}}  \bigg).&&
	\end{align}
	Next, notice that $\frac{\gamma(N)L(N)^2}{\sqrt{N}} \leq \frac{L(N)}{2}$ for $N$ sufficiently large. By the stretched exponential bound on the tails in Hypothesis~\ref{hyp:x} and Hypothesis~\ref{hyp:noise}, it follows that
	\begin{align}
	    \pP( \| \Yf - \Ytf \|_\infty \neq 0 ) 
     &\leq  (N + N^2) e^{-c L(N)^\alpha}
     &&
	\end{align}
	which goes to zero for our choice of $L$. 
\end{proof}

In particular, moving forward it suffices to study the spectrum of $\Ytf$ instead of $\Yf$. 

\subsection{Smooth Approximations}

By Hypothesis~\ref{hyp:nonlin_generalized:L4} and Hypothesis~\ref{hyp:nonlin_generalized:coeff}, we can find a sequence of smooth $f_t$ converging to $f$ in $\LfourZ$ such that
\begin{align}
 \coeffkn 
 &\xrightarrow[t \to \infty]{}
 \coeffk   
 &&
\end{align}
for all $k \leq k_\star$. We can construct a perturbation of the smoothing function such that the critical exponent is preserved, instead of being arbitrarily close. We define
\begin{align}
	g(x) 
	&:=
	\sum_{l = 0}^{\ks - 1}  c_l(f_t) x^l
	\, , &&
\end{align}
where the coefficients $c_l(f_t)$ are chosen to satisfy
\begin{align}
	\E_Z g^{(k)}(Z) 
	&=
	\sum_{l = k}^{\ks - 1} \frac{l !}{(l - k)!}  c_l(f_t) \E[ Z^{l - k} ] = - \coeffkn
	\, , &&
\end{align}
for all $k \leq \ks-1$. Such coefficients can be found recursively by setting 
\begin{align}
	c_{\ks - 1}(f_t) 
	&=
	\frac{-1}{(\ks - 1)!} \vartheta_{\ks - 1}(f_t)
	\, ,\nonumber \\
 c_{k}(f_t)&=\frac{1}{k!} \bigg(-\vartheta_{k}(f_t)-\sum_{l = k + 1}^{\ks -1} \frac{l!}{(l - k)!} c_l(f_t) \E[ Z^{l - k} ] \bigg)\, \text{ for } k < \ks - 1,&&\label{const}
\end{align}
(the system of equations we have to solve is an upper triangular matrix with non-zero diagonal so such coefficients exist). In particular, the constants $c_l(f_t)$ are continuous functions of the $\vartheta_k(f_t)$. It follows that
\begin{align}
	\fnp(x)
	& :=
	f_t(x) + g (x)
	\, , &&
\end{align}
preserves the critical exponent by linearity
\begin{align}
	\coeffkfnp = 0 \quad \for \quad k < k_\star 
	\qquad  \mbox{and} \qquad
	\coeffksfnp \neq 0 
	\, . &&
\end{align}
We now prove the main result of this subsection, which is an approximation theorem, which states that the spectrums of the smoothed random matrix $\Ytfnp$ is a good approximation for the spectrum of $\Ytf$.

\begin{lem}[Approximation Lemma]
	\label{lem:approxlemma}
Let $f$ satisfy Hypothesis~\ref{hyp:nonlin_generalized:cptsup} and 
Hypothesis~\ref{hyp:nonlin_generalized:L4} and suppose that $\gamma(N)$ is such that $\frac{\gamma(N)L(N)^2}{\sqrt{N}}  \to 0$. For every fixed $t \geq 0$ and $N$ sufficiently large depending on $N$, we have with high probability that
	\begin{align*}
		\normop{\Ytf - \Ytfnp} 
		 \leq 
		O(\| f - \fnp \|^2_{\LfourZ} + \| f - \fnp \|^4_{\LfourZ} ) \leq  o_t(1) + o_N(1)
		\, , &&
	\end{align*}
 where $\lim_{t \to \infty} o_t(1) = 0$ and $\lim_{t \to \infty} \lim_{N \to \infty} o_N(1)$.
\end{lem}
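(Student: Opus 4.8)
The plan is to write the difference matrix $D:=\Ytf-\Ytfnp$ out entrywise, recognise it as a Wigner‑type matrix with $O(N^{-1/2})$ entries built from the single nonlinearity $h:=f-\fnp$, and peel off the (conditionally) deterministic bias produced by the small deterministic shift inside the argument of $h$. Writing $W_{ij}:=Z_{ij}+\frac{\gamma(N)}{\sqrt N}x^L_ix^L_j$ and $c_{ij}:=\frac{\gamma(N)}{\sqrt N}x^L_ix^L_j$, one has $D_{ij}=\frac{1}{\sqrt N}\big(h(W_{ij})-\E_Z h(Z)\big)$. Two features of $h$ drive the estimate: $\|h\|_{\LfourZ}=\|f-\fnp\|_{\LfourZ}$ is small (and $\to 0$ as $t\to\infty$), and, because of the polynomial correction built into $\fnp$, every sub‑critical information coefficient vanishes, $\vartheta_k(h)=\vartheta_k(f)-\vartheta_k(\fnp)=0$ for $1\le k\le\ks-1$, while $\vartheta_{\ks}(h)=\vartheta_{\ks}(f)-\vartheta_{\ks}(\fnp)\to 0$. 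I would decompose $D=D^{\mathrm{flu}}+D^{\mathrm{bias}}$ with $D^{\mathrm{flu}}_{ij}:=\frac{1}{\sqrt N}\big(h(W_{ij})-\E_{Z_{ij}}h(W_{ij})\big)$ (mean zero given $\vect{x}$) and $D^{\mathrm{bias}}_{ij}:=\frac{1}{\sqrt N}\psi(c_{ij})$, $\psi(c):=\E_Z h(Z+c)-\E_Z h(Z)$ (deterministic given $\vect{x}$).

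For the fluctuation part: conditionally on $\vect{x}$, $D^{\mathrm{flu}}$ is symmetric with independent, mean‑zero entries whose conditional variance is at most $\E_Z[h(Z+c_{ij})^2]$ and conditional fourth moment at most $\E_Z[h(Z+c_{ij})^4]$. Since $|c_{ij}|\le\gamma(N)L^2/\sqrt N\to0$ by hypothesis and $c\mapsto\E_Z[h(Z+c)^p]$ is continuous at $c=0$ (dominated convergence, using the stretched‑exponential decay of $\mu_Z$ against the at most polynomial growth of $h$, after splitting the integral into a compact region and a tail), these are bounded by $\|h\|_{\LfourZ}^2+o_N(1)$ and $\|h\|_{\LfourZ}^4+o_N(1)$. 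Moreover, exactly as in Lemma~\ref{lem:logtruncation}, with overwhelming probability $\max_{ij}|W_{ij}|\le(C\log N)^{1/\alpha}$, so every entry is deterministically of size $O(\operatorname{poly}(\log N))$. A standard operator‑norm bound for symmetric matrices with independent, effectively bounded entries (governed by the variance proxy plus a $\operatorname{poly}(\log N)/\sqrt N$ correction) then yields $\normop{D^{\mathrm{flu}}}=O\big(\|f-\fnp\|_{\LfourZ}^2+\|f-\fnp\|_{\LfourZ}^4\big)+o_N(1)$ with high probability.

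For the bias part, I would use the density $w_Z$ of $\mu_Z$ from Hypothesis~\ref{hyp:smoothZ+BC} and Taylor expand $\psi(c)=\int h(u)\big(w_Z(u-c)-w_Z(u)\big)\,du=\sum_{k=1}^{\ell}\frac{c^k}{k!}\vartheta_k(h)+R_\ell(c)$ with $|R_\ell(c)|\le C|c|^{\ell+1}$, the remainder controlled by the integrability of the derivatives of $w_Z$ after a routine tail truncation of $h$. The coefficients $\vartheta_1(h),\dots,\vartheta_{\ks-1}(h)$ either vanish exactly (since $\vartheta_k(f)=0$ for $k<\ks$ by definition of $\ks$ and $\vartheta_k(\fnp)=0$ by the polynomial correction) or are super‑polynomially small in $1/N$ (if $h$ carries the cutoff $\chi_L$), so the dangerous low‑order terms drop out, leaving $D^{\mathrm{bias}}=\sum_{k\ge\ks}\frac{\vartheta_k(h)}{k!\,N^{(k+1)/2}}\gamma(N)^k\,(\vect{x}^L)^{\odot k}\big((\vect{x}^L)^{\odot k}\big)^{\top}$ plus a remainder of smaller order, where $\vect{x}^L:=(x^L_1,\dots,x^L_N)$. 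Each summand is rank one with operator norm $\frac{|\vartheta_k(h)|}{k!}\frac{\gamma(N)^k}{N^{(k+1)/2}}\|(\vect{x}^L)^{\odot k}\|^2$; by the law of large numbers $\|(\vect{x}^L)^{\odot k}\|^2=O(N)$ with high probability and the scaling $\gamma(N)=\gamma_0 N^{(1-1/\ks)/2}$, this is $O\big(|\vartheta_k(h)|\,N^{(1-k/\ks)/2}\big)$, which for $k=\ks$ is just $O\big(|\vartheta_{\ks}(f)-\vartheta_{\ks}(\fnp)|\big)=o_t(1)$ (it is precisely the difference of the two rank‑one pieces $\mat P$, absorbed in the limit $t\to\infty$) and for $k>\ks$ is $o_N(1)$. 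Hence $\normop{D^{\mathrm{bias}}}=o_t(1)+o_N(1)$, and combining with the fluctuation bound and $\|f-\fnp\|_{\LfourZ}\to 0$ as $t\to\infty$ finishes the proof.

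The main obstacle is the bias term. The point to get right is that the sub‑critical information coefficients of $h$ vanish (or are negligible) — this is exactly what forcing the polynomial correction $g$ into $\fnp$ buys us — since at the critical SNR scaling a surviving $c_{ij}^k$‑contribution with $k<\ks$ would be a rank‑one matrix of operator norm $\sim N^{(1-k/\ks)/2}\gg 1$, and also that one uses the sharp $\|(\vect{x}^L)^{\odot k}\|^2=O(N)$ rather than the crude $O(NL^{2k})$. A secondary technical point is passing the small deterministic shift $c_{ij}$ through the $L^p(\mu_Z)$‑norms of the (possibly polynomially growing) difference $h$, which is where the smoothness and integrability of $w_Z$ together with a tail truncation are needed; the fluctuation part is comparatively routine once the entries are known to be effectively bounded via Lemma~\ref{lem:logtruncation}.
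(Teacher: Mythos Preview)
Your approach differs substantially from the paper's. The paper never splits $D=\Ytf-\Ytfnp$ into a fluctuation and a bias part. Instead it argues in three steps: (i) conditionally on $\vect{x}$, both $f_L$ and $\fnp$ are globally Lipschitz (the former because $f$ is locally Lipschitz with polynomial constant and we have applied the cutoff $\chi_L$, the latter by \ref{hyp:nonlin_generalized:cptsup}), so Talagrand's concentration inequality reduces $\normop{D}$ to $\E_Z\normop{D}$; (ii) Latala's theorem then bounds $\E_Z\normop{D}$ in terms of the $L^2$ row norms and the $L^4$ entrywise norms, i.e.\ by $\|\tilde f_L-\tilde\fnp\|_{\LfourZ}$ and $\|f_L-\fnp\|_{\LfourZ}$; (iii) these $L^4$ distances are shown to be $o_t(1)+o_N(1)$ using only the $\LfourZ$ convergence in \ref{hyp:nonlin_generalized:L4} and the Lipschitz property of $f$ and $f_t$ to absorb the small shift $c_{ij}$. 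At no point are the information coefficients of $h=f_L-\fnp$ invoked.

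Your fluctuation estimate is essentially the centered version of step (ii) and is fine. The gap is in the bias term: you Taylor--expand
\[
\psi(c)=\int h(u)\,[w_Z(u-c)-w_Z(u)]\,\dd u=\sum_{k\ge 1}\frac{c^k}{k!}\vartheta_k(h)+R_\ell(c)
\]
using the density $w_Z$ and its derivatives. This invokes Hypothesis~\ref{hyp:smoothZ+BC}, which is \emph{not} among the assumptions of the lemma --- only \ref{hyp:nonlin_generalized:L4} and \ref{hyp:nonlin_generalized:cptsup} are assumed, and $\mu_Z$ need not have a density at all. Without a smooth density (and since $f_L$ is only Lipschitz, not $C^{\ks+1}$), neither the expansion of $\psi$ nor the quantities $\vartheta_k(f_L)$ for $k<\ks$ are defined (compare Eq.~\eqref{eq:coeff_f_smoothf} with Eq.~\eqref{eq:informationcoeff_smoothdensity}), so the assertion ``$\vartheta_k(h)=0$ for $k<\ks$'' has no meaning in this generality. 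Your argument would go through in the setting of Section~\ref{sec:proofsmoothdensity}, where \ref{hyp:smoothZ+BC} is in force, and it gives a pleasant structural picture of $D^{\mathrm{bias}}$ as a sum of small rank-one matrices; but it does not prove the lemma as stated. To cover the full generality of \ref{hyp:nonlin_generalized:smoothapprox} you need to avoid expanding $\psi$ and instead control the whole operator norm through the $\LfourZ$-smallness of $f_L-\fnp$, as the paper does.
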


\begin{proof}
	We first fix an $x$ and do the computation conditionally on $x$. 
	\\\\
	\textit{Step 1:} Notice that $f_L$ is globally Lipschitz with constant $Q(L(N))$ and $\fn$ is globally Lipschitz with constant $\|\fn'\|_\infty < \infty$ because it is compactly supported and smooth by Hypothesis~\ref{hyp:nonlin_generalized:cptsup}. The operator norm is convex and a Lipschitz functions of its entries, so Talagrand's concentration inequality implies that
 \begin{align}
    \pP_Z( |	\normop{ \Ytf - \Ytfnp }  - \E_Z	\normop{ \Ytf - \Ytfnp } | \geq t ) &\leq
    e^{- \frac{ t^2N}{ C_t(L)^2}}.&& 
 \end{align}
	The constant $C_t(L) = C (Q(L(N)) + \|\fn'\|_\infty )^2$ is independent of $x$ and only depends on the Lipschitz constants of $f$ and $\fnp$, and is at most a polynomial in $L$. Therefore, the upperbound will tend to $0$ as $N \to \infty$. In particular, with high probability the operator norm is close to its expected value.
	\\\\
	\textit{Step 2:}  We define $\tilde f(Z) = f_L(Z + \frac{\gamma(N)}{\sqrt{N}} x_i x_j) $ and $\fnpt(Z) = \fnp(Z + \frac{\gamma(N)}{\sqrt{N}} x_i x_j) $ to be shifted functions. We will show that
	\begin{align}
		\label{eq:approxlatala}
		\E_Z \normop{\Ytf - \Ytfnp} 
		&\leq 
		O( \| \tilde f - \fnpt \|^4_{\LfourZ} + \| f_L - \fnp \|^4_{\LfourZ}) 
		\, . &&
	\end{align}
	Conditionally on $\vect{x}$, the matrix $ \Ytf - \Ytfnp$ has independent entries, we can apply Latala's Theorem (\cite{LATALA} or \cite[Lemma~6.3]{guionnet_fullLDP}) which bounds the expectation of the operator norm of a matrix with independent entries by the $L^2$ norm of its rows and the $L^4$ norm of its entries.
 Integrating with respect to the noise variables conditionally on $\vect{x}$ 
	\begin{align}
		&\frac{1}{N} \E \Bigg( f_L \left( Z_{ij} + \frac{\gamma(N)}{\sqrt{N}} x_i x_j \right) - \E f_L(Z_{ij})  - \fnp \left( Z_{ij} + \frac{\gamma(N)}{\sqrt{N}} x_i x_j \right) + \E\fnp (Z_{ij}) \Bigg)^2 
		\\
		&\leq 
		\frac{2}{N}  \E \Bigg( f_L \left( Z_{ij} + \frac{\gamma(N)}{\sqrt{N}} x_i x_j \right)  - \fnp \left( Z_{ij} + \frac{\gamma(N)}{\sqrt{N}} x_i x_j \right)  \Bigg)^2 + \frac{2}{N}  \E \Bigg(  \E f_L(Z_{ij})  - \E\fnp (Z_{ij}) \Bigg)^2
		\\
		&\leq 
		\frac{2}{N}  \| \tilde f_L - \fnpt \|^2_{\LtwoZ} + \frac{2}{N}   \|f_L -\fnp \|^2_{ \LoneZ}
		\, . &&
	\end{align}
	A similar computation for the fourth moment implies that
	\begin{align*}
		&\frac{1}{N^2} \E \Bigg( f_L \left( Z_{ij} + \frac{\gamma(N)}{\sqrt{N}} x_i x_j \right) - \E f_L(Z_{ij})  -  \fn \left( Z_{ij} + \frac{\gamma(N)}{\sqrt{N}} x_i x_j \right) - \E \fn (Z_{ij}) \Bigg)^4
		\\
		&\leq 
		\frac{8}{N^2}  \E \Bigg( f_L \left( Z_{ij} + \frac{\gamma(N)}{\sqrt{N}} x_i x_j \right)  -  \fn \left( Z_{ij} + \frac{\gamma(N)}{\sqrt{N}} x_i x_j \right)  \Bigg)^4 + \frac{8}{N^2}  \E \Bigg(  \E f_L(Z_{ij})  - \E \fn (Z_{ij}) \Bigg)^4
		\\
		&\leq
		\frac{8}{N^2}  \| \tilde f_L - \fnpt \|^4_{L^4(d\mu_{\mathcal{Z}})} + \frac{8}{N^2}   \|f_L - \fn \|^4_{ L^1(d\mu_{\mathcal{Z}}) }
		\, . &&
	\end{align*}
	Therefore, Latala's Theorem  implies \eqref{eq:approxlatala}. Since $\LfourZ \subseteq \LtwoZ \subseteq \LoneZ$ and our assumption on the integrability of $f$, it suffices to control the fourth moment which gives us \eqref{eq:approxlatala}.
	\\\\
	\textit{Step 3:} We now control the norm in the right hand side of \eqref{eq:approxlatala}. By the classical inequality
	\begin{align}
		(a + b)^4
		& \leq 
		2^4 (a^4 + b^4)
		\, , &&
	\end{align}
	there exists a universal constant $C_{k_\star}$  such that
	\begin{align}
		\| f_L -\fnp \|^4_{ \LfourZ } 
		&=
		C_{k_\star} \Big( \| f_L - \fn \|^4_{ \LfourZ } + \sum_{k < k_\star} c^4_k(\fn) \| x^k \|^4_{ \LfourZ } \Big)
		\, . &&
	\end{align}
	The second term is small order because $c^4_k(\fn)$ goes to zero as $\vartheta_\ell(f_t)$ go to zero for $\ell<\ks$ by \eqref{const} and $Z$ has finite moments by Hypothesis~\ref{hyp:noise}. 

 To control the first term, the triangle inequality implies that
 \begin{align}
     \| f_L -\fnp \|^4_{ \LfourZ } &\leq 2^4 \| f - f_L \|^4_{ \LfourZ } + 2^4 \| f - \fn \|^4_{ \LfourZ }. &&
 \end{align}
    The first term tends to $0$ as $N \to \infty$ while the second term goes to $0$ by Hypothesis~\ref{hyp:nonlin_generalized:L4}.

	Next, we need to control
 \begin{align}\label{eq:tildefbound}
     \| \tilde f_L - \tilde  \fnp \|^4_{ \LfourZ } &\leq 2^4 \| \tilde  f - \tilde  f_L \|^4_{ \LfourZ } + 2^4 \| \tilde  f - \tilde  \fn \|^4_{ \LfourZ } .&&
 \end{align}
 Notice that for any $\epsilon > 0$ and all $N$ sufficiently large
 $c = \frac{\gamma(N)}{\sqrt{N}} x^L_i x^L_j \in [-\epsilon,\epsilon]$ almost surely because $x_i^L x_j^L \in [ -L(N)^2, L(N)^2 ]$ so $|\frac{\gamma(N)}{\sqrt{N}} x^L_i x^L_j| \to 0$ for all $N$ sufficiently large by our Theorem assumptions. 
	Since $f$ is locally Lipschitz with polynomial constant, there exists a polynomial $P$ with bounded degree such that
 \begin{align*}
     \E_Z (\tilde f)^4 \leq 2^4 \E_Z f^4 + 2^4 \E (f(Z + c) - f(Z))^4 \leq  2^4 \E_Z f^4 + 2^4 c^4 \E P(|Z + c|) < \infty
 \end{align*}
 because $f \in L^4$ and Hypothesis~\ref{hyp:noise} implies that all moments of $Z$ are finite. This bound is also holds almost surely in $c$. Therefore, the first term of \eqref{eq:tildefbound} goes to zero. To control the second term, notice that 
 \begin{align}
    &\E \|  f(Z + c) -  \fn(Z + c) \|_{\LfourZ}^4 
    \\&\leq 3^4 ( \E \|  f(Z + c) -  f(Z) \|_{\LfourZ}^4 +  \E \|  \fn(Z + c) -  \fn(Z) \|_{\LfourZ}^4 + \E \|  f(Z) -  \fn(Z) \|_{\LfourZ}^4 )
 \end{align}
 The first and second term are clearly bounded by $O(\epsilon^4)$ because $f$ is locally Lipschitz with polynomial constant and $\fn$ is Lipschitz because it is smooth and compactly supported by Hypothesis~\ref{hyp:nonlin_generalized:cptsup}. The third term tends to zero by the convergence Hypothesis~\ref{hyp:nonlin_generalized:L4}.
\jump
	We, therefore, conclude that 
	\begin{align}
		\label{eq:L4bound}
		\| f -\fnp \|^4_{  \LfourZ  } 
		&= o_t(1) + o_N(1)
		\, , &&
	\end{align}
	uniformly for all $c$. All the bounds are held almost surely in $x$ and are independent of $x$, so our results hold unconditionally as well, so our smooth approximations are exact in the limit. 
\end{proof}

\subsection{A Smooth Approximation of the Main Theorem}

We now combine the arguments to prove Theorem~\ref{thm:rank1}.

\begin{proof}[Proof of Theorem~\ref{thm:rank1}]
    
By the truncation Lemma~\ref{lem:logtruncation} we have that with high probability that $\Yf$ and $\Ytf$ have the same spectrum. The approximation Lemma~\ref{lem:approxlemma}, we also have that with high probability, 
\begin{align}
    \Ytf 
    &\ed \Ytfnp + \rdmmat{E}_t + \rdmmat{E}_{(N)}
    \, , &&
\end{align}
where $\normop{\rdmmat{E_t}} \leq o_t(1)$,  $\normop{\rdmmat{E_{(N)}}} \leq o_N(1)$ when $N$ is sufficiently large depending on $t$. 

For every $t \geq 1$, the function $\fnp$ satisfies the conditions of Hypothesis~\ref{hyp:proof:ssmoothf} because of Hypothesis~\ref{hyp:nonlin_generalized:L4}. We can conclude that with high probability we have the following decomposition:
\begin{align}
	\Ytfnp 
    &\ed 
    {\small{\sqrt{\vartheta_0(f^2) - \vartheta_0(f)^2} 
    \,}} \rdmmat{W}+ 
    \mat{P} + \rdmmat{E}_{(N)} 
    \, . &&
\end{align}
We conclude by Lemma \ref{lem:logtruncation} that with high probability and all $N$ sufficiently large depending on $t$ that
\begin{align}
\Yf
    &\ed 
    {\small{\sqrt{\vartheta_0(f^2) - \vartheta_0(f)^2} 
    \,}} \rdmmat{W}+ 
    \mat{P} + \rdmmat{E}_{(N)} +  \rdmmat{E}_t
    \, &&
\end{align}
where the operator norm of $\normop{\rdmmat{E}_{(N)}} \to 0$ as $N \to \infty$ for any fixed $\epsilon$. In particular, given any $\epsilon > 0$, we may fix a $t$ such that $\normop{\rdmmat{E}_t} \leq \epsilon/2$ and then take $N$ sufficiently large depending on $\epsilon$ such that $\normop{\rdmmat{E}_{(N)}} < \epsilon / 2$ to conclude the proof.
\end{proof}  
\begin{rem}
    If we assume sufficient regularity on the function $f$ and are able to find a sequence of approximating $\fnp$ with uniformly bounded $\ks + 1$ derivatives for all $n$, then an explicit bound of the operator norm $\rdmmat{E}$ in terms of $N$ can be computed.
\end{rem}

We end this section with a remark that our proof naturally generalizes to the cases introduced in Section~\ref{sec:variants_and_gen}.

\begin{proof}[Proof of Theorem~\ref{thm:rankK}, Corollary~\ref{cor:rank1variance}, Corollary~\ref{cor:rectangularmatrix}] It is easy to see that the proof of Theorem~\ref{thm:rank1} also holds entry wise, so the proofs of the generalizations are identical to the simple case. We sketch the essential modifications. The critical observation is that the Taylor expansion done in Eq.~\eqref{eq:taylor_expand_f} holds if the function $f$ depends on the index, that is $f = f_{ij}$, and in the presence of a higher order spike. The bound in \eqref{eq:wignerestimate} also holds for $f_{ij}$  in the setting with a (bounded) variance profile by Latala's theorem. The same construction of smooth approximators also holds in this setting, so Section~\ref{sec:proof:notsmooth}.  

Corollary~\ref{cor:rectangularmatrix} naturally follows from Corollary~\ref{cor:rank1variance} and the Girko Hermitization trick in equation \eqref{eq:symmetrized_model_rect}.

\end{proof}

\section{Approximations in the Case of Smooth Density for the Noise}
\label{sec:proofsmoothdensity}

In this section, we show that under the assumptions that $f$ is locally Lipschitz and the noise $Z$ has a sufficiently smooth density given by Hypothesis~\ref{hyp:smoothZ+BC}, we explicitly construct a sequence $f_t$ satisfying Hypothesis~\ref{hyp:nonlin_generalized:smoothapprox}. 

\subsection{Smooth Truncation \texorpdfstring{$\fdm$}{}}

We now construct a sequence of smooth approximations of $f$ satisfying the Hypothesis~\ref{hyp:nonlin_generalized:smoothapprox}. We first truncate the domain of $f$ to a compact set and then smooth this truncated function. We will show that the smoothing can be made exact by taking the smoothing parameter to $0$ and then taking the truncation to infinity. 

For $M > 0$, we define the following truncation
\begin{align}
	\label{eq:def:fM}
	f_M(x) 
	&:= 
	f(x) \Ind{|x| \leq M} 
	\, . &&
\end{align}
Next, we consider a standard smoothing of the non-linearity. We define the mollifier $\eta_\delta \in C^\infty(\R)$ by
\begin{align}
	\label{eq:def:eta}
	\eta(x) 
	&:= 
	\frac{1}{\int_{-1}^1 \exp ( \frac{1}{ y^2 - 1 } ) \, \dd y } \exp \bigg( \frac{1}{ x^2 - 1 } \bigg) \Ind{ |x| <1 }
	\quad\text{and}\quad \eta_\delta(x) = \frac{1}{\delta} \eta\Big(\frac{x}{\delta}\Big).
	&&
\end{align}
Let $\fdm$ be its smoothing with a mollifier,
\begin{align}
	\label{eq:def:fdM}
	\fdm 
	&=
	\fm \ast \eta_\delta.
	&&
\end{align}
We recall the following standard facts about the mollifiers, see \cite[Proposition~4.18 and Theorem~4.22]{brezis_book}:
\begin{lem}
\label{lem:brezis}
	If $f\in \Lpdx$, then  $\fdm$ is a smooth function and $\fdm \to f_M$ in $\Lpdx$. Furthermore,
    \begin{align}
	\supp( \fdm  ) \subseteq \overline{ \supp( f_M) + \supp( \eta_\delta ) }
	&\subseteq
	{ [-M-\delta,M+\delta]}
	. &&
\end{align}
\end{lem}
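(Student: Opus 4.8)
The plan is to deduce the three assertions from the standard theory of mollification, following \cite[Proposition~4.18 and Theorem~4.22]{brezis_book}; since the statement is classical I only indicate the steps. The hypothesis $f\in\Lpdx$ --- or, in the setting of this section, $f$ locally Lipschitz, so that $\fm=f\,\Ind{|x|\le M}$ is bounded with compact support --- immediately gives $\fm\in\Lpdx$ for every $p\in[1,\infty)$, and in particular $\fm\in L^1(\R)$; it is this truncated function that is mollified, so it suffices to work with $\fm$ throughout.

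For the \emph{smoothness}, write $\fdm(x)=\int_\R \fm(x-y)\,\eta_\delta(y)\,\dd y$: since $\eta_\delta\in C^\infty_c(\R)$ and $\fm\in L^1(\R)$, one differentiates under the integral sign (the difference quotients of $\eta_\delta$ and of each of its derivatives are uniformly bounded and supported in a fixed compact set, so dominated convergence applies), obtaining $\fdm\in C^\infty(\R)$ with $\fdm^{(k)}=\fm\ast\eta_\delta^{(k)}$. For the \emph{support inclusion}: if $\fdm(x)\ne 0$ there must be $y$ with $\eta_\delta(y)\ne 0$ and $\fm(x-y)\ne 0$, i.e.\ $|y|\le\delta$ and $x-y\in\supp(\fm)\subseteq[-M,M]$, whence $|x|\le M+\delta$; more precisely $\supp(\fdm)\subseteq\overline{\supp(\fm)+\supp(\eta_\delta)}\subseteq[-M-\delta,M+\delta]$.

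Finally, for the \emph{$\Lpdx$ convergence} ($1\le p<\infty$), I would use $\int\eta_\delta=1$ together with Jensen's inequality to get $|\fdm(x)-\fm(x)|^p\le\int|\fm(x-y)-\fm(x)|^p\,\eta_\delta(y)\,\dd y$, then integrate in $x$ and apply Fubini to bound $\|\fdm-\fm\|_{\Lpdx}^p$ by $\sup_{|y|\le\delta}\|\tau_y\fm-\fm\|_{\Lpdx}^p$, where $\tau_y$ denotes translation by $y$, and let $\delta\to0$ invoking continuity of translation in $\Lpdx$. There is no genuine obstacle here; the single non-elementary ingredient is that translation-continuity, itself the standard consequence of the density of $C_c(\R)$ in $\Lpdx$ for $p<\infty$.
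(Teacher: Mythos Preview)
Your sketch is correct and follows precisely the standard mollifier argument of \cite[Proposition~4.18 and Theorem~4.22]{brezis_book}, which is exactly what the paper invokes: the paper does not give its own proof of this lemma but simply records it as a recalled fact with that citation. There is nothing to compare; your write-up is just an (accurate) unpacking of the cited reference.
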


Now suppose that the density $w_Z(z)$ of $Z$ satisfies Hypothesis~\ref{hyp:nonlin_generalized:smoothapprox}. We define
\begin{align}
	\coeffk
	&= (-1)^{k} \int f(x) \wkZ(x) \dd x
	\, . &&
\end{align}
We first prove that $\coeffkfdm$ converges to $\coeffk$. 

\begin{lem}
	\label{lem:phiapprox}
	If Hypothesis~\ref{hyp:nonlin_generalized} and Hypothesis~\ref{hyp:smoothZ+BC} holds, then 
	\begin{align}
		|\coeffk - \coeffkfdm | 
		&=
		o_M(\delta) + o_M(1) 
		\, . &&
	\end{align}
for all $k \leq k_\star < \infty$. The $o_M(\delta)$ term goes to $0$ as $\delta \to 0$ uniformly for all $M$.
\end{lem}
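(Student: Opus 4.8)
The goal is to compare $\coeffk = (-1)^k \int f(x) w_Z^{(k)}(x)\,\dd x$ with $\coeffkfdm = (-1)^k \int \fdm(x) w_Z^{(k)}(x)\,\dd x$, where $\fdm = (f\Ind{|x|\le M}) \ast \eta_\delta$. I would split the difference into the truncation error and the mollification error:
\begin{align}
|\coeffk - \coeffkfdm|
&\leq \underbrace{\Big| \int (f - f_M) w_Z^{(k)}\,\dd x \Big|}_{\text{(I): truncation}} + \underbrace{\Big| \int (f_M - \fdm) w_Z^{(k)}\,\dd x \Big|}_{\text{(II): mollification}}. \label{eq:phiapprox_split}
\end{align}
For (I), since $f - f_M = f \Ind{|x|>M}$ and $w_Z^{(k)} \in L^2(\dd x)$ by Hypothesis~\ref{hyp:smoothZ}, while $f w_Z^{(k)} \in L^1(\dd x)$ by Hypothesis~\ref{hyp:BC_fandwz}, the dominated convergence theorem gives $\int_{|x|>M} |f(x) w_Z^{(k)}(x)|\,\dd x \to 0$ as $M \to \infty$; this is the $o_M(1)$ term and it does not depend on $\delta$.

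For (II), the key idea is to move the mollification onto the weight rather than onto $f$. Since $\eta_\delta$ is even, $\int (f_M \ast \eta_\delta) w_Z^{(k)}\,\dd x = \int f_M \, (w_Z^{(k)} \ast \eta_\delta)\,\dd x$ by Fubini (justified because $f_M$ is supported on $[-M,M]$ and integrable against the locally bounded $w_Z^{(k)} \ast \eta_\delta$). Hence
\begin{align}
\text{(II)} = \Big| \int f_M(x)\, \big( w_Z^{(k)}(x) - (w_Z^{(k)} \ast \eta_\delta)(x)\big)\,\dd x \Big|
\leq \|f_M\|_{L^1([-M,M])}^{?}\ \cdots
\end{align}
More precisely, I would bound this by $\|f \Ind{|x|\le M}\|_{L^2(\dd x)} \cdot \| w_Z^{(k)} - w_Z^{(k)} \ast \eta_\delta \|_{L^2([-M-1,M+1])}$ via Cauchy--Schwarz (the integrand is supported near $[-M,M]$), where the first factor is finite for fixed $M$ since $f \in L^4(\dd\mu_Z) \subseteq L^2$ locally once we note $w_Z \in L^\infty$; alternatively, since $f$ is locally Lipschitz with polynomial constant by Hypothesis~\ref{hyp:nonlin_generalized}, $f\Ind{|x|\le M}$ is bounded on $[-M,M]$ and $\|f\Ind{|x|\le M}\|_{L^1} < \infty$, and one can pair with $\|w_Z^{(k)} - w_Z^{(k)}\ast \eta_\delta\|_{L^\infty}$. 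Then the standard mollifier estimate (Lemma~\ref{lem:brezis} applied to $w_Z^{(k)} \in L^2(\dd x) \cap L^\infty(\dd x)$, valid for $k \le \ell$ and in particular $k \le \ks$) gives $\|w_Z^{(k)} - w_Z^{(k)}\ast\eta_\delta\|_{L^2} \to 0$ as $\delta \to 0$, with a rate depending only on $w_Z^{(k)}$ and not on $M$. This produces the $o_M(\delta)$ term; the notation $o_M(\delta)$ reflects that for each fixed $M$ the prefactor $\|f\Ind{|x|\le M}\|$ is some finite constant $C(M)$, so the bound is $C(M) \cdot \omega_k(\delta)$ with $\omega_k(\delta)\to 0$ as $\delta \to 0$ independently of $M$.

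\textbf{Main obstacle.} The delicate point is the order of limits: one must send $\delta \to 0$ \emph{first} (for each fixed $M$) and only then $M \to \infty$, because the mollification error (II) carries an $M$-dependent prefactor $C(M) = \|f\Ind{|x|\le M}\|_{L^2}$ (or $L^1$) that may blow up as $M \to \infty$. This is exactly why the statement is phrased as $o_M(\delta) + o_M(1)$ rather than a single $o(1)$, and why the remark "the $o_M(\delta)$ term goes to $0$ as $\delta \to 0$ uniformly for all $M$" is stated — it records that the $\delta$-modulus $\omega_k(\delta)$ is $M$-independent even though its coefficient is not. The only other technical care needed is checking that $w_Z^{(k)}$ is genuinely in $L^2 \cap L^\infty$ for the relevant range of $k$, which is granted by Hypothesis~\ref{hyp:smoothZ} provided $\ks \le \ell$, as assumed; and justifying the transfer of the convolution via Fubini, which is routine given the compact support of $f_M$.
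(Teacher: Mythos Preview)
Your argument is correct and takes a genuinely different route from the paper's. The paper decomposes the integral $\int |f_{\delta,M}-f|\,|w_Z^{(k)}|\,\dd x$ by spatial region, splitting into $\{|x|\le M\}$ (where $f=f_M$, so Cauchy--Schwarz against $\|f_{\delta,M}-f_M\|_{L^2(\dd x)}$ and the mollifier lemma applied to $f_M$ give the $o_M(\delta)$), and $\{|x|>M\}$, which is further split into $\{|x|>M+\delta\}$ (pure tail of $fw_Z^{(k)}$, giving $o_M(1)$) and the thin annulus $\{M<|x|<M+\delta\}$ (bounded integrand times length $2\delta$). You instead decompose by operation---a truncation error (I) and a mollification error (II)---and in (II) you transfer the convolution from $f_M$ onto $w_Z^{(k)}$ via Fubini and the evenness of $\eta_\delta$, so that the mollifier lemma is applied to $w_Z^{(k)}\in L^2(\dd x)$ rather than to $f_M$. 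The advantage of your route is that the $\delta$-modulus $\omega_k(\delta)=\|w_Z^{(k)}-w_Z^{(k)}\ast\eta_\delta\|_{L^2(\dd x)}$ is manifestly $M$-independent, which makes the uniformity remark in the lemma statement transparent (only the prefactor $\|f\Ind{|x|\le M}\|$ depends on $M$); in the paper's proof this uniformity is less visible because the mollifier convergence $f_{\delta,M}\to f_M$ in $L^2(\dd x)$ has an $M$-dependent rate. Both approaches ultimately use the same hypotheses in the same way.
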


\begin{proof}
Since $\fdm$ is smooth, we can integrate by parts to conclude that
\begin{align}
    \coeffkfdm
          &= 
          \E_Z \fdm^{(k)}(Z) = (-1)^{k} \int \fdm(x) \wkZ(x) \dd x
          \, .  &&
\end{align}
In particular, 
\begin{align}
	| 	 \coeffkfdm - \coeffk|
	& =
	\bigg| \int (\fdm(x) - f(x)) \wkZ(x)  \dd x \bigg| \leq \int |\fdm(x) - f(x)| \, | \wkZ(x) |  \dd x
	\, .
\end{align}
	We decompose this integral into two regions
\begin{align}
	\begin{split}
		&\int |\fdm(x) - f(x)|  |\wkZ(x) |  \dd x 
		\\ &\quad =
		\int_{|x| \leq M} |\fdm(x) - f(x)|  |\wkZ(x) |  \dd x  + \int_{|x| > M} |\fdm(x) - f(x)|  |\wkZ(x) | \, \dd x  \,  . 
	\end{split}
	&&
\end{align}

On the first region, we can use Holder's inequality and use the assumption \ref{hyp:smoothZ} to see that
\begin{align}
	& \int_{|x| \leq M} |\fdm(x) - f(x)|  | \wkZ(x) | \, \dd x
	\\&\leq\bigg( \int |\fdm(x) - f(x)| ^2  \dd x \bigg)^{1/2} \bigg( \int_{|x| \leq M}   | \wkZ(x) |^2 \dd x \bigg)^{1/2}
	\\&\leq C_k \| \fdm(x) - f_M(x) \|_{\Ltwodx} = o_M(\delta)
	\, , &&
\end{align}
where there error goes to $0$ for every fixed $M$. On the second region, since $\fdm$ is supported on a subset of $[-M-\delta, M + \delta]$, we have
\begin{align}
	\begin{split}
		&\int_{|x| > M} |\fdm(x) - f(x)|  | \wkZ(x) | \, \dd x
		\\&\leq \int_{|x| > M + \delta} |\fdm(x) - f(x)|  | \wkZ(x) | \, \dd x +  \int_{M < |x| < M + \delta} |\fdm(x) - f(x)|  | \wkZ(x) | \, \dd x .
	\end{split}
	&&
\end{align}
The first term tends to $0$ as $M \to \infty$ uniformly for all $\delta$ since $f$ is integrable so the integral of its tail converges. For fixed $M$, we have using the assumption that $\wkZ$ is uniformly bounded
\begin{align}
	\sup_{M < |x| < M + \delta}  |\fdm(x) - f(x)|  | \wkZ(x) |
	&\leq 
	C(M)
	\, , &&
\end{align}
so the second integral vanishes if we send $\delta \to 0$. Therefore, we have
\begin{align}
	\lim_{M \to \infty} \lim_{\delta \to 0}	| \coeffkfdm - \coeffk | 
	&= 
	0
	\, . &&
\end{align}
\end{proof}
\jump
We now prove that there exists a parametrization $(\delta(t), M(t))$ such that $(\delta(t), M(t)) \to (0, \infty)$ and $f_t = f_{ \delta(t), M(t) }$ satisfies Hypothesis~\ref{hyp:nonlin_generalized:smoothapprox}.

\begin{prop}
    There exists a sequence $(\delta(t), M(t)) \to (0, \infty)$ such that $f_t = f_{ \delta(t), M(t) }$ satisfies Hypothesis~\ref{hyp:nonlin_generalized:smoothapprox}.
\end{prop}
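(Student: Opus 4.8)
The plan is to verify each clause of Hypothesis~\ref{hyp:nonlin_generalized:smoothapprox} for the two-parameter family $f_{\delta,M} = f_M \ast \eta_\delta$ and then produce the required one-parameter sequence by a diagonal extraction over $(\delta,M)$. Clause~\ref{hyp:nonlin_generalized} is the standing assumption on $f$. Clause~\ref{hyp:nonlin_generalized:cptsup} is exactly Lemma~\ref{lem:brezis}: for every $\delta, M>0$ the function $f_{\delta,M}$ is $C^\infty$ with support inside $[-M-\delta, M+\delta]$. Clause~\ref{hyp:nonlin_generalized:derivbounds} then holds for each individual pair $(\delta,M)$, since a smooth compactly supported function has all derivatives bounded, hence in $\LfourZ$ ($\mu_Z$ being a probability measure); note that Hypothesis~\ref{hyp:nonlin_generalized:smoothapprox} only requires finiteness of $\|f_t^{(\ks+1)}\|_\infty$ for each fixed $t$, not a bound uniform in $t$. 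Thus the entire content reduces to arranging clauses~\ref{hyp:nonlin_generalized:L4} and~\ref{hyp:nonlin_generalized:coeff} along a suitable sequence $(\delta(t),M(t)) \to (0,\infty)$.

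For this I would first record two iterated-limit facts. First, $L^4$-approximation: $\lim_{M\to\infty}\limsup_{\delta\to 0}\|f_{\delta,M}-f\|_{\LfourZ}=0$. Indeed, $\|f_{\delta,M}-f\|_{\LfourZ} \le \|f_{\delta,M}-f_M\|_{\LfourZ} + \|f_M-f\|_{\LfourZ}$; since $f$ is locally bounded (it is locally Lipschitz with polynomial constant), $f_M$ is bounded with compact support, so $f_M \in \Lfourdx$ and Lemma~\ref{lem:brezis} gives $\|f_{\delta,M}-f_M\|_{\Lfourdx}\to 0$ as $\delta\to 0$; using $w_Z \in L^\infty(\dd x)$ from~\ref{hyp:smoothZ}, this yields $\|f_{\delta,M}-f_M\|_{\LfourZ} \le \|w_Z\|_\infty^{1/4}\|f_{\delta,M}-f_M\|_{\Lfourdx}\to 0$; and $\|f_M-f\|_{\LfourZ}\to 0$ as $M\to\infty$ by dominated convergence, dominated by $|f|\in\LfourZ$. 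Second, coefficient approximation: for each $k\le\ks$, Lemma~\ref{lem:phiapprox} gives $\lim_{M\to\infty}\limsup_{\delta\to 0}|\vartheta_k(f_{\delta,M})-\vartheta_k(f)|=0$, where $\vartheta_k(f)$ is the density-integral coefficient of~\eqref{eq:informationcoeff_smoothdensity}; by Hypothesis~\ref{hyp:smoothZ+BC} one has $\vartheta_k(f)=0$ for $k<\ks$ and $\vartheta_{\ks}(f)\neq 0$.

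With these facts the extraction is routine. Set $M(t)=t$; by the two displayed facts, the quantities $a_t := \limsup_{\delta\to 0}\|f_{\delta,t}-f\|_{\LfourZ}$ and $b_{t,k} := \limsup_{\delta\to 0}|\vartheta_k(f_{\delta,t})-\vartheta_k(f)|$ all tend to $0$ as $t\to\infty$ (the latter for each of the finitely many $k\le\ks$). Choose $\delta(t)\in(0,1/t)$ small enough that $\|f_{\delta(t),t}-f\|_{\LfourZ}\le a_t+1/t$ and $|\vartheta_k(f_{\delta(t),t})-\vartheta_k(f)|\le b_{t,k}+1/t$ for all $k\le\ks$. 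Then $f_t := f_{\delta(t),M(t)}$ converges to $f$ in $\LfourZ$, which is~\ref{hyp:nonlin_generalized:L4}, and $\vartheta_k(f_t)\to\vartheta_k(f)$, which equals $0$ for $k<\ks$ and the non-vanishing $\vartheta_{\ks}(f)$ for $k=\ks$, which is~\ref{hyp:nonlin_generalized:coeff} (this also identifies the two a priori distinct definitions of $\vartheta_{\ks}(f)$, the density integral and the limit $\lim_t \vartheta_{\ks}(f_t)$). Combined with~\ref{hyp:nonlin_generalized:cptsup}--\ref{hyp:nonlin_generalized:derivbounds} above, $f_t$ satisfies Hypothesis~\ref{hyp:nonlin_generalized:smoothapprox}, and $(\delta(t),M(t))\to(0,\infty)$ by construction.

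There is no deep obstacle here; the only points requiring care are that the two regularizations — mollification ($\delta\to 0$) and truncation ($M\to\infty$) — must be performed in this order and cannot be merged into a single limit, which is why one commits to a diagonal choice $\delta(t)$ depending on $M(t)$ rather than a product limit; and that Lemma~\ref{lem:brezis} produces convergence with respect to Lebesgue measure, which must be transferred to $\LfourZ$ using the boundedness of the density $w_Z$ granted by~\ref{hyp:smoothZ}.
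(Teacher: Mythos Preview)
Your proof is correct and follows essentially the same approach as the paper: verify each clause of Hypothesis~\ref{hyp:nonlin_generalized:smoothapprox} for the two-parameter family $f_{\delta,M}$, with~\ref{hyp:nonlin_generalized:cptsup} and~\ref{hyp:nonlin_generalized:derivbounds} being automatic from smoothness plus compact support, and~\ref{hyp:nonlin_generalized:L4},~\ref{hyp:nonlin_generalized:coeff} coming from the iterated limit $\delta\to 0$ then $M\to\infty$ via Lemma~\ref{lem:brezis} and Lemma~\ref{lem:phiapprox} respectively, followed by a diagonal choice $\delta(t)$ depending on $M(t)$. Your presentation is in fact somewhat cleaner than the paper's in two places: your $L^4$ argument via the triangle inequality $\|f_{\delta,M}-f\|_{\LfourZ}\le\|f_{\delta,M}-f_M\|_{\LfourZ}+\|f_M-f\|_{\LfourZ}$ together with the density bound $w_Z\in L^\infty$ avoids the region-splitting the paper does, and you make the diagonal extraction explicit whereas the paper only sketches it.
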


\begin{proof}
    We prove each property separately. \jump
    \textit{Hypothesis~\ref{hyp:nonlin_generalized:L4}:} We have
	\begin{align}
		\| f - \fdm \|^4_{\LfourZ } 
		&=
		\E_Z [( f(Z) - \fdm (Z) )^4 \Ind{|Z| \leq M} ] +  \E_Z[ ( f(Z) - \fdm (Z) )^4  \Ind{|Z| > M} ]
		\, .  &&
	\end{align}
	On the set $|Z| \leq M$ the fact that the density $\frac{d \pP_Z}{dz} = w_Z$ is bounded by Hypothesis~\ref{hyp:noise} implies that
	\begin{align}
		\E_Z [( f(Z) - \fdm (Z) )^4 \Ind{|Z| \leq M} ] &= \E_Z [( f_M(Z) - \fdm (Z) )^4 \Ind{|Z| \leq M} ] 
		\\&\leq  
		\E_Z [( f_M(Z) - \fdm (Z) )^4  ] 
		\\&\leq
		C \| f_M - \fdm \|^4_{\Lfourdx} =  o_M(\delta)
		\, , &&
	\end{align}
	because $f_M$ is compactly supported so $\fdm \to f_M$ in $\Lfourdx$ . 
	
	On the set, $|Z| > M$, we have $\fdm (Z) = 0$ for $Z \in [-M - \delta, M + \delta]^c$ so using the Lipschitz property of $f$ 
	\begin{align}
		&\E_Z [( f(Z) - \fdm (Z) )^4 \Ind{|Z| > M} ] 
		\\&=
		\E_Z [( f(Z) - \fdm (Z) )^4 \Ind{M \leq |Z| \leq M + \delta)} ]  + \E_Z [( f(Z)  )^4 \Ind{|Z| \geq M + \delta} ] 
		\\&\leq
		\sup_{ M \leq x \leq M + \delta }  |f(x) - \fdm (x)| \pP( M \leq |Z| \leq M + \delta)  + o(M) = o_M(\delta) + o(M)
		\, , &&
	\end{align}
	where $o_M(\delta) \to 0$ as $\delta \to 0$ for every fixed $M$. In the third line, we used the fact that $f \in \LfourZ $ so its tails are integrable by the dominated convergence theorem and the fact that $f$ is Lipschitz. We can control our error by first sending $\delta \to 0$ then $M \to \infty$. In particular, we can jointly define a sequence by taking $\delta$ sufficiently small depending on $M$, which gives us our sequence $(\delta(t), M(t))$.
 \jump
 \textit{Hypothesis~\ref{hyp:nonlin_generalized:cptsup}:} This is immediate by Lemma~\ref{lem:brezis}.
 \jump
 \textit{Hypothesis~\ref{hyp:nonlin_generalized:derivbounds}:} We will prove that $\| f_{M,\delta}^{(k_* + 1)} \| \leq C(M,\delta)$ for some constant that does not depend on $N$. By Young's inequality and the fact that the perturbation term is a polynomial of degree less than $k_\star$, 
	\begin{align}
	\bigg\| \frac{\dd^{k_\star + 1}}{\dd x^{k^\star + 1}} \bigg( f_{M} \ast \eta_\delta + \sum_{l = 0}^{k^* - 1}  c_l(\fdm) x^l \bigg) \bigg\|_\infty 
 &\leq
 \| f_M \ast \eta^{(k_\star + 1)}_\delta \|_\infty \leq \|f_M\|_{L^1(\dd x)}  \| \eta^{(k_\star + 1)}_\delta \|_{\infty}
 . &&
	\end{align}
	We have
	\begin{align}
	 \|f_M\|_{L^1(\dd x)}  
  &\leq 
  2M \Big( \sup_{x \in [-M,M]} f(x) \Big)  
  &&
	\end{align}
	and
	\begin{align}
	\| \eta^{(k_\star + 1)}_\delta \|_{\infty} 
     &\leq 
    \frac{1}{\delta^{k_\star + 1}} \| \eta^{(k_\star + 1)} \|_\infty.
    &&
	\end{align}
	Both of these constants are independent of $N$, so we have control of the remainder term for free. 
	
	By the same argument, we also get that the fourth moment assumption is automatically satisfied for $k < k_\star$ because 
	\begin{align*}
		&\bigg\| \frac{\dd^{k}}{\dd x^{k}} \bigg( f_{M} \ast \eta_\delta + \sum_{l = 0}^{k^* - 1}  c_l(\fdm) x^l \bigg)  \bigg\|_{\LfourZ}
		\\&= \| f_M \ast \eta^{(k)}_\delta + \sum_{l \geq k}  c_l(\fdm) x^l  \|_{\LfourZ}
		\\&\leq C_{k_\star,k} \bigg( \|f^4_M\|_{L^1(\dd x)}  \| \eta^{(k)}_\delta \|_{\LfourZ} + \sum_{l \geq k}c_l(\fdm)  \|x^l \|_{\LfourZ} \bigg).
	\end{align*}
	This quantity can clearly be bounded independently of $N$, because we assumed control of the moments of $Z$ in Hypothesis~\ref{hyp:smoothZ+BC}. And in particular, the fourth moment condition is satisfied automatically if we assume sufficient integrability on $Z$.

 \jump \textit{Hypothesis~\ref{hyp:nonlin_generalized:coeff}:} This is immediate by Lemma~\ref{lem:phiapprox}. We can take a modify our sequence by possible taking $\delta$ even smaller depending on $M$ such that Hypothesis~\ref{hyp:nonlin_generalized:L4} is satisfied as well.
\end{proof}

\jump

\paragraph{Fundings and Acknowledgments -}
\label{sec:fund_and_acknow}
The work of A.G. and J.K. has received funding from the European Research Council (ERC) under the European Union
Horizon 2020 research and innovation program (grant agreement No. 884584), as well as from the Swiss National Science Foundation grantsgrants OperaGOST (grant number 200390) and SMArtNet (grant number 212049). The authors would like to thank Yunzhen Yao for bringing her work \cite{9517881} to their attention and for the initial numerical investigation of the spectrum for the absolute value non-linearity, and Aleksandr Pak for fruitful discussions at the early stage of this project. 

\bibliographystyle{amsplain}
\providecommand{\bysame}{\leavevmode\hbox to3em{\hrulefill}\thinspace}
\providecommand{\MR}{\relax\ifhmode\unskip\space\fi MR }
\providecommand{\MRhref}[2]{%
	\href{http://www.ams.org/mathscinet-getitem?mr=#1}{#2}
}
\providecommand{\href}[2]{#2}

\end{document}